\setlist[enumerate]{
	topsep = 1ex,
	itemsep=0ex,
	label=\arabic*.,
	ref  =\arabic*.
}
\setlist[itemize]{
	itemsep=0ex,
	topsep = 1ex
}
\newtheorem{theo}{Theorem}[section]
\newtheorem{lemma}[theo]{Lemma}
\newtheorem{corol}[theo]{Corollary}
\newtheorem{propos}[theo]{Proposition}
\newtheorem{fact}[theo]{Fact}
\theoremstyle{definition}
\newtheorem{defin}[theo]{Definition}
\newtheorem{question}[theo]{Question}
\newtheorem{claim}[theo]{Claim}
\theoremstyle{remark}
\newtheorem{obs}[theo]{Remark}
\newcommand{\indep}[2][]{%
	\mathrel{
		\mathop{
			\vcenter{
				\hbox{\oalign{\noalign{\kern-.3ex}\hfil\scalebox{1}[.8]{$\vert$%
							\makebox[0pt]{\raisebox{.9ex}{\kern.3ex\scalebox{.5}{$\mathsf{#2}$}}}}%
						\hfil\cr
						\noalign{\kern-.7ex}
						$\smile$\cr\noalign{\kern-.3ex}}}
			}
		}\displaylimits_{#1}
	}
} 
\newcommand{\nindep}[2][]{%
	\mathrel{
		\mathop{
			\vcenter{
				\hbox{\oalign{%
						\noalign{\kern-.3ex}\hfil\scalebox{1}[.8]{$\vert$%
							\makebox[0pt]{\raisebox{.9ex}{\kern.3ex\scalebox{.5}{$\mathsf{#2}$}}}\makebox[0pt]{\kern-.7ex$\smallsetminus$}}%
						\hfil\cr
						\noalign{\kern-.7ex}
						$\smile$\cr\noalign{\kern-.3ex}}}
			}
		}\displaylimits_{#1}
	}
} 
\newcommand{\dotminus}{\mathbin{\text{\@dotminus}}}
\newcommand{\@dotminus}{%
	\ooalign{\hidewidth\raise.9ex\hbox{.}\hidewidth\cr$\m@th-$\cr}%
}
\let\supp\relax
\DeclareMathOperator{\supp}{supp}
\DeclareRobustCommand\models{\mathrel{\mathrel{|}\joinrel\mkern-.7mu\mathrel{=}}}
\newcommand{\lowersub}[1]{\smash{_{{\footnotesize\textstyle\mathstrut}#1}}}
\newcommand{\sametype}[1][]{\equiv_{#1}}
\DeclareMathOperator{\type}{tp}
\newcommand{\meet}{\mathbin{\wedge}}
\newcommand{\join}{\mathbin{\vee}}
\newcommand{\uint}{\,\mathbb{I}} 
\newcommand{\tsigma}{\textgreek{\textsigma}} 
\newcommand{\ind}[1][]{\indep[#1]{}}
\newcommand{\sigmind}[1][]{\indep[#1]{\ \scalebox{1.4}{$σ$}}}
\newcommand{\gdelta}{\textup{G\textsubscript{\textgreek{d}}}}
\newcommand{\dd}{\,\mathrm{d}}
\newcommand{\disjoint}{\mathbin{\perp}}
\DeclareMathOperator{\acl}{acl}
\DeclareMathOperator{\dcl}{dcl}
\DeclareMathOperator{\bandsymb}{€b}
\newcommand{\band}[1]{\bandsymb§(#1)}
\newcommand{\lBL}{£L_{\mathrm{BL}}}
\newcommand{\alol}{\ensuremath{\mathrm{AL_1L}}}
\newcommand{\alpl}[1][p]{\ensuremath{\mathrm{AL_{\mathit{#1}}L}}}
\newcommand{\rokh}[1]{\ensuremath{{R}_{#1}}}
\newcommand{\seq}[1]{\harpoonhat{#1}}
\newcommand{\class}[1]{§[#1]}
\newcommand{\symii}{II}
\newcommand{\symiif}{II\textsubscript{$1$}}
\newcommand{\symiin}{II\textsubscript{$\infty$}}
\newcommand{\symiii}{III}
\newcommand{\symx}{X}
\newcommand{\kindii}{kind \symii}
\newcommand{\kindiif}{kind \symiif}
\newcommand{\kindiin}{kind \symiin}
\newcommand{\kindiii}{kind \symiii}
\newcommand{\wtopol}{\mathscr{w}} 
\newcommand{\typetop}{\mathscr{l}} 
\numberwithin{equation}{section}
\renewcommand{\seq}[1]{#1}
\def\ps@firstpage{\ps@plain
	\def\@oddfoot{\normalfont\scriptsize \hfil\rule{0pt}{20pt}\thepage\hfil
		\global\topskip\normaltopskip}%
	\let\@evenfoot\@oddfoot
	\def\@oddhead{\@serieslogo\hss}%
	\let\@evenhead\@oddhead 
}
\begin{document}
	
	\title{Banach $L^p$ lattices with an automorphism}
	\author{Antonio~M.~Scielzo}
	\address{Antonio~M.~Scielzo, Université Claude Bernard Lyon 1, Institut Camille Jordan, CNRS UMR 5208, 43 boulevard du 11 novembre 1918, 69622 Villeurbanne Cedex, France}
	\email{scielzo@math.univ-lyon1.fr}
	
	\begin{abstract}
		We study the theory  of Banach $L^p$ lattices with a distinguished automorphism, in the framework of continuous logic. Using a functional version of the Rokhlin lemma, we prove that it admits a model companion, which is stable and has quantifier elimination.
		We show that the types of this theory that are not trivial cannot be isolated. We then use this result to obtain a proof of the absence of comeagre conjugacy classes in $\aut^*(μ)$, the Polish group of non-singular transformations of a standard probability space. 
	\end{abstract}

	\maketitle
	\tableofcontents
	
	\section{Introduction}
	
	The aim of this paper is to study the model-theoretic properties of Banach $L^p$ lattices equipped with an automorphism. We will work in the framework of continuous logic as described in \cite{cont-logic} and \cite{claoc}. 
	Given a measure space $(X,£F,μ)$ we denote by $L^p(X,£F,μ)$ the vector space of $p$-integrable functions modulo equality $μ$-almost everywhere. Together with the norm $\norm{f} = §(\int \abs{f}^p\dd μ){}^{1/p}$, this is a Banach space. Moreover, the order $⩽$ given by pointwise comparison is a lattice order on $L^p(X,£F,μ)$, and is compatible with the structure of normed vector space. The aggregate $(L^p(X,£F,μ), \norm{\,⋅\,}, ⩽)$ is what we call a Banach $L^p$ lattice.

	For any given $0⩽ p < \infty$,	\alpl\ will be the theory of these lattices. 
	Model theoretic properties of \alpl\ were studied by Ben Yaacov, Berenstein, and Henson in \cite{lpIndep}, where they prove that \alpl\ is stable, and give a characterization of non-dividing using concepts from analysis. It was already proved in \cite{ultraprods} that \alpl\ has quantifier elimination, and it follows from Kakutani representation theorem \cite{meyer-banach} that \alpl\ is separably categorical, meaning that there is only one separable Banach $L^p$ lattice up to isomorphism, namely $L^p(\ccint{0,1},£B,μ)$, with $μ$ the Lebesgue measure on $\ccint{0,1}$.
	
	A natural question in model theory is the following. Let $T$ be a theory in a given language $£L$. If we expand $£L$ with a function symbol $σ$ and define $T_{σ}$ to be the theory $T$ together with an axiom stating that $σ$ is an automorphism, does $T_{σ}$ admit a model companion $T_A$? Unfortunately, there is no general criterion for the existence of such a theory $T_A$.
	Here we show that the theory \alpl\ expanded with an automorphism admits a model companion. This is a generalisation of the analogous result in \cite{autProbSpace}, where they prove that the theory of probability algebras with an automorphism has a model companion.
	
	Under certain conditions, the Banach lattice automorphisms of $L^p(X,£F,μ)$ correspond precisely to the non-singular transformations of $(X,£F,μ)$, that is, invertible measurable maps that preserve the family of negligible sets. These transformations generalise the concept of measure-preserving maps and are at the heart of non-singular ergodic theory. A survey on the main results concerning non-singular dynamical systems has been written by Danilenko in \cite{danilenko}.

	This close connection between Banach lattice automorphisms and non-singular transformations allows us to investigate some dynamical properties of $L^p$ lattices as well as their types. The model-theoretic results thus obtained are then used to prove the absence of comeagre conjugacy classes in the Polish group of non-singular transformations of the unit interval, 	
	in what appears to be an interesting connection between the two areas of mathematics.

	This paper is organized in the following way. 
	In \Cref{section: aut}, we introduce the basic notions concerning $L^p$ lattices and their automorphisms. We will recall Kakutani representation theorem, which allows us to identify abstract $L^p$ lattices with concrete structures $L^p(X,£F,μ)$. We then extend the representation to the automorphisms of $L^p$ lattices, in the separable case, showing that they correspond to non-singular transformations of a standard probability space, which are aperiodic precisely when they satisfy the Rokhlin lemma. We then provide a functional version of the Rokhlin lemma.

	In \Cref{section: TA}, we introduce the theory of atomless $L^p$ lattices with a distinguished automorphism and show that it admits a model companion $T_A$, which answers a question raised in \cite{autProbSpace}. The main tool here will be  the functional Rokhlin lemma.  We then use a result by Lascar \cite{lascar} to show that $T_A$ has quantifier elimination, and follow the same idea as in \cite{chatpill} to characterise the independence in $T_A$ and prove that $T_A$ is stable.
	
	In \Cref{section: types}, we recall the definition of the logic topology and of the metric for the space of types $S_n(T_A)$. We then prove that in $S_1(T_A)$ there are no non-trivial isolated types, i.e., points in the space where the two topologies coincide. We then introduce a notion of ergodicity for $L^p$ lattices that corresponds to the measure theoretic one in the separable case. In ergodic theory, non-singular ergodic transformations can be classified based on the existence of finite or \tsigma-finite equivalent invariant measures. We present here an analogous classification of ergodic lattices based on the types they realise.
	
	Finally, in \Cref{section: conjugacy}, we present an application of the absence of non-trivial isolated $1$-types of $T_A$. We recall the definition of the weak topology for the group $\aut^*(μ)$ of non-singular transformations of a standard probability space, which makes it a Polish group. We then consider a separable $L^p$ lattice $E$ and identify the automorphisms $σ$ that make $(E,σ)$ a model of $T_A$ with the aperiodic transformations in $\aut^*(μ)$, which form a comeagre subset of the group. We then prove that if an automorphism $σ$ has a comeagre conjugacy class, then $(E,σ)$ omits all non-isolated types, which is impossible.

	\section{Automorphisms of $L^p$ lattices} \label{section: aut}

	We start with some basic definitions following the presentation and notation from \cite{meyer-banach}.
	We say that a real Banach space $E$ together with a lattice order $⩽$ is a \emph{Banach lattice} if for all $u,v,w ∈ E$, we have
	\begin{itemize}[nosep, label={---}]
		\item (translation invariance)\; $u ⩽ v$ implies $u + w ⩽ v + w$,
		\item (positive homogeneity)\; for any scalar $0 ⩽ r$, if $u ⩽ v$ then $ru ⩽ rv$,
		\item (monotonicity)\; $\abs{u} ⩽ \abs v$ implies $\norm u ⩽ \norm v$,
	\end{itemize}
	where $\abs{x} = \sup §{x,-x} = x \join (-x)$. Two elements $u$ and $v$ in $E$ are said to be \emph{disjoint} if $\abs{u} \meet \abs v = 0$, and given $1⩽p<\infty$, a Banach lattice $(E,⩽)$ is called an \emph{$L^p$ lattice} if $\norm{u+v}^p = \norm{u}^p + \norm{v}^p$ whenever $u$ and $v$ are disjoint. A non-zero element of a Banach lattice that cannot be written as the sum of two other disjoint non-zero elements is called an \emph{atom}. If the lattice has no atoms, we say that it is \emph{atomless}. In the following, the lattices we deal with will always be atomless, unless otherwise specified.
	
	If $(X,£F,μ)$ is a measure space, then the space $L^p(X,£F,μ)$ of $p$-integrable functions modulo equality $μ$-almost everywhere, together with the order given by pointwise comparison, is an $L^p$ lattice. Kakutani representation theorem \cite[Theorem~2.7.1]{meyer-banach} states that every abstract $L^p$ lattice is in fact the concrete $L^p$ lattice of a measure space.
	In the separable atomless case, Kakutani representation theorem takes a more precise form.
	\begin{fact}[{\cite[Theorem~2.7.3]{meyer-banach}}]
		If $E$ is a separable atomless $L^p$ lattice, then it is isomorphic to $L^p(\uint,£B,λ)$, where $(\uint,£B,λ)$ is the Lebesgue measure space of the unit interval $\uint = \ccint{0,1}$.
	\end{fact}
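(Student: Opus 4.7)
The plan is to reduce the statement to a classical classification theorem for measure algebras. First, I would apply the general Kakutani representation theorem just recalled to obtain some measure space $(X,\mathcal{F},\mu)$ together with a Banach lattice isomorphism $E \cong L^p(X,\mathcal{F},\mu)$; the task then is to show that the underlying measure space can be taken to be $(\uint,\mathcal{B},\lambda)$.

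Next, I would pass to the measure algebra $\mathcal{A} = \mathcal{F}/\mathcal{N}_{\mu}$ modulo the $\sigma$-ideal of null sets, and show that $\mathcal{A}$ is atomless, has finite total mass, and is separable in the measure metric. Atomlessness of $\mathcal{A}$ is immediate: an atom $A$ of positive finite measure would give the atom $\charfun{A}$ of $E$, contradicting the hypothesis. To reach a finite total mass, I would use separability of $E$: from a countable dense sequence $(f_n)_n$ form the strictly positive element $e = \sum_{n} 2^{-n} \abs{f_n}/(1+\norm{f_n})$, and observe that the band generated by $e$ must be all of $E$, because any nonzero element of its complementary band would be disjoint from every $f_n$, contradicting density. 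Changing density by $\abs{e}^p$ then produces an equivalent finite measure $\mu_e$ supported on the support of $e$, together with an isometric Banach lattice isomorphism $L^p(X,\mathcal{F},\mu) \cong L^p(X,\mathcal{F},\mu_e)$. Approximating each $f_n$ by simple functions exhibits a countable subfamily of $\mathcal{A}$ that is dense in the metric $(A,B) \mapsto \mu_e(A \symdiff B)$, so $\mathcal{A}$ is separable.

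Having reduced to the case of a separable, atomless probability measure algebra, I would invoke the Carathéodory isomorphism theorem: any such algebra is measure-algebra isomorphic to that of $(\uint,\mathcal{B},\lambda)$. This Boolean isomorphism lifts canonically to an isometric lattice isomorphism of the associated $L^p$ spaces: the simple functions are canonically defined in terms of measure-algebra elements, their $L^p$-norms depend only on the measures of their supports, and they form a dense sublattice. Putting everything together gives $E \cong L^p(\uint,\mathcal{B},\lambda)$.

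I expect the main technical obstacle to be the $\sigma$-finiteness reduction in the middle step: one must verify carefully that the density change by $\abs{e}^p$ really yields a Banach lattice isomorphism and that the resulting measure algebra is actually separable, since the passage from a norm-separable $L^p$ space to a separable measure algebra need not be automatic in the non-$\sigma$-finite setting produced by the abstract Kakutani theorem. Once this reduction is in place, the remaining ingredients — Carathéodory's theorem and the canonical identification of $L^p$ with a completion of the simple functions — are classical and can be quoted directly.
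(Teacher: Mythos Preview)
The paper does not give a proof of this statement: it is recorded as a Fact with a citation to Meyer--Nieberg and no argument is supplied. So there is nothing to compare your approach against.

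That said, your sketch is a correct and standard route. A couple of remarks. First, your concern about $\sigma$-finiteness is legitimate but already handled by your own construction: once you show that the band generated by $e$ is all of $E$, every $L^p$ function is supported on $\supp e$, so you may simply restrict the underlying set to $\supp e$; there the density change $d\mu_e = \abs{e}^p\,d\mu$ gives a genuine finite measure, and $f \mapsto f/e$ is the desired isometric lattice isomorphism $L^p(\mu)\to L^p(\mu_e)$. Second, separability of the measure algebra follows cleanly from separability of $L^p(\mu_e)$ via the embedding $A \mapsto \chi_A$, since $\norm{\chi_A - \chi_B}_p^p = \mu_e(A\symdiff B)$; you do not need to go through simple-function approximations of the $f_n$ explicitly. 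The final two steps (Carath\'eodory/Maharam classification of separable atomless probability algebras, and the lift of a measure-algebra isomorphism to an isometric lattice isomorphism of the $L^p$ spaces) are indeed classical and can be quoted.
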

	
	Let $E$ be a Banach lattice. A vector subspace $B$ of $E$ is called a \emph{band} if 
	\begin{itemize}[nosep, label={---}]
		\item for all $u∈E$ and $v ∈ B$, whenever $\abs{u} ⩽ \abs{v}$, we have $u∈B$,
		\item for every subset $A⊆B$ that has a supremum in $E$, we have $\sup(A) ∈ B$.
	\end{itemize}
	If $A$ is a subset of $E$, we will denote by $\band{A}$ the smallest band containing $A$. If $A = §{u}$ for some $u∈E$, we will write $\band{u}$ for the band generated by $§{u}$. A band that is generated by a single element is called a \emph{principal band}. 
	
	Given a set $A⊆E$, the \emph{disjoint complement} $A^{\perp}$ of $A$ is the set of those $u∈E$ that are disjoint from any element of $A$. It turns out that the band generated by $A$ is precisely the double complement $A^{\perp\perp} = (A^{\perp})^{\perp}$ of $A$.
	By \cite[Theorem~1.2.9]{meyer-banach}, if $E$ is an $L^p$ lattice and $A⊆E$, we can decompose $E$ as the direct sum $E = \band{A} \oplus A^{\perp}$. Given $u∈E$ and $B⊆E$ a band, we will denote by $u \restr B$ the projection of $u$ onto $B$ along $B^{\perp}$. 
	
	Given a Banach lattices $E$, a \emph{Banach lattice automorphism} of $E$ is an isometric linear automorphism preserving the lattice order, or equivalently the lattice modulus $\abs{\,⋅\,}$.

	We define the \emph{restriction} of a positive element to another by
	\begin{equation*}
		x \restr y ≔ \lim_{n} 2^n §(\frac{x}{2^n} \meet y).
	\end{equation*}
	For the general case, just set $x \restr y ≔ x^+ \restr \abs{y} - x^- \restr \abs{y}$. This coincides with the the projection of $x$ onto the band generated by $y$ (See \cite[Prop.~1.2.11]{meyer-banach}). In particular, in a concrete lattice $L^p(X,£F,μ)$ we have $f \restr g = f \restr\relax {\supp g}$ for all positive $f$ and $g$. In fact,
	\begin{align*}
		\norm{ f ⋅ χ_{\supp g} - 2^n §(\frac{f}{2^n} \meet g) } 
		&= \norm{ (f-2^n g)⋅ χ_{ §{x: 0<g(x)<f(x)/2^n} } }\\
		&⩽ \norm{ f ⋅ χ_{ §{x: 0<g(x)<f(x)/2^n} } } \to 0
	\end{align*}
	as $n$ goes to infinity, since the sequence of sets $§{x: 0<g(x)<f(x)/2^n}$ is decreasing with empty intersection. It is easy to see that if $σ$ is an automorphism of vector lattices, then $σ(x \restr y) = σx \restr σy$.

	\subsection{Representation of automorphisms of $L^p$ lattices}
	
	If $σ\colon (X,£F,μ) \to (X',£F')$ is a measurable map, we denote by $σ_*μ$ the \emph{pushforward measure} defined by $σ_*μ(A) = μ(σ^{-1}A)$ for all $A∈£F'$. As usual, if $ν$ is another measure on $(X,£F)$, we say that $μ$ and $ν$ are equivalent if, for all $A∈ £F$, $μ(A) = 0$ if and only if $ν(A)= 0$.

	\begin{defin}
		Let $(X,£F, μ)$ be a measure space.  A map $σ\colon X \to X$ is said to be a \emph{non-singular transformation} (or measure-class-preserving transformation) if it is an invertible measurable map such that $μ$ and $σ_*μ$ are equivalent.
	\end{defin}
	
	Non-singular transformations on $(X,£F, μ)$ form a group, which we will denote by $\aut^*(X,£F,μ)$, or simply $\aut^*(μ)$, when the measure space in question is clear from the context.

	Suppose now that $(X,£F, μ)$ is a \tsigma-finite measure space and let $σ∈ \aut^*(μ)$. Then the Radon--Nikodym derivative of $\dd σ_*μ$ with respect to $\dd μ$ exists, so for each $u∈ L^p(X,£F,μ)$, we can define a measurable function $\widetilde{σ}(u)$ by
	\begin{equation}\label{action def}
		\widetilde{σ}(u) = §( \frac{\dd σ_*μ}{\dd μ} )^{\frac 1 p} ⋅ §(u \comp σ^{-1}).
	\end{equation}
	This is in fact a $p$-integrable function, since
	\begin{align*}
		\norm{\widetilde{σ}(u)}^p &= \int §( \frac{\dd σ_*λ}{\dd λ} ) ⋅ §(\abs{u}^p \comp σ^{-1}) \dd λ \\
		&= \int §(\abs{u}^p \comp σ^{-1}) \dd σ_*λ = \int \abs{u}^p \dd λ = \norm {u}^p.
	\end{align*}
	This also shows that $\widetilde{σ}$ is in fact an isometry of $L^p(μ)$, and it is easy to see that $\widetilde{σ}\colon L^p(μ) \to L^p(μ)$ is a vector lattice automorphism. This means that $\widetilde{σ}$ belongs to $\aut(L^p(μ))$, the group of Banach lattice automorphisms, which we will simply call \emph{automorphisms} in the following.
	
	Straightforward calculations show that the application $σ \mapsto \widetilde{σ}$ defines an action of $\aut^*(μ)$ on $L^p(μ)$ by automorphisms. Moreover, in the particular case where the measure space is the unit interval $\uint$ with its Lebesgue probability measure $λ$, every automorphism of $L^p(λ)$ is induced by a non-singular transformation of $\uint$, and the correspondence is a group isomorphism.
	
	\begin{fact}[{\cite[Thorem~2.4]{metricImaginaries}}]\label{representation aut separable case}
		The map $σ\mapsto \widetilde{σ}$ defined above is an isomorphism from $\aut^*(λ)$ to $\aut(L^p(λ))$. Its inverse is given by
		\begin{equation*}
			\overline{τ}(r) = \inf §{q ∈ \uint ∩ \qq : r ∈ \supp τ(χ_{\ccint{0,q}})} ,
		\end{equation*}
		for all $τ ∈ \aut(L^p(λ))$ and all $r∈ \uint$.
	\end{fact}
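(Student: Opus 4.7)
The plan is to establish injectivity directly and then to prove surjectivity by verifying that the formula in the statement produces a non-singular transformation $\overline{\tau}$ for every $\tau \in \aut(L^p(\lambda))$.

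For injectivity, I would observe that by the definition of $\widetilde{\sigma}$ one has $\widetilde{\sigma}(\chi_{\ccint{0,q}})(r) = c(r)\, \chi_{\ccint{0,q}}(\sigma^{-1}(r))$ with $c = (\dd\sigma_*\lambda/\dd\lambda)^{1/p}$, positive almost everywhere. Hence the support of $\widetilde{\sigma}(\chi_{\ccint{0,q}})$ coincides with $\sigma(\ccint{0,q})$ modulo null sets, and the countable family $\{\sigma(\ccint{0,q}) : q \in \qq \cap \uint\}$ determines $\sigma$ almost everywhere.

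For surjectivity, fix $\tau \in \aut(L^p(\lambda))$. The lattice-theoretic properties of $\tau$---preservation of disjointness, of suprema, and of the zero element---imply that the assignment $A \mapsto \supp \tau(\chi_A)$ descends to a Boolean-algebra automorphism of the measure algebra $\mathcal{B}(\uint)/\mathcal{N}$ preserving measure-equivalence, while the isometric condition $\norm{\tau(\chi_A)}^p = \lambda(A)$ controls how measure is transported. Setting $B_s = \supp \tau(\chi_{\ccint{0,s}})$, monotonicity in $s$ together with density of $\qq$ in $\uint$ gives
\begin{equation*}
\{r \in \uint : \overline{\tau}(r) \leq s\} = B_s \quad \text{up to a null set},
\end{equation*}
which proves simultaneously that $\overline{\tau}$ is defined almost everywhere and Borel measurable. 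Applying the same construction to $\tau^{-1}$ produces an almost-everywhere two-sided inverse, so $\overline{\tau}$ is a non-singular transformation.

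I would conclude by verifying that the two constructions are mutually inverse through direct computation on the dense family $\{\chi_{\ccint{0,q}} : q \in \qq \cap \uint\}$: on each such generator the claimed identities reduce to the defining formula, and they extend to all of $L^p(\lambda)$ by continuity. The main technical obstacle I anticipate is ensuring that the countable monotone family $\{B_q\}_{q \in \qq \cap \uint}$ can be represented by Borel sets in such a way that $B_q \subseteq B_{q'}$ holds \emph{pointwise} whenever $q \leq q'$, so that the formula produces a genuine pointwise map and not merely an equivalence class modulo null sets. On the standard Borel space $\uint$ this is handled routinely by discarding a single null set on which these monotonicity inclusions, or the non-vanishing of the relevant Radon--Nikodym factors, fail.
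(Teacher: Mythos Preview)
The paper does not prove this statement: it is recorded as a \emph{Fact} and attributed to \cite[Theorem~2.4]{metricImaginaries}, with no argument given. There is therefore no ``paper's own proof'' against which to compare your proposal.

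That said, your outline is essentially the standard route and is largely sound. The one place where you are too quick is the final verification that $\widetilde{\overline{\tau}} = \tau$. Your construction of $\overline{\tau}$ tracks only the \emph{supports} $B_q = \supp \tau(\chi_{\ccint{0,q}})$, so checking the identity ``on the generators $\chi_{\ccint{0,q}}$'' cannot reduce to the defining formula alone: you must also match the \emph{values}. Concretely, set $g = \tau(\chi_{\uint})$; order preservation and additivity on disjoint pieces force $\tau(\chi_A) = g \cdot \chi_{\overline{\tau}(A)}$ for every measurable $A$, and then the isometry condition $\int_{\overline{\tau}(A)} g^p \dd\lambda = \lambda(A) = \overline{\tau}_*\lambda(\overline{\tau}(A))$ identifies $g^p$ with $\dd\overline{\tau}_*\lambda/\dd\lambda$. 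Only after this step does density of simple functions give $\widetilde{\overline{\tau}} = \tau$ on all of $L^p(\lambda)$. Without it, you have shown that $\widetilde{\overline{\tau}}$ and $\tau$ agree on supports of characteristic functions, which is not enough.
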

	
	In general however, not every automorphism of $L^p(μ)$ is induced by a non-singular transformation of $(X,£F,μ)$.

	\subsection{Rokhlin lemma for Banach lattice automorphisms}\label{rokhlin section}
	
	Let $(X,£B,μ)$ a standard Borel space with a \tsigma-finite measure.
	
	\begin{defin}
		A non-singular transformation $σ$ of $(X,£B,μ)$ is said to be \emph{aperiodic} if, for every integer $n>0$, the set $ §{x∈X : σ^n(x) = x }$ of $n$-periodic points is negligible.
	\end{defin}

	A basic example of an aperiodic transformation is the \emph{translation} $τ_{r}$ of step $r$, defined on the real line $(\rr, £B, λ)$ with its Lebesgue measure by
	\begin{equation}\label{def translation}
		τ_{r} (x) = x+r,
	\end{equation} 
	for any $r ∈ \rr$. 
	Another example is the rotation $ρ_{α}$ of angle $2πα$ defined on the unit interval $(\uint, £B, λ)$ with its Lebesgue measure by
	\begin{equation}\label{def rotation}
		ρ_{α}(x) = \begin{cases}
			x + α & \text{if } x+α⩽1\\
			x + α -1 & \text{otherwise},
		\end{cases}
	\end{equation}
	where $0< α< 1$ is an irrational number.
	
	Let $σ$ be a non-singular transformation of $(X,£B,μ)$. We recall now a classical statement of ergodic theory that we will reproduce in the context of $L^p$ lattices.
	
	\begin{fact}[Rokhlin lemma for non-singular transformation {\cite[Lemma~7.9]{friedman}}]
		If $σ$ is aperiodic, then for any $n>1$ and $ε>0$, there exists $A∈£B$ such that $σ^i A$, for $0⩽ i < n$ are pairwise disjoint and
		\begin{equation*}
			μ§(X \setdiff \bigcup_{i<n} σ^i A)[Big] < ε.
		\end{equation*}
	\end{fact}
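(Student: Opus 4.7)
The plan is to run the classical Kakutani skyscraper construction and then refine each resulting column into sub-towers of height $n$. First, I would reduce to the case $\mu(X)=1$: since $\mu$ is $\sigma$-finite there is an equivalent probability measure on $(X,\mathcal{B})$, and because the statement only involves the class of null sets, and both non-singularity and aperiodicity of $\sigma$ are class invariants, one may assume from the start that $\mu$ is a probability measure. Using aperiodicity together with the standard Borel structure of $X$ (and, on the Hopf dissipative part, a separate direct construction on wandering sets), I would then produce, for any prescribed $\delta>0$, a measurable \emph{sweep-out} set $F\subseteq X$ with $\mu(F)<\delta$ such that almost every orbit enters $F$ in both the past and the future. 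For $x\in F$ the first-return time $r(x)=\min\{k\ge 1:\sigma^kx\in F\}$ is then defined almost everywhere; setting $F_k=\{x\in F:r(x)=k\}$, the levels $\sigma^iF_k$ for $0\le i<k$ are pairwise disjoint, and the whole family of columns partitions $X$ modulo null sets.

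Next, I would prune each tall column. For each $k\ge n$, write $k=qn+s$ with $0\le s<n$, and let $A_k=\bigcup_{j=0}^{q-1}\sigma^{jn}F_k$; the translates $\sigma^i A_k$, $0\le i<n$, are pairwise disjoint and together cover all but the top $s$ levels of the $k$-column. Taking $A=\bigsqcup_{k\ge n}A_k$ preserves disjointness of $\sigma^i A$, $0\le i<n$, and the remainder $X\setminus\bigcup_{i<n}\sigma^i A$ splits into (a) the short columns, entirely contained in $\bigcup_{i<n}\sigma^i F$, and (b) the topmost at most $n-1$ levels of each tall column.

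The delicate part, which I expect to be the main obstacle, is bounding the measure of this waste. Part~(a) is at most $\sum_{i<n}\mu(\sigma^iF)$, which tends to $0$ as $\delta\to 0$ by absolute continuity of each pushforward $(\sigma^i)_*\mu$ with respect to $\mu$. Part~(b) is harder: since $\sigma$ preserves only the measure class and not the measure itself, the $k$ levels inside a single column can carry wildly different masses, so the naive estimate \enquote{top $s$ out of $k$ levels is a fraction $s/k$ of the column} fails outright. I would overcome this by first constructing an auxiliary Kakutani tower of very large height over some base $F_0$, and then taking $F$ to be a thin horizontal slice within that tower, so that the combined mass of the top strata across all columns is controlled directly in terms of $\mu(F)$ rather than by the discrete proportions $s/k$; choosing $\delta$ small enough then forces the total waste below $\varepsilon$.
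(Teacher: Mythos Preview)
The paper does not prove this statement: it is recorded as a \emph{Fact} with a citation to Friedman, so there is no in-paper argument to compare against. Your sketch follows a standard line (reduction to a probability measure, sweep-out set, Kakutani skyscraper, blocking columns into sub-towers of height $n$), and you correctly isolate the genuine non-singular difficulty, namely that levels within a single column need not carry comparable mass.

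However, your proposed remedy in the final paragraph is both vague and unnecessary. The waste in part~(b) can be bounded by exactly the same mechanism you already use for part~(a). If $x\in F_k$ then $\sigma^k x\in F$, so for $1\le j\le s<n$ the level $\sigma^{k-j}F_k$ is contained in $\sigma^{-j}F$. Hence the union of the top $s$ levels over all tall columns sits inside $\bigcup_{j=1}^{n-1}\sigma^{-j}F$. Since $\sigma$ is non-singular, each pushforward $(\sigma^{j})_*\mu$ is absolutely continuous with respect to $\mu$, so $\mu(\sigma^{-j}F)\to 0$ as $\mu(F)\to 0$, for every fixed $j$ with $|j|<n$. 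Combining (a) and (b), the total remainder is contained in $\bigcup_{|j|<n}\sigma^{j}F$, and choosing $\delta$ so small that $\sum_{|j|<n}\mu(\sigma^{j}F)<\varepsilon$ finishes the proof. The auxiliary-tower/``thin horizontal slice'' device you describe is not needed, and as written it is not clear what set you intend $F$ to be or why that choice would control the top strata.
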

	
	The following is more basic result concerning aperiodic transformations, which can be obtained by repeated applications of \cite[Lemma~7.1]{friedman}.
	\begin{lemma}\label{basic lemma}
		For every $A∈£B$ of positive measure and every integer $N>0$, there exists $B⊆A$ of positive measure such that $σ^k(B) ∩ B = ∅$ for all $1⩽k⩽N$.
	\end{lemma}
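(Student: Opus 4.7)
The plan is to proceed by induction on $N$. The base case $N = 1$ amounts to finding, for any aperiodic non-singular transformation and any set of positive measure $A$, a positive-measure $B \subseteq A$ with $\sigma(B) \cap B = \emptyset$; this is the content of \cite[Lemma~7.1]{friedman}, which is precisely the single-step fact that the statement is said to iterate.

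For the inductive step, I would assume the statement for $N$ and prove it for $N+1$. Given $A$ of positive measure, the inductive hypothesis applied to $\sigma$ and $A$ yields some $B' \subseteq A$ of positive measure with $\sigma^k(B') \cap B' = \emptyset$ for all $1 \le k \le N$. Then, applying the base case to the transformation $\sigma^{N+1}$ and to the set $B'$, I obtain $B \subseteq B'$ of positive measure with $\sigma^{N+1}(B) \cap B = \emptyset$. Since $B \subseteq B'$, the disjointness conditions for $1 \le k \le N$ are automatically inherited, and so $B$ satisfies all the required properties.

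The one point deserving verification, and the closest thing to an obstacle in this argument, is that $\sigma^{N+1}$ is itself aperiodic, so that the base case can be legitimately applied at that stage. This is immediate: the set of $m$-periodic points of $\sigma^{N+1}$ equals the set $\{x : \sigma^{m(N+1)}(x) = x\}$ of $m(N+1)$-periodic points of $\sigma$, which is negligible by hypothesis. Hence the lemma reduces entirely to iterating Friedman's single-step result $N$ times, with no genuinely hard step involved.
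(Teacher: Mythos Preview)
Your argument is correct and is exactly the ``repeated applications of \cite[Lemma~7.1]{friedman}'' that the paper invokes without further detail; you have simply spelled out the induction and correctly handled the only point requiring care, namely that $\sigma^{N+1}$ is again aperiodic so that the base case applies to it.
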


	It turns out that the Rokhlin lemma characterises aperiodicity of non-singular transformations, as we can see in the following proposition.
	
	\begin{propos}
		Suppose that $σ$ satisfies the Rokhlin lemma, i.e., for all $n>0$ and $ε>0$, there is a measurable set $A$ such that the sets $A,σA,…,σ^{n-1}A$ are pairwise disjoint and together cover all of $X$ except at most a portion of measure $ε$. Then $σ$ is aperiodic.
	\end{propos}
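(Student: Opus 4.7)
The plan is to argue by contraposition: I assume $σ$ is not aperiodic, so that some integer $n > 0$ makes the set $P_n = §{x ∈ X : σ^n(x) = x}$ have positive measure. Since $μ$ is \tsigma-finite, I can choose a measurable subset $Q ⊆ P_n$ with $0 < μ(Q) < \infty$; set $c = μ(Q)$. The strategy is then to show that the Rokhlin property fails for the parameters $n+1$ and $ε = c/2$.

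The first step would be to note that $P_n$ is $σ$-invariant: if $σ^n x = x$, then $σ^n(σ x) = σ(σ^n x) = σ x$, so $σ(P_n) ⊆ P_n$, and invertibility of $σ$ gives equality. I would then apply the Rokhlin hypothesis for $n+1$ and $ε$ to obtain a measurable set $A$ with $A, σA, …, σ^n A$ pairwise disjoint (modulo null sets) and
\begin{equation*}
	μ§(X \setdiff \bigcup_{i=0}^n σ^i A)[Big] < c/2.
\end{equation*}

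Next, I would observe that any point $x ∈ A ∩ P_n$ satisfies $σ^n x = x$, hence lies in both $A$ and $σ^n A$; disjointness then forces $μ(A ∩ P_n) = 0$. Combining $σ$-invariance of $P_n$ with non-singularity of $σ$, each intersection $σ^i A ∩ P_n = σ^i(A ∩ P_n)$ would be negligible, so the union $\bigcup_{i=0}^n σ^i A$ is disjoint from $P_n$ modulo null sets, giving
\begin{equation*}
	μ§(X \setdiff \bigcup_{i=0}^n σ^i A)[Big] ⩾ μ(Q) = c,
\end{equation*}
which contradicts the Rokhlin bound of $c/2$. I expect the only delicate point to be the null-set bookkeeping in the identity $σ^i A ∩ P_n = σ^i(A ∩ P_n)$, which relies on the $σ$-invariance of $P_n$; the remaining arguments should be routine.
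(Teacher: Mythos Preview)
Your argument is correct. Both proofs proceed by contraposition from a set of $n$-periodic points of positive measure, but the mechanisms differ. The paper applies the Rokhlin hypothesis with tower height $2m$ (where $m$ is the period), argues that the tower must meet the periodic set $Y$ on measure grounds, picks a point $σ^k x ∈ Y$ in some level, and then uses the period to jump to a different level $σ^{k\pm m}A$, contradicting disjointness. You instead take tower height $n+1$ and observe globally that the base $A$ can contain no $n$-periodic point at all (since any such point would lie in $A ∩ σ^n A$); $σ$-invariance of $P_n$ then propagates this to every level, so the whole tower misses $P_n$ and the remainder is too large. Your route is somewhat more direct and set-theoretic, avoiding the pointwise tracking in the paper; it also handles the $σ$-finite case cleanly by passing to a subset $Q$ of finite measure, whereas the paper's choice $ε = μY/2$ tacitly needs $μY < \infty$. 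One minor remark: since the hypothesis gives genuine (not almost-everywhere) disjointness of the $σ^i A$, your inclusion $A ∩ P_n ⊆ A ∩ σ^n A$ yields $A ∩ P_n = ∅$ outright, and the identity $σ^i A ∩ P_n = σ^i(A ∩ P_n)$ is purely set-theoretic once $σ(P_n) = P_n$ is known; no null-set bookkeeping or appeal to non-singularity is actually required.
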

	\begin{proof}
		Suppose it is not. Then there is some $m∈\nn$ such that $Y = §{x∈X : σ^m x = x}$ has positive measure. Choose $n = 2m$ and $ε = μY/2$ and find a set $A$ satisfying Rokhlin's condition. As $μY > 2ε$, there must be a $k<n$ such that $σ^k A$ intersects $Y$, so there is $x ∈ A$ such that $σ^k x ∈ Y$.
		Let $m'$ be either $m$ or $-m$ according to which one makes the inequality $0⩽ m'+k < n$ hold. Now, $σ^kA$ and $σ^{m'+k}A$ are disjoint, so $σ^k x ≠ σ^{m'+k} x$, but $σ^{m'} (σ^k x) = σ^{k} x$ by choice of $Y$, a contradiction.
	\end{proof}
	
	The following is a version of the Rokhlin lemma for $L^p$ lattices, which will be our main tool in proving the existence of the model companion of $T_{σ}$. 
	We write here a direct proof, due to Itaï Ben Yaacov. Here we are assuming that the lattice $L^p(μ) = L^p(\uint,£B,μ)$ is equipped with a an automorphism $σ$ induced by an aperiodic non-singular transformation of $\uint$, which is denoted by the same letter $σ$.
	
	\begin{lemma}[Functional Rokhlin lemma]\label{functional rokh}
		Let $f$ be a positive function in $L^p(μ)$. For every $n∈\zz_{>0}$ and every $ε > 0$,
		there exists a positive $g ∈ L^p(μ)$ of norm no greater than $\norm{f}$ and a positive  $h$ of norm at most $ε$, such that $σ^k(g)$ are pairwise disjoint for $0 ⩽ k < n$ and $f ⩽ \sum_{k<n} σ^k(g) + h$.
	\end{lemma}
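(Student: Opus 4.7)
The plan is to reduce the statement to the classical Rokhlin lemma, available since $\sigma$ is aperiodic, and then lift the resulting set-theoretic tower to the lattice level by a judicious choice of $g$ on the base of the tower. First, by absolute continuity of the integral of $\abs{f}^p$, one picks $\delta > 0$ so that $\mu(E) < \delta$ implies $\norm{f\chi_E} < \varepsilon$. Then one applies the classical Rokhlin lemma to $\sigma$ with parameters $n$ and $\delta$ to obtain a Borel set $A \subseteq \mathbb{I}$ such that $A,\sigma A,\dots,\sigma^{n-1}A$ are pairwise disjoint and $\mu(E) < \delta$ for $E := \mathbb{I} \setminus \bigcup_{k<n} \sigma^k A$.

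The key move is to take the finite lattice supremum $F := \bigvee_{k=0}^{n-1} \sigma^{-k}(f)$ and set $g := F \restr \chi_A$ together with $h := f\chi_E$, so that $\norm h < \varepsilon$ by the choice of $\delta$. Since $g$ is supported on the band generated by $\chi_A$ and $\sigma$ is a Banach lattice automorphism, $\sigma^k g$ is supported on the band generated by $\chi_{\sigma^k A}$, and the disjointness of the iterates $\sigma^k A$ yields the required disjointness of the $\sigma^k g$. Moreover, $F \geq \sigma^{-k} f$ for every $k$, hence $\sigma^k F \geq f$, and therefore $\sigma^k g = (\sigma^k F)\restr \chi_{\sigma^k A} \geq f \chi_{\sigma^k A}$; summing over $k$ and adding $h$ gives $f \leq \sum_{k<n} \sigma^k g + h$.

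The crux of the proof, and the reason for taking the supremum $F$ over backward iterates rather than any single one of the $\sigma^{-k} f$, is the norm bound $\norm g \leq \norm f$. Using the elementary inequality $(\max_k a_k)^p \leq \sum_k a_k^p$ for positive reals, one gets
\begin{equation*}
\norm g^p = \int_A F^p \dd\mu \leq \sum_{k<n} \int_A (\sigma^{-k} f)^p \dd\mu = \sum_{k<n} \norm{(\sigma^{-k} f)\chi_A}^p.
\end{equation*}
Since $\sigma^k$ is a lattice isometry, it sends $(\sigma^{-k} f) \restr \chi_A$ to $f \restr \chi_{\sigma^k A}$, so each term equals $\norm{f \chi_{\sigma^k A}}^p$; the pieces $f\chi_{\sigma^k A}$ are pairwise disjoint, so the sum of their $p$-th powers equals $\norm{f \chi_{\bigcup_{k<n}\sigma^k A}}^p \leq \norm f^p$. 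The only real obstacle is conceptual, namely finding the correct formula for $g$; once $F$ is identified, every subsequent verification is routine.
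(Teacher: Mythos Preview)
Your proof is correct and takes a genuinely different, more direct route than the paper's.

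The paper does not use the classical Rokhlin lemma with a tower of height exactly $n$. Instead it first proves a strengthened claim (every set of positive measure contains a subset $B$ whose first $N$ iterates are disjoint \emph{and} whose iterates from $-N$ to $N$ cover the original set), applies it with a large $N \geq n\norm{f}/\varepsilon$, and partitions the space into columns of varying heights between $N+1$ and $2N+1$. On each column it decomposes $f$ additively into the pieces $f_k$ supported on each level, groups them into blocks of $n$, and uses a pigeonhole argument to find a block of $p<n$ levels carrying at most $\norm{f}/(q+1)$ of the norm; that block becomes the contribution to $h$, and $g$ is built by pulling back the remaining pieces. Everything is done with restrictions and sums, never with a lattice supremum.

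Your argument is shorter and conceptually cleaner: you take the tower of height exactly $n$, let the leftover set carry $h$ via absolute continuity, and obtain $g$ in one stroke as the supremum $\bigvee_{k<n}\sigma^{-k}f$ restricted to the base. The norm bound on $g$ then falls out of the pointwise inequality $(\max_k a_k)^p \leq \sum_k a_k^p$ together with the isometry of $\sigma$ and the disjointness of the levels, which is an elegant trick. What the paper's longer construction buys is a $g$ assembled purely from restrictions of $f$ (so in particular $g \leq f$ in the band of $g$), and an argument that never leaves the additive/restriction fragment of the lattice language; your $g$ may be pointwise larger than $f$ on $A$, but this is irrelevant for the stated conclusion. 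For the purposes of the lemma as stated, your approach is both valid and preferable.
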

	\begin{proof}
		We will use the following claim, which is a generalisation of \Cref{basic lemma}.
		\begin{claim}
			For every $A⊆ \uint$ of positive measure and positive integer $N$, there exists $B⊆A$ of positive measure such that $σ^k(B) ∩ B = ∅$ for all $1⩽k⩽N$ and in addition $A ⊆ \bigcup_{-N⩽ k ⩽ N} σ^k(B)$ up to negligible sets.
		\end{claim}
		\begin{proof}\renewcommand*{\qedsymbol}{$\triangle$}
			Construct an increasing sequence $(B_{α})_{α}$ of subsets of $A$ such that $σ^k(B_{α}) ∩ B_{α} = ∅$ for all $1⩽k⩽N$. Start with $B_0 = ∅$. For a limit ordinal $α < ω_1$, let $B_{α} = \bigcup_{β <α} B_{β}$. Given $B_{α}$, let
			\begin{equation*}
				A_{α} = A \setdiff \bigcup_{-N⩽k⩽N} σ^k(B_{α}).
			\end{equation*}
			If $A_{α}$ is negligible, then we may stop and choose $B = B_{α}$. Otherwise, apply \Cref{basic lemma} to $A_{α}$ to obtain a set $C_{α} ⊆ A_{α}$ disjoint from its first $N$ images under $σ$, and set $B_{α+1} = B_{α} ∪ C_{α}$. As the $C_{α}$ are non-negligible, the construction must stop as some countable ordinal.
		\end{proof}
		Now fix $N ⩾ n\norm{f}/ε$ and apply the claim to find a set $B$ such that $σ^k(B) ∩ B = ∅$ for $1 ⩽ k ⩽ N$ and $\uint ⊆^* \bigcup_{-N ⩽ k ⩽ N} σ^k B$, i.e., the union covers all of $\uint$ except at most a negligible set. By applying $σ^{-N-1}$ to both sides, we then have $B ⊆^* \bigcup_{-2N-1 ⩽ k ⩽ -1} σ^k B$, so that, for each $x∈ B$ there is a positive $k ⩽ 2N+1$ such that $σ^k(x) ∈ B$. Call $k(x)$ the least such $k$ and 
		define $B_{\ell} = §{x∈ B : k(x) = \ell}$. These sets are disjoint and measurable, and $B ⊆^* \bigcup_{N+1⩽ \ell ⩽ 2N+1} B_\ell$. Moreover, the sets $σ^k B_\ell$ with $N+1⩽ \ell ⩽ 2N+1$ and $0⩽ k < \ell$ form a partition of $\uint$ (up to a negligible set). 
		
		Let us first assume that $S_\ell = \bigcup_{0⩽ k < \ell} σ^k B_\ell$ supports $f$ for some $\ell$, and let $f_k$ be the restriction of $f$ to $σ^k B_\ell$, so $f = \sum_{k<\ell} f_k$. Now write $\ell = qn + p$, with $0⩽ p < n$, and notice that there must be some $q_0⩽ q$ such $\sum_{m<p} \norm{f_{q_0n+m}} ⩽ \norm{f} / (q+1)$. We are now going to “split” the construction, avoiding indices between $q_0n$ and $q_0n + p$. For each $m< n$, we define 
		\begin{align*}
			g_m = \sum_{0⩽k<q_0} f_{kn + m} \; + \sum_{q_0⩽k<q} f_{kn + p + m}.
		\end{align*}
		As the $f_k$'s are pairwise disjoint, the $g_m$'s are too. In addition,
		\begin{equation*}
			\norm{h} ⩽ \frac{\norm{f}}{q+1} ⩽ \frac{\norm{f}n}{N} ⩽ ε
		\end{equation*}
		by the choice of $N$, and $f = h + \sum_{m<n} g_m$. Finally, let
		\begin{equation*}
			g = \sum_{m<n} σ^{-m} g_m.
		\end{equation*}
		Then $\norm{g} ⩽ \norm{f}$, and $σ^m g$ shares support with $g_m$, so it is disjoint from $g$. Moreover, $σ^m g ⩾ g_m$, which means that $f = h + \sum_{m<n} g_m ⩽ h + \sum_{m<n} σ^m g$.
		
		In the general case, just handle each $\ell$ separately: define $f^{(\ell)} = f \restr S_{\ell}$, find the corresponding $g^{(\ell)}$ and $h^{(\ell)}$, and then take the sum over $\ell$.
	\end{proof}
	
	Roughly speaking, this lemma states that up to an arbitrarily small error, every positive $p$-integrable function $f$ is bounded by the sum of a given number of disjoint $σ$-images of another positive $p$-integrable function $g$, which is not greater than $f$ in norm.

	\section{Model theory of $L^p$ lattices with an automorphism}\label{section: TA}
	
	We assume that the reader is familiar with the basic notions of continuous logic, which can be found for instance in \cite{cont-logic}.  We will follow however the slightly different convention for formulas present in \cite{claoc}, where the family of all formulas is closed under uniform convergence or, more precisely, forced limit, so will not make a distinction between definable predicates and formulas. If we drop the forced limit construct, we obtain the so-called \emph{basic formulas}.
	
	The class of atomless $L^p$ lattices is elementary in the continuous language $£L_{\mathrm{BL}} = §{0,-, \frac{x+y}{2}, \abs{\,⋅\,}, \norm{\,⋅\,}}$ and a complete axiomatisation may be found in \cite[\S2]{nakano}. We shall denote this theory by \alpl. To be precise, a model of \alpl\ is just a closed ball of an $L^p$ lattice, but this is enough to recover the entire lattice, so we will not make any distinction in the following and will still call these balls $L^p$ lattices.
	
	Model theoretic properties of \alpl\ were studied by Ben Yaacov, Berenstein, and Henson in \cite{lpIndep}, although with a different, but equivalent formalism. In \cite[Proposition~4.11 and Theorem~4.12]{lpIndep} they give a characterisation of independence for $L^p$ lattices and show that \alpl\ is stable. The fact that \alpl\ has quantifier elimination was already proved in \cite{ultraprods}. In addition, it follows from Kakutani representation theorem that \alpl\ is separably categorical, which in turn implies that its separable model $E$ is approximately homogeneous, meaning that, if $a$ and $b$ are tuples in $M$ with the same type, then for every $ε>0$ there exists an automorphism of $E$ such that $d(fa,b) < ε$.
	
	Write $£L_{σ}$ for the language of Banach lattices $£L_{\mathrm{BL}}$ expanded with a unitary function symbol $σ$ and define $T_{σ}$ to be the theory \alpl\ together with the axioms stating that $σ$ is an automorphism of Banach lattices, that is,
	\begin{itemize}[nosep, label={---}]
		\item (morphicity) $\sup_{\bar x} \norm{σ(F\bar x) - F(σ\bar x)} = 0$, for each function symbol $F$ in $£L_{\mathrm{BL}}$,
		\item (isometry) $\sup_{x} \abs{\norm{σx} - \norm{x}} = 0$,
		\item (sujectivity) $\sup_x \inf_y \norm{x-σy} = 0$.
	\end{itemize}

	For each $n>0$, let the $n$-th \emph{Rokhlin axiom} be the continuous sentence
	\begin{equation}\label{rokhlin axiom}
		\adjustlimits \sup_{x} \inf_{y} 
		\max§(
		\norm{ σ^i\abs{y} \meet \abs{y} }[big] : 0<i<n,
		\norm{ §( \sum_{k<n} σ^k \abs{y} - \abs{x} )[Big] ^{\!-} }[bigg],
		\norm{y} \dotminus \norm{x}
		)[bigg].
		\tag{\rokh{n}}
	\end{equation}
	Then $R_n = 0$ means that for all positive $x$ and all $ε>0$, there is a positive $y$ of norm less than $\norm{x}+ε$, whose first $n$ images under $σ$ are disjoint up to an error $ε$, and their sum is greater than $x$ except for a portion of norm less than $ε$.
	In the following, we will also use the notation $\rokh{n}(x,y)$ to mean $\rokh{n}$ with both quantifiers removed. 
	
	We call $T_A$ the theory $\mathrm{AL_1L}_{σ}$ together with all Rokhlin's axioms. \Cref{functional rokh} provides some examples of models of $T_A$; in fact, it characterises the separable models of $T_A$.
	
	\begin{lemma}
		If $σ$ is an aperiodic non-singular transformation of a standard atomless probability space $(X,£B,μ)$, then $(L^p(μ),\widetilde{σ})$ is a model of $T_A$.		
	\end{lemma}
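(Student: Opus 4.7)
The plan is to verify each component of $T_A$ in turn. Since $\mu$ is atomless, $L^p(\mu)$ is an atomless $L^p$ lattice, so the axioms of $\mathrm{AL}_p\mathrm{L}$ hold. The morphicity, isometry, and surjectivity axioms for $\widetilde{\sigma}$ follow from the discussion around formula \eqref{action def} and \Cref{representation aut separable case}, where the map $\sigma \mapsto \widetilde{\sigma}$ was shown to land in $\aut(L^p(\mu))$. So the real content is verifying each Rokhlin axiom $\rokh{n}$ for $n \ge 1$.

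For this, fix $n$, take any $x \in L^p(\mu)$ and $\varepsilon > 0$, and set $f = \abs{x}$. Apply the functional Rokhlin lemma (\Cref{functional rokh}) with parameters $n$ and $\varepsilon$ to obtain positive functions $g,h \in L^p(\mu)$ with $\norm{g} \le \norm{f} = \norm{x}$, $\norm{h} \le \varepsilon$, the images $\widetilde{\sigma}^k g$ pairwise disjoint for $0 \le k < n$, and $f \le \sum_{k<n} \widetilde{\sigma}^k g + h$. Take $y = g$ as a witness for the inner infimum in $\rokh{n}$.

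It remains to check that each of the three clauses in $\rokh{n}(x,y)$ is at most $\varepsilon$. Since $g \ge 0$ and the $\widetilde{\sigma}^k g$ are exactly (not just approximately) disjoint, $\norm{\widetilde{\sigma}^i\abs{y} \meet \abs{y}} = 0$ for $0 < i < n$. The inequality $f \le \sum_{k<n} \widetilde{\sigma}^k g + h$ rearranges to $\abs{x} - \sum_{k<n} \widetilde{\sigma}^k \abs{y} \le h$, so $\bigl(\sum_{k<n} \widetilde{\sigma}^k \abs{y} - \abs{x}\bigr)^{-} \le h$, and its norm is bounded by $\norm{h} \le \varepsilon$. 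Finally $\norm{y} \dotminus \norm{x} = 0$ since $\norm{g} \le \norm{x}$. All three quantities are therefore at most $\varepsilon$, which is enough to conclude $\rokh{n} = 0$.

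The one point worth remarking on is that the functional Rokhlin lemma is stated for $L^p$ of the unit interval, whereas the hypothesis here allows any standard atomless probability space; but such spaces are measure-isomorphic to $(\uint,\mathcal B,\lambda)$ and the isomorphism transports aperiodic non-singular transformations and their induced lattice automorphisms, so the reduction is immediate. The substantive obstacle—the construction underlying \Cref{functional rokh}—has already been surmounted, and no further work is needed.
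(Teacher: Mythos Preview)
Your argument is correct and is exactly what the paper intends: the lemma is stated immediately after the functional Rokhlin lemma as a direct consequence, with no separate proof given, and your verification of the three clauses of $\rokh{n}$ from the output of \Cref{functional rokh} is precisely the intended reading. The reduction from an arbitrary standard atomless probability space to the unit interval via measure isomorphism is also the right way to bridge the slight mismatch in hypotheses.
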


	We will split the proof that $T_A$ is the model companion of $T_{σ}$ into several lemmas. Clearly, every model of $T_A$ is a model of $T_{σ}$, so we just need to show that every model of $T_{σ}$ embeds in a model of $T_A$ and that $T_A$ is model complete. As  \alpl\ is definable in \alol\ by \cite[Lemma~3.3]{canonicalBases}, we will carry out these proofs assuming $p=1$, but the results will hold for any $p⩾1$.
	
	\begin{lemma}
		Every model of $T_{σ}$ embeds in a model of $T_A$.
	\end{lemma}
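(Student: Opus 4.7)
My plan is to embed $(E,\sigma)$ into the model of $T_A$ obtained as a ``product with an aperiodic rotation''. Since \alpl\ is interpretable in \alol, it suffices to treat $p=1$. By Kakutani representation, identify $E$ with $L^1(X,£F,\mu)$ for some measure space, fix an aperiodic measure-preserving transformation $\psi$ of $(\uint,£B,\lambda)$ (say the irrational rotation $\rho_{\alpha}$) and let $\tau$ be its induced automorphism of $L^1(\uint)$. Put
\[
F := L^1(X\times \uint,\mu\otimes\lambda)\cong E\tensor L^1(\uint),\qquad T := \sigma\tensor\tau.
\]
The map $e\mapsto e\tensor\mathbf 1$ is an isometric embedding of Banach lattices from $E$ into $F$, and since $\psi$ is measure-preserving we have $\tau\mathbf 1=\mathbf 1$, so $T$ extends $\sigma$ along this embedding and $(E,\sigma)\hookrightarrow(F,T)$ as $£L_{\sigma}$-structures.

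The substance of the proof is then to check that $(F,T)\models\rokh{n}$ for every $n\ge 1$. Identify $F$ with $L^1(\uint;E)$, so that $(Tu)(y)=\sigma(u(\psi^{-1}y))$ a.e. Fix a positive $u\in F$, $n\ge 1$ and $\varepsilon>0$. The function $g(y):=\norm{u(\cdot,y)}_E$ lies in $L^1(\uint)$, so by absolute continuity of the Lebesgue integral there is $\delta>0$ such that $\int_B g\,\dd\lambda<\varepsilon$ whenever $\lambda(B)<\delta$. Apply the classical Rokhlin lemma to $\psi$ to produce $A\subseteq\uint$ whose first $n$ iterates are pairwise disjoint and cover $\uint$ up to a set $B$ with $\lambda(B)<\delta$, and define
\[
v(y):=\bigvee_{0\le k<n}\sigma^{-k}\bigl(u(\psi^k y)\bigr)\ \text{ for }y\in A,\qquad v(y):=0\ \text{ elsewhere,}
\]
the join being the lattice supremum in $E$.

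The verification that $\rokh{n}(u,v)\le\varepsilon$ then has four routine steps. The $y$-support of $T^iv$ is $\psi^iA$, hence $v,Tv,\dots,T^{n-1}v$ are pairwise disjoint in $F$. Applying $\norm{\bigvee_k a_k}\le\sum_k\norm{a_k}$ in $E$ together with the isometry of $\sigma$ gives $\norm v_F\le\sum_k\int_{\psi^kA}\norm{u(y')}_E\,\dd y'\le\norm u_F$. For $y\in\psi^kA$ the value $v(\psi^{-k}y)$ dominates $\sigma^{-k}(u(y))$, so $T^kv(y)\ge u(y)$, and consequently $\sum_{k<n}T^kv\ge u$ on $X\times(\uint\setminus B)$. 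Hence $\norm{(u-\sum_kT^kv)^+}_F\le\int_Bg\,\dd\lambda<\varepsilon$.

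The main obstacle lies in the definition of $v$, where one must combine the lattice join with the action of $\sigma$ even though $\sigma$ need not come from any measurable transformation of $X$; the absolute continuity step is likewise essential, as a naive bound in terms of $\mu(X)\lambda(B)$ fails whenever $\mu(X)$ is infinite.
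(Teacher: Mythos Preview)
Your argument is correct and the underlying idea---tensor with an irrational rotation on $\uint$---is the same as the paper's. The execution, however, is genuinely different and worth noting.

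The paper first reduces to the separable case via Löwenheim--Skolem, then invokes \Cref{representation aut separable case} to realise $\sigma$ as $\widetilde f$ for a non-singular $f\colon\uint\to\uint$, forms the pointwise product $h(x,y)=(f(x),\psi(y))$ on $\uint^2$, checks that $h$ is aperiodic, and finally appeals to the already-proved Functional Rokhlin Lemma (\Cref{functional rokh}) to conclude that $(L^1(\lambda^2),\widetilde h)\models T_A$. In other words, the paper works at the level of point transformations and cashes in the heavy lemma.

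You instead stay at the Banach-lattice level throughout: you form the Bochner space $F=L^1(\uint;E)$ with $T u(y)=\sigma(u(\psi^{-1}y))$ and verify each $\rokh n$ by hand, using only the classical Rokhlin lemma on the $\uint$ factor. This buys you two things: you never need $\sigma$ to come from a non-singular transformation (so no reduction to separable models and no use of \Cref{representation aut separable case}), and you do not invoke \Cref{functional rokh} at all---your argument would in fact give an alternative proof that aperiodic systems satisfy the Rokhlin axioms. The price is the explicit construction of $v$ via the fibrewise join $\bigvee_{k<n}\sigma^{-k}(u(\psi^k y))$ and the absolute-continuity step, but both are routine once written down. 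Note, incidentally, that the Kakutani representation of $E$ as $L^1(X,\mu)$ is not really used anywhere in your argument; everything takes place in $L^1(\uint;E)$ with $E$ an abstract $L^1$ lattice.
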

	\begin{proof}
		It is enough to prove this for separable models.
		Let $(M,σ)$ be a model of $T_{σ}$. By	\Cref{representation aut separable case}, we may assume that $M$ is the $L^1$ lattice over the unit interval $L^1(λ)$ and $σ$ is induced by a non-singular transformation $f\colon \uint \to \uint$ by $σ(u) = \widetilde{f}(u) = (\!\dd σ_*λ/\!\dd λ)(u \comp f^{-1})$.
		
		Now let $g$ be an aperiodic measure-preserving transformation of $\uint$, for instance an irrational rotation, as defined in \eqref{def rotation}.  Then define a transformation $h$ of the unit square $(\uint^2,£B^2,λ^2)$ by
		\begin{equation*}
			h (x,y) ≔ §( f(x), g(y) )
		\end{equation*}
		and notice that $h_*λ^2 = f_*λ \otimes g_*λ$, so that $h$ is also non-singular. Additionally, $h$ is aperiodic, since for all $n$
		\begin{align*}
			λ^2 §{ (x,y): h^n (x,y) = (x,y) } 
			&= λ^2 §{ (x,y): f^n (x) = x \text{ and } g^n (y) = y } \\
			&= λ §{ x: f^n (x) = x} ⋅ \xunderbrace{λ §{y: g^n (y) = y }}{=0} = 0,
		\end{align*}
		so the induced automorphism $τ = \widetilde{h}$ on $N= L^1(λ^2)$ satisfies Rokhlin's axioms. Consequently, we just need to check that the application $Φ\colon M \to N$ defined by $Φ(u)(x,y) = u(x)$ is an $£L_{σ}$-embedding. It clearly is an isometry of Banach lattices, so it remains to show that $τ \comp Φ =   Φ \comp σ$, but
		\begin{align*}
			τ(Φ(u))(x,y) &= \frac{\dd h_*λ^2}{\dd λ^{2}}(x,y) ⋅ Φ(u)§( h^{-1}(x,y) )
			\\
			&= \frac{\dd {f}_*λ}{\dd λ}(x)
			⋅ \xunderbrace{\frac{\dd {g}_*λ}{\dd λ}(y)}{=1} 
			⋅ Φ(u)( f^{-1}x , g^{-1}y )
			\\
			&= \frac{\dd {f}_*λ}{\dd λ}(x)
			⋅ u( f^{-1}x ) = σ(u)(x) = Φ(σ(u))(x,y),
		\end{align*}
		which concludes the proof.
	\end{proof}

	We will use the following characterisation of model completeness in continuous logic.
	
	\begin{fact}[{\cite[Exercise~6.21]{claoc}}]\label{model-completeness criterion}
		A theory $T$ is model-complete if and only if, for all $\aleph_1$-saturated 
		models $M ⊆ N$ of $T$, every quantifier-free basic formula $φ(x,y)$, every $c∈M^{\card{y}}$, every $b∈N^{\card{x}}$, and every $ε>0$, there exists $a∈M^{\card{x}}$ such that
		\begin{equation*}
			\abs{ φ(a,c) - φ(b,c) } < ε.
		\end{equation*}
	\end{fact}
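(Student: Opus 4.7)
My plan is to treat the two directions of the biconditional separately.

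For the easy direction ($\Rightarrow$), I will use the definition of model-completeness: if $T$ is model-complete, then every inclusion $M \subseteq N$ between models of $T$ is elementary. Given $\varphi, c, b, \varepsilon$ as in the statement, the complete type $\mathrm{tp}^{N}(b/c)$ is then a consistent type over $c$ in $M$, and since $M$ is $\aleph_1$-saturated it is realised by some $a \in M$. The condition $\varphi(x,c) = \varphi(b,c)$ belongs to this type, so in fact $|\varphi(a,c) - \varphi(b,c)| = 0$, which is stronger than required.

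For the substantive direction ($\Leftarrow$), I assume the approximate condition and must show that any inclusion $M \subseteq N$ between models of $T$ is elementary. I first reduce to the $\aleph_1$-saturated case by replacing $M$ and $N$ with their ultrapowers along a common non-principal ultrafilter on $\mathbb{N}$: the inclusion passes to the ultrapowers, the ultrapowers are $\aleph_1$-saturated, and elementarity of the new inclusion implies elementarity of the original. Then I carry out a zig-zag elementary chain construction: starting from $M_0 := M$ and $N_0 := N$, I inductively build $M_i \preceq M_{i+1}$ and $N_i \preceq N_{i+1}$ satisfying $M_i \subseteq N_i \subseteq M_{i+1}$. The common union $L := \bigcup_i M_i = \bigcup_i N_i$ is then an elementary extension of both $M$ and $N$, so for $\bar a \in M$ and any formula $\varphi$, $\varphi^{M}(\bar a) = \varphi^{L}(\bar a) = \varphi^{N}(\bar a)$, proving $M \preceq N$.

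The crux is producing $M_{i+1} \succeq M_i$ containing $N_i$ as a substructure, which amounts to showing that the continuous atomic diagram of $N_i$ over $M_i$ is consistent with the elementary diagram of $M_i$. By compactness, it suffices to approximate, to arbitrary precision $\varepsilon > 0$, every finite collection of QF-basic conditions $\varphi_j(\bar m, \bar n') = r_j$ with $\bar m \in M_i$, $\bar n' \in N_i \setminus M_i$, and $r_j := \varphi_j^{N_i}(\bar m, \bar n')$. I will package these into the single QF-basic formula $\psi(\bar y, \bar m) := \max_j |\varphi_j(\bar m, \bar y) - r_j|$ and apply the hypothesis with $\bar b := \bar n'$ and $\bar c := \bar m$: since $\psi(\bar n', \bar m) = 0$, the hypothesis produces $\bar a \in M_i$ with $\psi(\bar a, \bar m) < \varepsilon$, giving the simultaneous approximation of all $\varphi_j$-conditions. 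This yields the required consistency and completes the extension step.

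The main obstacle I anticipate is precisely this passage from a single-formula hypothesis to a multi-condition consistency requirement; the max-packaging trick is what converts the pointwise QF approximation into the diagrammatic consistency needed, and is the only essential use of the hypothesis. The ultrapower reduction and the elementary chain are otherwise standard manoeuvres of continuous model theory.
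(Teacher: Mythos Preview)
The paper does not prove this statement; it is quoted as a Fact from \cite[Exercise~6.21]{claoc}, so there is no proof in the paper to compare against. Your argument is the standard one and is essentially correct.

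One point you should make explicit: in the zig-zag construction, the hypothesis must be invoked not only on the initial pair $(M_0,N_0)$ but on every pair $(M_i,N_i)$ and $(N_i,M_{i+1})$ along the chain, and the hypothesis as stated requires \emph{both} members of the pair to be $\aleph_1$-saturated. Your ultrapower reduction arranges this for $M_0$ and $N_0$, but nothing in your construction guarantees it for $M_1, N_1, \ldots$. The fix is immediate---when you realise $\mathrm{eldiag}(M_i)\cup\mathrm{diag}(N_i)$ to produce $M_{i+1}$, simply choose the realising model to be $\aleph_1$-saturated (and likewise for $N_{i+1}$)---but it should be said, since otherwise the inductive step does not literally match the hypothesis. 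With that adjustment the argument is complete.
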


	By assuming the model to be saturated we have an exact version of the Rokhlin lemma, with no error. We will denote the band generated by $f$ by $\band{f}$.
	
	\begin{lemma}\label{exact rokh}
		Suppose $M$ is an $\aleph_1$-saturated model of $T_A$ and $F$ is a finite set of positive elements of $M$. Then there exists a principal band $B$ which contains $F$ and is invariant under $σ$, meaning that $σ(B) = B$. Moreover, for any integer $n>0$, there is a positive $g$ in $M$ such that
		\begin{math}
			§{ \band{σ^i g} : i < n }
		\end{math}
		is a partition of $B$. In particular, $g$ and $σ^n g$ generate the same band.
	\end{lemma}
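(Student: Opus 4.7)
My plan is to handle the two assertions separately: first build the principal $σ$-invariant band $B$ directly, without invoking saturation, and then, given $n$, extract the Rokhlin tower $g$ inside $B$ by combining the axiom $\rokh{n}$ with $\aleph_1$-saturation.

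For the band, set $f ≔ \sum_{x ∈ F} x$ and form the absolutely convergent series
\begin{equation*}
    h ≔ \sum_{k ∈ \zz} 2^{-\abs{k}}\, σ^k f,
\end{equation*}
which defines a positive element of $M$ of norm at most $3\norm{f}$. Take $B ≔ \band{h}$: this band is principal by construction and contains $F$ because $x ⩽ f ⩽ h$ for each $x ∈ F$. For $σ$-invariance, rewrite $σ h = \sum_\ell 2^{-\abs{\ell-1}} σ^\ell f$ and use $2^{-\abs{\ell-1}} ⩽ 2 \cdot 2^{-\abs{\ell}}$ to obtain $σh ⩽ 2h$, whence $σh ∈ B$; the symmetric estimate $σ^{-1}h ⩽ 2h$ gives $σ^{-1}h ∈ B$ as well. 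Consequently $\band{σh} = \band{h} = B$, that is, $σ B = B$.

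For the tower, fix $n$ and consider, in a single positive variable $y$, the partial type asserting that $σ^i y \meet y = 0$ for each $0<i<n$, that $\sum_{k<n} σ^k y ⩾ h$, and that $\norm{y} ⩽ \norm{h}$. Every $ε$-thickening of a finite subtype is witnessed by $\rokh{n}(h, y_ε)$, so the type is finitely satisfiable in $M$ and, by $\aleph_1$-saturation, realized by some positive $y ∈ M$. Set $g ≔ y \restr B$: since $σ B = B$, projection commutes with $σ$, so $σ^i g = (σ^i y) \restr B ∈ B$ with $σ^i g ⩽ σ^i y$, and the pairwise disjointness of the $σ^i y$ passes to the $σ^i g$. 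Projecting the inequality $h ⩽ \sum_{k<n} σ^k y$ onto $B$ and using $h \restr B = h$ yields $h ⩽ \sum_{k<n} σ^k g$, so $B = \band{h} ⊆ \band{\sum_{k<n} σ^k g} ⊆ B$, forcing equality and the partition $B = \bigoplus_{i<n} \band{σ^i g}$.

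The \emph{in particular} clause is then formal: applying $σ$ to this decomposition and using $σ B = B$ produces a second one, $B = \bigoplus_{i=1}^{n} \band{σ^i g}$, so $\band{g}$ and $\band{σ^n g}$ both coincide with the direct-summand complement of $\bigoplus_{i=1}^{n-1} \band{σ^i g}$ in $B$. I expect the main conceptual hurdle to be the construction of $B$: the smallest $σ$-invariant band containing $F$ need not be principal, while a generator produced from a type like ``$σ^k f ∈ \band{h}$ for every $k ∈ \zz$'' need not itself generate a $σ$-invariant band. The weighted sum with geometrically decaying coefficients sidesteps both difficulties at once, after which the remainder of the argument is saturation plus bookkeeping.
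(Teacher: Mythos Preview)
Your proof is correct and follows essentially the same approach as the paper: both construct the $σ$-invariant principal band via the geometrically weighted series $\sum_{k∈\zz} 2^{-|k|} σ^k f$, then invoke the Rokhlin axiom $\rokh{n}$ together with $\aleph_1$-saturation to obtain an exact tower, restrict the result to $B$, and deduce $\band{g} = \band{σ^n g}$ by comparing the two partitions $\bigoplus_{i<n}\band{σ^i g}$ and $\bigoplus_{1\le i\le n}\band{σ^i g}$ of $B$. Your write-up is somewhat more explicit than the paper's in justifying $σ B = B$ (via $σh \le 2h$ and $σ^{-1}h \le 2h$) and in spelling out the complement argument at the end, but the ideas are identical.
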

	\begin{proof}
		Consider the average $f_0 ≔ \frac 1{\card{F}}\sum_{f∈F}\abs{f} ∈ M$ and define
		\begin{equation*}
			f_1 ≔ \frac{1}{3} \sum_{k∈\zz} \frac{σ^k f_0}{2^{\abs{k}}},
		\end{equation*}
		which is still an element of $M$ and generates a band $B ≔ \band{f_1}$ that is invariant under $σ$ and contains $F$.
		
		By \rokh{n} applied to $f_1$ and saturation, 
		there exists a positive $g∈M$ such that $σ^i g \disjoint g$, for all $i<n$, and $f_1 ⩽ \sum_{i<n} σ^i g$. If we replace $g$ by $g\restr f_1$, we get an element still satisfying these properties but also lying in the band generated by $f_1$, so the bands $\band{σ^i g}$ for $i<n$ form a partition of $B$.
		
		As $σ^n g$ is disjoint from any $σ^i g$ with $0<i<n$ (because $σ^{n-1} g \disjoint σ^{i-1} g$ implies $σ^{n} g \disjoint σ^{i} g$) and by the invariance of $B$ under $σ$, we have
		\begin{equation*}
			\bigcup_{i<n} \band{σ^i g} = B = σ§( B ) = \bigcup_{0<i⩽n} \band{σ^i g} ,
		\end{equation*}
		where the unions are disjoint. This means that $\band{σ^n g} = \band{g}$.
	\end{proof}
	
	\begin{lemma}
		$T_A$ is model complete.
	\end{lemma}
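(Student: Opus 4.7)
The plan is to verify the criterion of \Cref{model-completeness criterion}: given $\aleph_1$-saturated models $M \subseteq N$ of $T_A$, a quantifier-free basic $L_{σ}$-formula $φ(x,y)$, parameters $c \in M^{\card{y}}$, $b \in N^{\card{x}}$, and $ε > 0$, we construct $a \in M^{\card{x}}$ with $\abs{φ(a,c) - φ(b,c)} < ε$. The first move is to eliminate $σ$ from the formula: since $φ$ is quantifier-free in $L_{σ}$ and uses only finitely many occurrences of $σ^{\pm 1}$, there exist an integer $k$ and a quantifier-free basic $\lBL$-formula $ψ$ such that $φ(x,y) = ψ((σ^i x)_{\abs{i} \leq k}, (σ^i y)_{\abs{i} \leq k})$. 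Combined with the quantifier elimination of \alol, this reduces the task to matching, up to some $ε' > 0$ depending on $ε$ and a modulus of uniform continuity of $ψ$, the joint quantifier-free $\lBL$-type of the two extended tuples — which, in an atomless $L^1$ lattice, is a question about joint distributions.

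The key tool is then \Cref{exact rokh}. Applied in $M$ to a finite set containing $\abs{c_j}$ together with an auxiliary element $ξ$ chosen by saturation to provide extra room disjoint from $c$, with height $n$ chosen much larger than $k/ε'$, it produces a $σ$-invariant principal band $B_M \ni c, ξ$ and a positive $g_M \in M$ such that the bands $\band{σ^i g_M}$ for $i < n$ partition $B_M$. The element $c$ decomposes as $c = \sum_{i<n} σ^i \tilde c^{(i)}$ with $\tilde c^{(i)} \in \band{g_M}$; since $B_M$ remains a $σ$-invariant band in $N$, the restriction $b \restr B_M$ decomposes in the same tower as $\sum_{i<n} σ^i \tilde b^{(i)}$, with each $\tilde b^{(i)}$ lying in the band generated by $g_M$ inside $N$, and a residual piece $b - (b \restr B_M)$ sits in $B_M^{\perp}$.

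The candidate $a$ is then assembled inside $M$. By the homogeneity of atomless $L^1$ lattices — a consequence of the quantifier elimination and separable categoricity of \alol\ — together with the saturation of $M$, there exist elements $\tilde a^{(i)} \in \band{g_M} \subseteq M$ realising the joint quantifier-free $\lBL$-type of $(\tilde b^{(i)})_{i<n}$ over $(\tilde c^{(i)})_{i<n}$; a parallel argument inside $B_M^{\perp} \cap M$, making use of the auxiliary $ξ$, produces a disjoint part $a_0$ realising the quantifier-free type of the residual piece of $b$. Setting $a = a_0 + \sum_{i<n} σ^i \tilde a^{(i)}$, applying $σ^j$ for $\abs{j} \leq k$ shifts the tower coordinates of $a$ in exactly the same way as it shifts those of $b$, except for at most $2k$ slices near the top and bottom of the tower where the tower structure fails to mirror the action of $σ$; these contribute a total error of at most $2k\norm{b}/n < ε'$ to the relevant joint distributions, which transfers to the desired approximation of $φ$.

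I expect the main obstacle to be the ``room'' issue: a priori, the $σ$-invariant band generated by $c$ in $M$ could exhaust $M$ even while $b$ has a non-trivial disjoint component in $N$, and similarly one needs enough atomless complement inside $\band{g_M}$ to realise the type of the $(\tilde b^{(i)})$. Both points are addressed by using the saturation of $M$ to choose the auxiliary $ξ$ \emph{before} applying Rokhlin, so that $B_M$ and the base band $\band{g_M}$ are genuinely room-providing. A secondary but necessary bookkeeping step is confirming the $2k\norm{b}/n$ bound on the error introduced by the boundary slices of the tower; this amounts to tracking carefully which pieces of the decomposition get pushed outside the tower under $σ^{\pm 1}$ and invoking the disjointness of the remaining pieces to control their contribution to the $L^1$-norm.
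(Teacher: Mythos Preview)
Your plan is essentially the paper's proof: eliminate $σ$ in favour of an $\lBL$-formula in the orbit variables, build a Rokhlin tower in $M$ over the parameters via \Cref{exact rokh}, decompose $b$ along the tower, realise the base pieces inside $M$ using saturation and the quantifier elimination of \alol, and control the error coming from the ends of the tower. Two points in your sketch are not yet right, and the paper addresses them with steps you have not included.

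First, the bound $2k\norm{b}/n$ on the boundary contribution is false as stated: for a fixed tower decomposition the whole of $\norm{b}$ could sit in the first $2k$ levels. What is true is that this bound holds on average over the $n$ possible cyclic shifts of the tower, so a pigeonhole step is required. The paper makes this explicit: it finds an index $n_0$ at which the $2\ell$ relevant slices have total norm below $2\ell\norm{h}/n$, and then replaces $g_0,g_1$ by $σ^{n_0}g_0,σ^{n_0}g_1$ so that those slices become the boundary ones.

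Second, your handling of the residual piece $b'' = b - b\restr B_M$ is where the proposal has a genuine gap. You need the joint $\lBL$-type of the whole tuple $(σ^j b'')_{\abs{j}\le k}$, not just of $b''$ itself, and nothing in your construction controls how $σ$ acts on $b''$ in $N$. Moreover your auxiliary $ξ$ cannot help in $B_M^{\perp}$, since you explicitly put $ξ$ inside $B_M$. The paper fixes this with two further applications of \Cref{exact rokh}: one in $N$ applied to $h''$, producing a tower $(σ^i g_1)_{i<n}$ that decomposes the orbit of $h''$ into pieces $h''_i$ whose $\lBL$-type one can read off; and one in $M$, applied to a positive element $a$ chosen by saturation in $B^{\perp}\cap M$, producing a tower $(σ^i g_2)_{i<n}$ whose base band is a fresh copy of an atomless $L^1$ lattice in which the types of the $σ^{-i}h''_i$ can be realised. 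Without the Rokhlin step in $N$ you have no decomposition of $b''$ to transplant.
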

	\begin{proof}
		We make use of \Cref{model-completeness criterion}. Consider then two $\aleph_1$-saturated models $M⊆ N$ of $T_A$, a quantifier-free basic formula $φ(x,y)$, some elements $f∈M^{\card{y}}$ and $h∈N^{\card{x}}$, and a positive number $ε$. For the sake of simplicity, we may assume that $\card{x} = 1$. We shall find $\hbar ∈ M^{\card{x}}$ such that $\abs{ φ(\hbar,f) - φ(h,f) } < ε$.
		Being quantifier-free, the formula $φ$ is of the form
		\begin{equation*}
			φ(x,f) = ψ(σ^i x : i < \ell; f_j : j < m ) ,
		\end{equation*}
		where $σ$ does not appear in $ψ$, possibly adding some new parameters, which we will write collectively again as $f$ in what follows.
		We will also abbreviate $(σ^{i} x : i< \ell)$ as $(σ^{<\ell} x)$.

		Now, the formula $ψ$ is a continuous combination of norms of terms $\norm{t(σ^{<\ell}x;f)}$, so there exists $δ_0>0$ such that 
		\begin{equation*}
			\max_{t \textrm{ term of } ψ} \abs{
				\norm{ t(σ^{<\ell} h; f)} - \norm{ t(σ^{<\ell} \hbar; f)}
			}[Big]
			< δ_0 
			\implies 
			\abs{ φ(h,f) - φ(\hbar,f) } < ε,
		\end{equation*}
		for all $\hbar ∈ M$. It will then suffice to find $\hbar ∈ M$ such that 
		$\abs{ \norm{ t(σ^{<\ell} h; f)} - \norm{ t(σ^{<\ell} \hbar; f)} }< δ_0$  for any term $t$ of $φ$. We will now split the space in many bands where the restrictions of most of the terms above in $h$ have the same norm as the corresponding term in $\hbar$. 
		
		It will then suffice to show that the remaining terms give a small contribution.
		As $\norm{t(\;\cdot\;;f)}{}^N$ is uniformly continuous, there is some $δ_1>0$ such that, for all $a_1,a_2∈N^{\ell}$,
		\begin{equation*}
			d_1(a_1,a_2) <δ_1 \implies \abs{\norm{t(a_1;f)} - \norm{t(a_2;f)}}[Big] < \frac{δ_0}{2\ell},
		\end{equation*}
		where $d_1$ is the distance given by pointwise sum of the components.
		
		We apply \Cref{exact rokh} to $M$ and $F≔§{f_j: j<m}$ with 
		\begin{equation*}
			n > \frac{4\ell \norm{h}}{δ_1},
		\end{equation*}
		in order to find a positive $g_0∈ M$ such that $§{\band{σ^i g_0} : i<n}$ forms a partition of the $σ$-invariant band $B$ generated by $F$. We consider the components $h'$ and $h''$ of $h$ respectively in the band of $N$ generated by $B$ and in its disjoint complement, which are both $σ$-invariant.
		
		We then apply \Cref{exact rokh} to $N$ and $h''$ and find $g_1∈N$ such that $§{\band{σ^i g_1} : i<n}$ forms a partition of a band disjoint from $B$ and containing $h''$. 
		
		For each $i<n$, consider the $i$-th component $h_i ≔ h'\restr g_0 + h'' \restr g_1$ of $h$. There is $n_0 <n$ such that the $\ell$ components from $n_0$ together with the previous $\ell$ make up less than $2\ell/ n$ of the total norm of $h$, that is,
		\begin{equation*}
			\sum_{k<\ell} \norm{h_{(n_0 + k) \bmod n}} + \sum_{k<\ell} \norm{h_{(n_0 + n -1 - k) \bmod n}} < \frac{2 \ell \norm{h} }{n}
		\end{equation*}
		We then replace $g_0$ and $g_1$ by their $n_0$-th images under $σ$, so that the first and last $\ell$ components of $h$ contribute to the norm of $h$ by less than $2\ell \norm{h} / n$.
		
		Denote $h \restr σ^i g_0$ by $h'_i$ and $h \restr σ^i g_1$ by $h''_i$ and consider the type of $(σ^{-i}h'_i: i<n)$ over  $§{g_0, σ^{-i}f : i<n}$ in the reduct $N \restr \lBL$ (i.e., $N$ considered just as a Banach lattice, without any reference to the automorphism). As $\mathrm{AL_1L}$ is model complete, this is a type in $M \restr \lBL$, and by saturation, it is realised by some tuple $(\hbar'_i: i<n)$ in $M$.
		In particular, each $\hbar'_i$ lies in the band generated by $g_0$, so that it is disjoint from its first $n-1$ images under $σ$.
		We thus have the following decomposition
		\begin{equation*}
			t§(σ^{<\ell}h; f ) = 
			\sum_{i<n} t§(σ^{<\ell}h; f ) \restr σ^i g_0 + 
			\sum_{i<n} t§(σ^{<\ell}h; f ) \restr σ^i g_1,
		\end{equation*}
		where each addend is disjoint from the others.
		Now, for all $i<n$,
		\begin{equation*}
			t§(σ^{<\ell} h ; f ) \restr σ^i g_0 = t§(σ^k h'_{(i-k)\bmod n} : k< \ell; f \restr σ^i g_0)
		\end{equation*}
		and
		\begin{equation*}
			t§(σ^{<\ell} h ; f ) \restr σ^i g_1 = t§(σ^k h''_{(i-k)\bmod n} : k< \ell; 0).
		\end{equation*}
		
		As $M$ is $\aleph_1$-saturated, there exists a positive  $a∈M$ disjoint from $B$ and generating a band invariant under $σ$. Apply \Cref{exact rokh} to $M$ and $a$ and find $g_2∈M$ such that $§{\band{σ^i g_2} : i<n}$ forms a partition of $\band{a}$. The band $\band{g_2}$ is still an $L_1$-lattice, so there is a realisation $(\hbar''_i: i<n)$ of the type of $(σ^{-i}h''_i: i<n)$ in $N \restr \lBL$. 
		Each $σ^i \hbar''_i$ lies in the band $\band{σ^i g_2}$, so that they are pairwise disjoint. 
		Thus,
		\begin{equation*}
			\hbar ≔ \sum_{i<n} σ^i(\hbar'_i + \hbar''_i)
		\end{equation*}
		may be decomposed in the same way as $h$ by replacing $g_1$ with $g_2$, $h'_j$ with $σ^j\hbar'_j$, and $h''_j$ with $σ^j\hbar''_j$. 
		
		We now compare the norms of the components of the terms in $h$ and $\hbar$.
		For all $\ell ⩽ i <n$, 
		\begin{align*}
			\norm{ t( σ^i\hbar'_{i-k} : k<\ell ; f \restr σ^i g_0) } 
			&= \norm{ t( \hbar'_{i-k} : k<\ell ; σ^{-i}f \restr g_0 )  } \\
			&= \norm{ t( σ^{k-i} h'_{i-k} : k<\ell ; σ^{-i} f \restr g_0 ) } \\
			&= \norm{ t( σ^{k} h'_{i-k} : k<\ell ; f \restr σ^i g_0 ) },
		\end{align*}
		where the first and last equalities follow from the fact that $σ$ is isometric and commutes with all other symbols of $\lBL$, and the second equality is the application of the realisation of types.
		For the same reasons we have	
		\begin{align*}
			\norm{ t( σ^i\hbar''_{i-k} : k<\ell ;0) } 
			&= \norm{ t( σ^{k} h''_{i-k} : k<\ell ; 0 ) },
		\end{align*}
		which means that the norms of the components of index $i⩾ \ell$ cancel out and we are left only with the first $\ell$ components. More precisely,
		\begin{multline}\label{ta-complete-unruly-bits}
			\abs{ \norm{t§(σ^{<\ell} h; f )} - \norm{t§(σ^{<\ell} \hbar; f )} }[Big]
			⩽ \\
			\sum_{i<\ell} \abs{
				\norm{ t( σ^{k} h_{(i-k)\bmod n} : k<\ell ;f \restr σ^i g_0 ) }
				- 
				\norm{ t( σ^{k} \hbar_{(i-k)\bmod n} : k<\ell ;f \restr σ^i g_0 ) }
			}[Big],
		\end{multline}
		where $\hbar_i$ is simply $σ^i(\hbar'_i + \hbar''_i)$.

		By the choice of $n_0$, for each $i<\ell$,
		\begin{equation*}
			\sum_{k<\ell}\norm{ h_{(i-k)\bmod n} } = \sum_{k<\ell}\norm{ \hbar_{(i-k)\bmod n} } < \frac{2\ell \norm{h} }{ n }
		\end{equation*}
		so that
		\begin{equation*}
			\sum_{k < \ell} \norm{ h_{(i-k)\bmod n} - \hbar_{(i-k)\bmod n} } < \frac{4 \ell \norm{h} }{ n } < δ_1
		\end{equation*}
		and thus, by uniform continuity of $\norm{t(\;\cdot\;;f)}{}^N$, the sum in \eqref{ta-complete-unruly-bits} is strictly less than $δ_0$, which is what we wanted.
	\end{proof}
	
	We have thus proved the main result.
	\begin{theo}
		$T_A$ is a model companion of $T_{σ}$.
	\end{theo}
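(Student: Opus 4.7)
The plan is a short assembly: the two non-trivial ingredients needed have just been established in the preceding lemmas. By definition, to conclude that $T_A$ is a model companion of $T_σ$, I need only verify two conditions: (a) that $T_A$ and $T_σ$ are mutually model-consistent, i.e., every model of one embeds into a model of the other; and (b) that $T_A$ is model complete.

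For (a), one direction is immediate: since $T_A$ is obtained from $T_σ$ by adjoining the Rokhlin axioms $\rokh{n}$, every model of $T_A$ is \emph{already} a model of $T_σ$, and hence trivially embeds into one via the identity. The reverse direction — that every model of $T_σ$ embeds in a model of $T_A$ — is precisely the first of the two lemmas proved above, whose proof reduces (via the separable case and Kakutani's representation) to tensoring the given non-singular transformation $f$ on $\uint$ with an irrational rotation $g$ on $\uint$; the rotation contributes the aperiodicity and the trivial Radon--Nikodym factor needed to verify the Rokhlin axioms and to check that the natural embedding $u\mapsto u\otimes 1$ commutes with $σ$.

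Part (b), the model completeness of $T_A$, is the content of the lemma just proved, where the continuous-logic criterion for model completeness is combined with the exact (error-free) Rokhlin decomposition available in $\aleph_1$-saturated models and a careful choice of tower length $n\gg \ell\norm{h}/δ_1$ that makes the boundary contribution of the $2\ell$ unmatched components negligible.

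Given both lemmas, the theorem follows in a single sentence: conditions (a) and (b) hold, therefore $T_A$ is a model companion of $T_σ$. There is no remaining obstacle for this statement itself; the real mathematical work lay in the two preparatory lemmas, and in particular in the functional Rokhlin lemma of the previous section, which is the ultimate source of both the embedding argument (through its measure-theoretic antecedent) and the saturated-model decomposition used for model completeness.
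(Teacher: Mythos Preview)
Your proposal is correct and follows exactly the paper's approach: the theorem is an immediate assembly of the two preceding lemmas (embedding of models of $T_σ$ into models of $T_A$, and model completeness of $T_A$), together with the trivial observation that $T_A \supseteq T_σ$. The paper itself simply writes ``We have thus proved the main result'' after the two lemmas, so your summary is if anything more detailed than the original.
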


	\subsection{Quantifier elimination} Given a cardinal $κ>2^{\aleph_0}$, recall that a normed space structure is said to be \emph{$κ$-universal} if it is $κ$-strongly homogeneous and $κ$-saturated. We will work in a $κ$-universal model $£U$ of \alpl, for some large $κ$.
	
	We will show that $T_A$ has quantifier elimination using the following result by Lascar, which can be shown to hold in continuous logic as well.
	
	\begin{fact}[{\cite[Thereom~3.3]{lascar}}]
		Let $T$ be a stable theory, $£C$ a large universal model of $T$, and $M_0$, $M_1$ and $M_2$ elementary substructures of $£C$. If $M_0 \prec M_1,M_2$ and $M_1$ and $M_2$ are independent over $M_0$, then $M_1 ∩ M_2 = M_0$ and for all automorphisms $α$ of $M_1$ and $β$ of $M_2$ having the same restriction to $M_0$, the application $α ∪ β \colon M_1 ∪ M_2 \to M_1 ∪ M_2$ is elementary.
	\end{fact}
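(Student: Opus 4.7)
The plan is to transcribe Lascar's classical argument into continuous logic, relying on three tools that are available for stable continuous theories: monotonicity of non-forking, stationarity of types over models, and the fact that an elementary submodel is definably closed.

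For the first assertion, the inclusion $M_0 \subseteq M_1 \cap M_2$ is immediate. Conversely, given $a \in M_1 \cap M_2$, monotonicity applied to $M_1 \ind[M_0] M_2$ yields $a \ind[M_0] a$. In a stable theory, self-independence over a set $A$ forces $\type(a/A)$ to be realised only by $a$: its unique non-forking extension to $Aa$ contains the condition $d(x,a) = 0$, so every realisation of $\type(a/A)$ must equal $a$. Applied with $A = M_0$, this gives $a \in \dcl(M_0) = M_0$.

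The first part also ensures that $\alpha \cup \beta$ is a well-defined function $M_1 \cup M_2 \to M_1 \cup M_2$, since $\alpha$ and $\beta$ agree on $M_1 \cap M_2 = M_0$. To show it is elementary, fix tuples $\bar{a} \in M_1$ and $\bar{b} \in M_2$; the goal is $\type(\bar{a}\bar{b}) = \type(\alpha(\bar{a})\beta(\bar{b}))$. Let $\gamma$ denote the common restriction of $\alpha$ and $\beta$ to $M_0$. Since $\alpha$ is an automorphism of the elementary submodel $M_1$, for every formula $\varphi(\bar{x},\bar{y})$ and every $\bar{m} \in M_0$ one has $\varphi(\bar{a},\bar{m}) = \varphi(\alpha(\bar{a}),\gamma(\bar{m}))$, so $\type(\bar{a}/M_0)$ and $\type(\alpha(\bar{a})/M_0)$ are $\gamma$-conjugate, and symmetrically for $\bar{b}$ and $\beta$. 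From $M_1 \ind[M_0] M_2$, monotonicity also gives $\bar{a} \ind[M_0] \bar{b}$ and $\alpha(\bar{a}) \ind[M_0] \beta(\bar{b})$.

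The decisive step is then stationarity of types over models: $\type(\bar{a}/M_0\bar{b})$ is the unique non-forking extension of $\type(\bar{a}/M_0)$, and analogously $\type(\alpha(\bar{a})/M_0\beta(\bar{b}))$ is the unique non-forking extension of $\type(\alpha(\bar{a})/M_0)$; since the input data match via $\gamma$, the two joint types over $M_0$ must themselves be $\gamma$-conjugate, and restricting to the empty set yields the required equality of complete types. The main obstacle I anticipate is not in this skeleton — which is Lascar's argument verbatim — but in verifying that stationarity over models and the rest of the non-forking calculus behave in continuous logic exactly as needed; these are, however, standard elements of the stability theory for continuous structures.
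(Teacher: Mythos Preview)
The paper does not give its own proof of this statement: it is stated as a \emph{Fact}, cited from Lascar, with only the remark that the result ``can be shown to hold in continuous logic as well.'' Your proposal therefore supplies what the paper deliberately omits, and it does so by following Lascar's original argument essentially verbatim, which is the intended route.

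One step in the first assertion deserves tightening. From $a \ind[M_0] a$ you correctly deduce that the unique non-forking extension of $\type(a/M_0)$ to $M_0 a$ contains $d(x,a)=0$, but the conclusion ``so every realisation of $\type(a/M_0)$ must equal $a$'' does not follow directly: an arbitrary realisation $a'$ need not satisfy $a' \ind[M_0] a$, and hence need not realise that particular extension. The clean fix is the standard one: $a \ind[M_0] a$ forces $a \in \acl(M_0)$ (otherwise a non-constant $M_0$-indiscernible sequence in $\type(a/M_0)$ witnesses that $d(x,a)=0$ divides over $M_0$), and since $M_0$ is a model, $\acl(M_0)=M_0$. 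Alternatively, observe that the global non-forking extension is $M_0$-definable and realised by $a$, whence $a \in \dcl(M_0)=M_0$. Either way, your argument is then complete; the second part, via stationarity over models, is correct as written.
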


	\begin{theo}\label{qe of TA}
		$T_A$ has quantifier elimination.
	\end{theo}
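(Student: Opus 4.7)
The plan is to show that, in a universal model $\monsterclass \models T_A$, any two tuples $\bar a, \bar b$ with the same quantifier-free type lie in the same orbit of $\operatorname{Aut}(\monsterclass)$; combined with the model completeness of $T_A$ proved above, this yields quantifier elimination.

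First, sameness of quantifier-free types provides an $L_\sigma$-iso\-mor\-phism $g\colon \langle\bar a\rangle \to \langle\bar b\rangle$ between the substructures generated by the tuples. The next step is to extend each of these substructures to a small elementary substructure of $\monsterclass$. Arguing as in the model completeness proof and using the exact Rokhlin lemma (\Cref{exact rokh}) in the saturated setting, one sees that any separable $L_\sigma$-substructure of $\monsterclass$ can be completed, inside $\monsterclass$, to a separable model of $T_A$; such a submodel is then elementary by model completeness. This produces $M_a, M_b \prec \monsterclass$ containing $\bar a, \bar b$ respectively, and the task reduces to producing an elementary isomorphism $\tilde g\colon M_a \to M_b$ extending $g$.

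To construct $\tilde g$, I would use Lascar's theorem as cited. Pick a small elementary substructure $M_0 \prec \monsterclass$; by the strong $\kappa$-homogeneity of $\monsterclass$ and suitable non-forking extensions, relocate $M_a$ and $M_b$ so that both contain $M_0$ elementarily and are independent over $M_0$, while $g$ still relates the relocated copies of $\langle\bar a\rangle, \langle\bar b\rangle$. If one can then produce $\tilde g\colon M_a \to M_b$ extending $g$ and fixing $M_0$, Lascar's theorem guarantees that $\tilde g$ together with the identity of $M_b$ is elementary on $M_a \cup M_b$, so that $\type(\bar a) = \type(\bar b)$; strong homogeneity of $\monsterclass$ then produces an automorphism sending $\bar a$ to $\bar b$.

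The hard part will be the amalgamation arranged in the previous paragraph: namely, locating $M_a$ and $M_b$ inside $\monsterclass$ so that they share a common elementary substructure $M_0$, are independent over it, and are matched by an isomorphism extending the original $g$. This requires constructing sufficiently generic extensions of $\langle\bar a\rangle, \langle\bar b\rangle$ through repeated use of the functional Rokhlin lemma, so that the canonical non-forking amalgamation furnished by the stability of $T_A$ (which will be established independently via the characterization of independence for $T_A$) can carry out the matching while preserving $g$.
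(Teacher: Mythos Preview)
Your application of Lascar's theorem does not match its hypotheses: the theorem concerns a pair of \emph{automorphisms} $\alpha$ of $M_1$ and $\beta$ of $M_2$, not an isomorphism $\tilde g\colon M_a \to M_b$ glued with $\id_{M_b}$. Worse, if you already had an $L_\sigma$-isomorphism $\tilde g\colon M_a \to M_b$ between two elementary submodels of $\monsterclass$, it would be elementary automatically (both are models of the complete theory $T_A$), and $\type(\bar a)=\type(\bar b)$ would follow at once --- so your invocation of Lascar adds nothing, and the entire content has been pushed into the ``hard part'' of constructing $\tilde g$, which you never actually carry out. Your sketch of that step appeals to the stability of $T_A$, but in this paper stability is proved \emph{after} and \emph{using} quantifier elimination (both the characterisation of $\sigma$-types and the extension property for $\sigmind$ rely on QE for $T_A$), so the argument is circular as written.

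The paper's route avoids all of this by working one level down. It reduces QE to amalgamation for $(T_\sigma)_\forall$ and carries out that amalgamation inside a universal model of the \emph{base} theory $\alpl$, whose stability and quantifier elimination are already known. Given $E_1,E_2 \supseteq A$ placed forking-independently over $A$ in that monster, Lascar's theorem is applied with $\alpha=\sigma_1$ and $\beta=\sigma_2$ --- the distinguished automorphisms themselves, which are genuine $\lBL$-automorphisms of $E_1$ and $E_2$ agreeing on $\acl A$. The conclusion is that $\sigma_1\cup\sigma_2$ is $\alpl$-elementary on $E_1\cup E_2$, hence extends to an automorphism $\tau$ of the lattice $E$ they generate, yielding the amalgam $(E,\tau)$. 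The idea you are missing is precisely this shift of ambient theory: Lascar is used in $\alpl$ to glue the two automorphisms $\sigma_i$, not in $T_A$ to glue types.
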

	\begin{proof}
		As in classical first order logic, a model complete theory $T$ has quantifier elimination if and only if its universal part $T_{∀}$ has the amalgamation property. 
		By \cite[Lemma~6.24]{claoc}, if $T^*$ is the model companion of $T$, then their universal parts coincide, so in our case we just need to show that $(T_{σ})_{∀}$ has the amalgamation property. 
		
		As the models of $T_{∀}$ are precisely the substructures of models of $T$, 
		it suffices to check that, given models $(E_1,σ_1)$ and $(E_2,σ_2)$ of $T_{σ}$, and embeddings of $£L_{σ}$ structures $f_i\colon A \hookrightarrow E_i$ from a model $(A,σ)$ of $(T_{σ})_{∀}$, there is an $L^p$ lattice with automorphism $(E,τ)$ and $£L_{σ}$-embeddings $g_i\colon E_i \to E$ making the following diagram commute. 
		\begin{equation*}
			\begin{tikzcd}[sep = small]
				& E_1,σ_1 \arrow[rd,hook,"g_1"] \\
				A,σ \arrow[ru,hook,"f_1"] \arrow[rd,hook,"f_2"'] & & E,τ\\
				& E_2,σ_2 \arrow[ru,hook,"g_2"']
			\end{tikzcd}
		\end{equation*}
		
		Let $\mathbb{M}$ be a sufficiently universal model of $\alpl$. Since \alpl\ is stable, we may assume that $A ⊆ E_1,E_2 ⊆ \mathbb{M}$ with $σ_1\restr A = σ_2 \restr A = σ$, and that $E_1$ is (forking-)independent of $E_2$ over $A$. In particular, $E_1 ∩ E_2 = \acl A $, but it follows from \cite[Fact~3.11 and Lemma~3.12]{lpIndep} that this is precisely the Banach lattice generated by $A$. Therefore $σ$ extends uniquely to a Banach lattice automorphism of $\acl A$, which allows us to assume that $A$ is algebraically closed.
		
		At this point, we can proceed as in \cite[Thereom~3.3]{lascar} to show that the map $σ_1 ∪ σ_2$ on $E_1 ∪ E_2$ is elementary, so we can conclude by extending $σ_1 ∪ σ_2$ to an automorphism $τ$ of the $L^p$ lattice $E$ generated  by $E_1 ∪ E_2$.
	\end{proof}
	
	As the  only constant in the language $L_{\mathring{BL}}$ is $0$, which is fixed by all functions in $T_{A}$, we have the following corollary.
	
	\begin{corol}
		$T_A$ is complete.
	\end{corol}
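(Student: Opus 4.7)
The plan is to deduce completeness directly from the quantifier elimination established in \Cref{qe of TA}. By that result, every $£L_{σ}$-sentence is $T_A$-equivalent to a quantifier-free sentence, and in continuous logic a quantifier-free sentence is a continuous combination of basic formulas of the form $\norm{t}$ with $t$ a closed term of $£L_{σ}$. Its value in any model $M \models T_A$ is therefore determined by the interpretations of these closed terms.

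Next I would remark that every closed term of $£L_{σ}$ evaluates to $0$ in every model. The only constant symbol of $£L_{\mathrm{BL}}$ is $0$, and each remaining function symbol fixes $0$: we have $-0 = 0$, $\frac{0+0}{2} = 0$, $\abs{0} = 0$, and $σ(0) = 0$ by linearity of $σ$. A straightforward induction on term complexity then yields $t^M = 0$ for every closed term $t$, so every quantifier-free sentence takes a single common value $F(0,\dots,0)$ (where $F$ is the continuous connective defining it) in every model of $T_A$.

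Putting the two observations together, every $£L_{σ}$-sentence has the same value in every model of $T_A$, which is exactly completeness. No genuine obstacle arises: the argument is purely syntactic, reflecting the absence of non-trivial closed terms in the language.
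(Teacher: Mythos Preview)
Your proposal is correct and follows essentially the same approach as the paper: completeness is derived from quantifier elimination together with the observation that $0$ is the only constant of $£L_{σ}$ and is fixed by every function symbol, so all closed terms evaluate to $0$. The paper states this in a single sentence; you have simply spelled out the details of the induction on terms and the structure of quantifier-free sentences.
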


	\subsection{Independence and stability} In this section we show that $T_A$ is stable using an argument similar to the one presented in \cite{chatpill}, but instead of showing the independence theorem to prove that $T_A$ is simple, we will prove that $T_A$ admits a stationary relation of independence, which implies that $T_A$ is actually stable.

	Let $(£U, σ)$ a large universal model of $T_A$. We shall write $σ^{\zz}a$ to mean $(σ^i a)_{i∈\zz}$, and similarly, $σ^{\zz} A$ is shorthand for $§{σ^i u : u ∈ A, i ∈ \zz}$
	
	\begin{lemma}\label{sigma-sametype char}
		Let $\seq{a}$ and $\seq{b}$ be two tuples of the same lengths in $£U$, and $C$ a small subset of $£U$. Then $\seq{a}$ and $\seq{b}$ have the same type over $C$ in the sense of $(£U,σ)$ if and only if $σ^{\zz}\seq{a}$ and $σ^{\zz}\seq{b}$ have the same type over $σ^{\zz}C$ in the sense of $£U$.
	\end{lemma}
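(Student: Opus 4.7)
I will prove the two implications separately. A preliminary observation, which drives both directions, is that the morphicity axioms of $T_σ$ let me push every occurrence of $σ$ inside an $£L_σ$-term down to the variables: every $£L_σ$-term $t(\seq{x})$ is $T_σ$-equivalent to an $\lBL$-term applied to $σ^{k_i}x_{j_i}$ for some non-negative integers $k_i$. Consequently every quantifier-free basic $£L_σ$-formula is equivalent to a quantifier-free $\lBL$-formula in finitely many such $σ$-shifts of its free variables.

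For the forward direction, I take an $\lBL$-formula $ψ$ evaluated on a finite subtuple of $σ^{\zz}\seq{a}$ together with parameters from $σ^{\zz}C$. Using that $σ$ is an $\lBL$-automorphism—hence preserves the values of all $\lBL$-formulas—I shift all the exponents involved by a common integer so that they become non-negative; this does not change the value. I can then read the resulting expression as the value of an $£L_σ$-formula with parameters in $C$ evaluated at $\seq{a}$, which by hypothesis equals the corresponding value at $\seq{b}$.

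For the converse, I appeal to the quantifier elimination for $T_A$ just proved (\Cref{qe of TA}): every $£L_σ$-formula $φ(\seq{x},\seq{y})$ is, modulo $T_A$, a definable predicate in quantifier-free $£L_σ$-formulas, so by the preliminary observation it is an $\lBL$-definable predicate in finitely many $σ$-shifts of its variables. Evaluating $φ(\seq{a},\seq{c})$ for $\seq{c}∈C^{\card{y}}$ therefore reduces to evaluating an $\lBL$-definable predicate on a finite subtuple of $σ^{\zz}\seq{a}∪σ^{\zz}C$, and the two values for $\seq{a}$ and $\seq{b}$ agree by hypothesis.

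The only substantive step is the converse, and its main obstacle is exactly what \Cref{qe of TA} removes: without quantifier elimination, a formula of $£L_σ$ might quantify in ways entangled with $σ$ that are not detectable from the $\lBL$-type of the orbit $σ^{\zz}\seq{a}$. The forced-limit construct of continuous logic causes no additional difficulty, since the reduction to $\lBL$-form is preserved under uniform limits of definable predicates.
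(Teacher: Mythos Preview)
Your argument is correct and is essentially the paper's proof. The paper invokes quantifier elimination for both $T_A$ and $\alpl$ at the outset and then observes, exactly as you do, that a quantifier-free $£L_{σ}$-formula is an $\lBL$-formula in finitely many $σ$-shifts of its variables; the two directions then amount to reading the same expression in the two languages. Your presentation differs only in that for the forward implication you bypass quantifier elimination for $\alpl$ by noting that an arbitrary (possibly quantified) $\lBL$-formula in $σ$-shifts is already an $£L_{σ}$-formula after the common shift, whereas the paper reduces to quantifier-free $\lBL$-formulas first; and you spell out the shift by a common integer to make the exponents non-negative, which the paper leaves implicit in its ``repeat the reasoning in reverse''. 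These are cosmetic differences; the substance is the same.
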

	\begin{proof}
		Both \alpl\ and $T_A$ have quantifier elimination, so we just need to check the equality of quantifier-free types. Suppose that $σ^{\zz}\seq{a}$ and $σ^{\zz}\seq{b}$ have the same type over $σ^{\zz}C$ in the sense of $£U$, and let $φ(\seq{x},\seq{c})$ be a quantifier-free formula in $£L_{σ}(C)$ vanishing at $\seq{a}$. Then $φ$ is of the form $ψ(f_1 \seq{x}, …, f_k \seq{x}, g_1 \seq{c},…,g_m \seq{c})$, where the $f_i$'s and the $g_i$'s are powers of $σ$ an no other instance of $σ$ appears in $ψ$. This means that $ψ(\seq{y_1}, …, \seq{y_k}, \seq{g}\seq{c})$ belongs to $£L_{\mathrm{BL}}(σ^{\zz}C)$ and vanishes at $\seq{f} \seq{a}$, so it also vanishes at $\seq{f} \seq{b}$, which means that $φ(\seq{b}, \seq{c}) = 0$.
		For the converse, just repeat the same reasoning in reverse.
	\end{proof}
	
	In the following we will denote the type of $a$ over $C$ in the sense of $£U$ simply by $\type(a/C)$, and the respective type in the sense of $(£U,σ)$ by $\type^{σ}(a/C)$. We will also write $a \sametype[C]^{σ} b$ to mean $\type^{σ}(a/C) = \type^{σ}(b/C)$, and similarly without the $σ$.
	We will also denote by $\dcl A$ the definable closure of $A$ in the sense of \alpl, and set $\dcl_{σ}(A) = \dcl(σ^{\zz} A)$.
	
	\begin{lemma}
		$\dcl_{σ}(A)$ is the algebraic closure of $A$ in the sense of $T_A$.
	\end{lemma}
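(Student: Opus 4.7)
The plan is to show the two inclusions $\dcl_{σ}(A) ⊆ \acl_{T_A}(A)$ and $\acl_{T_A}(A) ⊆ \dcl_{σ}(A)$. The forward direction is immediate: each $σ^i a$ with $a ∈ A$ and $i ∈ \zz$ is $£L_{σ}$-definable from $a$, so $σ^{\zz}A ⊆ \dcl_{T_A}(A)$, and since the Banach lattice operations are $£L_{σ}$-definable while $\dcl_{T_A}(A)$ is closed under limits, one gets $\dcl(σ^{\zz}A) ⊆ \dcl_{T_A}(A) ⊆ \acl_{T_A}(A)$. For the reverse inclusion I argue contrapositively: assuming $b ∉ \dcl_{σ}(A)$, I construct a countably infinite family of realisations of $\type^{σ}(b/A)$ at uniformly positive pairwise distance, showing the realisation set is not totally bounded and hence $b ∉ \acl_{T_A}(A)$.

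Set $E_0 ≔ \dcl_{σ}(A)$, which is a $σ$-invariant $£L_{σ}$-substructure and, since $\dcl = \acl$ in \alpl, is also \alpl-algebraically closed. Let $E_1 ≔ \dcl^{σ}(A, b)$. Using the amalgamation construction from the proof of \Cref{qe of TA} with $E_0$ in place of $A$ as the algebraically closed base, I would build a family of $£L_{σ}$-isomorphic copies $(E_n, σ_n)$ of $(E_1, σ \restr E_1)$ over $E_0$, with $b_n$ the image of $b$, amalgamated into a common $£L_{σ}$-extension $(E, τ)$ so that the underlying \alpl-reducts of the $E_n$ are mutually \alpl-independent over $E_0$. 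This $(E, τ)$ models $(T_{σ})_{∀}$ and hence embeds in the $T_A$-monster. In it, each $b_n$ realises $\type^{σ}(b/A)$: the $£L_{σ}$-isomorphism $(E_n, σ_n) ≅ (E_1, σ \restr E_1)$ over $E_0$ sends $σ^{\zz}b_n$ to $σ^{\zz}b$ while fixing $σ^{\zz}A$, so by \Cref{sigma-sametype char}, $b_n \sametype[A]^{σ} b$.

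For $n ≠ m$ the \alpl-independence over $E_0$ together with $b ∉ E_0$ forces $b_n ≠ b_m$: otherwise $b_n ∈ \dcl(E_0, b_m)$ and $b_n \forkind[E_0] b_m$ in \alpl\ would give $b_n ∈ \acl(E_0) = E_0$, contradicting $b ∉ E_0$. By stationarity of $\type(b/E_0)$ over the \alpl-algebraically closed $E_0$, the sequence $(b_n)$ is \alpl-indiscernible over $E_0$, so the distances $d(b_n, b_m)$ coincide in a common positive value, yielding the required non-totally-bounded set of realisations. The main obstacle is performing the amalgamation in a $σ$-compatible fashion; this is resolved by the Lascar-type construction of \Cref{qe of TA}, invoked with the already \alpl-algebraically closed, $σ$-invariant base $E_0$ rather than $A$ itself.
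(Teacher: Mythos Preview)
Your argument is correct and follows essentially the same route as the paper, which simply points to the Chatzidakis--Pillay proof (their Lemma~3.6) and tells the reader to adapt it to continuous logic via the compactness characterisation of algebraicity. You have carried out precisely that adaptation: the amalgamation over the $\alpl$-algebraically closed, $σ$-invariant base $E_0$ is the same device used in \Cref{qe of TA}, and the passage from ``infinitely many distinct realisations'' to ``a non-totally-bounded set of realisations'' via indiscernibility of a Morley sequence is the expected continuous-logic replacement. Two small points worth tightening: make explicit that your copies are built iteratively so that $E_{n+1}$ is $\alpl$-independent from $E_1\cup\dots\cup E_n$ over $E_0$ (``mutually independent'' alone does not literally give a Morley sequence), and note that while $E_1=\dcl_{σ}(A,b)$ need not itself be a model of $T_{σ}$, it can be enlarged to one before invoking the amalgamation of \Cref{qe of TA}.
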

	\begin{proof}
		By \cite[Fact~3.11 and Lemma~3.12]{lpIndep}, $\dcl_{σ}(A) = \acl(σ^{\zz}A)$ is the Banach lattice generated by $σ^{\zz}A$. It is then clear that every element in it is algebraic over $A$ in the sense of $T_A$. For the converse, suppose $a$ is algebraic over $S = \dcl_{σ}(A)$ in the sense of $T_A$ and rewrite the proof of \cite[Lemma~3.6]{chatpill} using the characterisation \cite[Lemma~4.9]{cont-logic} of algebraic types in continuous logic.
	\end{proof}
	
	\begin{defin}
		Let $A$, $B$, and $C$ be small subsets of $£U$. We say that $A$ is \emph{$σ$-independent} of $B$ given $C$ if $\dcl_{σ}(AC)$ is forking independent of $\dcl_{σ}(BC)$ over $\dcl_{σ}(C)$. We will denote $σ$-independence by $\sigmind$.
	\end{defin}
	
	Notice that by \cite[Theorem~4.12]{lpIndep}, we have
	\begin{equation}\label{sigmind char}
		A \sigmind[C] B \iff σ^{\zz} A \ind[σ^{\zz} C] σ^{\zz} B.
	\end{equation}

	\begin{lemma}
		The relation $\sigmind$ satisfies the following properties, for arbitrary small subsets $A, B, C, D$ of $£U$.
		\begin{enumerate}
			\item Invariance under automorphisms of $(£U,σ)$.
			\item Symmetry: $A \sigmind[C] B$ if and only if $B \sigmind[C] A$.
			\item Transitivity: $A \sigmind[C] BD$ if and only if $A \sigmind[C] B$ and $A \sigmind[BC] D$.
			\item Finite character: $A \sigmind[C] B$ if and only if $\seq{a} \sigmind[C] B$ for all finite tuples $\seq{a} ⊆A$.
			\item Extension: there is $A' \sametype[C]^{σ} A$ such that $A' \sigmind[C] B$.
			\item Local character: for any finite tuple $\seq{a}$, there is a countable $B_0⊆B$ such that $\seq{a} \sigmind[B_0] B$.
			\item Stationarity: if $A \sametype[C]^{σ} D$, $A \sigmind[C] B$, and $D \sigmind[C] B$, then $A \sametype[BC]^{σ} D$.
		\end{enumerate}
	\end{lemma}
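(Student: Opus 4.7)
The plan is to establish each of the seven properties by transferring the corresponding property of forking independence $\ind$ in the stable theory $\alpl$ (as treated in \cite{lpIndep}) through the characterisation $A \sigmind[C] B \iff σ^{\zz} A \ind[σ^{\zz} C] σ^{\zz} B$ given in \eqref{sigmind char}, together with \Cref{sigma-sametype char}. Throughout, one should think of $σ^{\zz}$ as a completely formal operation on small sets, and remember that if $C$ is already closed under $\dcl$ and $σ$ then $σ^{\zz} C = C$.

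The first four properties are almost immediate transfers. Invariance holds because every $£L_σ$-automorphism $π$ of $(£U,σ)$ commutes with $σ$, hence $π(σ^{\zz} X) = σ^{\zz} π(X)$; invariance of $\ind$ in $\alpl$ then closes the argument. For symmetry and transitivity one uses the identities $σ^{\zz}(B ∪ C) = σ^{\zz} B ∪ σ^{\zz} C$ and $σ^{\zz}(BD) = σ^{\zz} B \cup σ^{\zz} D$, so that the relevant $\ind$-statements match perfectly. Finite character follows because every finite subtuple of $σ^{\zz} A$ is contained in $σ^{\zz} \seq a$ for some finite $\seq a ⊆ A$, hence finite character of $\ind$ in $\alpl$ yields finite character of $\sigmind$. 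For local character, apply local character of $\ind$ to $σ^{\zz} \seq a$ to obtain a countable $B'_0 ⊆ σ^{\zz} B$ with $σ^{\zz} \seq a \ind[B'_0] σ^{\zz} B$; then a countable $B_0 ⊆ B$ with $B'_0 ⊆ σ^{\zz} B_0$ works, since enlarging the base only strengthens independence.

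For extension we combine the extension property of $\ind$ in $\alpl$ with the saturation and strong homogeneity of $(£U, σ)$ as a universal model of $T_A$. The partial $£L_σ$-type over $BC$ consisting of $\type^σ(A/C)$ together with the requirement that $σ$-independence from $B$ over $C$ holds is, by \Cref{sigma-sametype char} and quantifier elimination for $T_A$ (\Cref{qe of TA}), equivalent to a partial $\alpl$-type over $σ^{\zz}(BC)$; extension in $\alpl$ shows this partial type is consistent in $£U$, and saturation of $(£U,σ)$ then realises it, producing the required $A'$. For stationarity, one first enlarges $C$ to $\dcl_σ(C)$, which is the algebraic closure of $C$ in $T_A$ by the preceding lemma and is a Banach sublattice of $£U$; the hypotheses pass to this enlargement, and then stationarity of $\ind$ in $\alpl$ over $\dcl$-closed parameter sets (available because $\alpl$ is stable with quantifier elimination, so types over such sets are determined by the $\ind$-data) gives $σ^{\zz} A \sametype[σ^{\zz}(BC)] σ^{\zz} D$, which pulls back via \Cref{sigma-sametype char} to the desired $A \sametype[BC]^σ D$.

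The main obstacle I anticipate is the extension clause: an $\alpl$-non-forking extension of $\type(σ^{\zz} A / σ^{\zz} C)$ to $σ^{\zz}(BC)$ is a tuple in $£U$ that need not carry a compatible $σ$-shift structure, so one cannot directly define $A'$ as the first coordinate of such a realisation. The resolution, as sketched above, is to package the requirement as a partial $£L_σ$-type and invoke saturation of $(£U,σ)$ itself rather than of its $\alpl$-reduct, which is why the use of \Cref{sigma-sametype char} and quantifier elimination for $T_A$ is essential at this step.
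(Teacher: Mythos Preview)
Your treatment of invariance, symmetry, transitivity, finite character, local character, and stationarity is correct and essentially matches the paper. The only cosmetic differences are that the paper handles local character by applying $\ind$-local character to the finite truncations $σ^{\ccint{-n,n}}\seq a$ and taking a union (rather than to the countable tuple $σ^{\zz}\seq a$ directly), and that the paper invokes stationarity of $\ind$ in $\alpl$ over $σ^{\zz}C$ without first passing to $\dcl_σ(C)$; neither difference is substantive.

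The extension clause, however, has a genuine gap---one you yourself flag but do not actually close. You argue that the partial $£L_σ$-type $π$ expressing ``$\type^σ(A/C)$ together with $σ$-independence from $B$ over $C$'' corresponds, via \Cref{sigma-sametype char} and quantifier elimination for $T_A$, to an $\alpl$-type $π'$ in variables $(y_i)_{i∈\zz}$ over $σ^{\zz}(BC)$, and that extension in $\alpl$ makes $π'$ consistent. That much is fine. But consistency of $π'$ only says that some $\zz$-indexed tuple in $£U$ realises it; it does \emph{not} say that $π$ is consistent with $T_A$, i.e., that some element of a model $(M,τ)\models T_A$ extending the $£L_σ$-structure generated by $BC$ realises $π$. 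Invoking saturation of $(£U,σ)$ to realise $π$ presupposes exactly this $T_A$-consistency, so the argument is circular at the decisive step.

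The paper closes this gap with an explicit amalgamation. One uses extension in $\alpl$ to find $Φ∈\aut(£U/σ^{\zz}C)$ with $Φ(σ^{\zz}A)\ind[\dcl_σ C]σ^{\zz}B$, sets $A_0=Φ(A)$ and $σ_0=ΦσΦ^{-1}$, and observes that $σ_0$ agrees with $σ$ on $σ^{\zz}C$. Thus $(σ_0^{\zz}A_0,σ_0)$ and $(σ^{\zz}B,σ)$ are two $£L_σ$-structures over the common substructure $(σ^{\zz}C,σ)$; the amalgamation property for $(T_σ)_∀$ established in the proof of \Cref{qe of TA} places both inside a single model $(E,τ)\models T_A$, in which $A_0$ witnesses the consistency of $π$ with $T_A$. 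Only then does saturation of $(£U,σ)$ produce the desired $A'$. This amalgamation step is precisely the ingredient your argument is missing.
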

	\begin{proof}
		Invariance, symmetry and transitivity follow immediately from the equivalence \eqref{sigmind char}. For the finite character, notice that if $A \sigmind[C] B$, then, by monotonicity of $\ind$, we have $σ^{\zz}\seq{a} \ind[σ^{\zz} C] σ^{\zz} B$, that is, $\seq{a} \sigmind[C] B$, for any finite tuple $\seq{a} ⊆A$. Conversely, if $\seq{a} \sigmind[C] B$, for any finite tuple $\seq{a} ⊆A$, then by monotonicity and finite character of $\ind$, we have $σ^{\zz}\seq{a} \ind[σ^{\zz} C] σ^{\zz} B$ and thus $A \sigmind[C] B$.
		
		For extension of $\sigmind$, use the corresponding property of $\ind$ to find some $Φ ∈ \aut(£U/σ^{\zz}C)$ such that $Φ(σ^{\zz} A) \sigmind[\dcl_{σ}C] σ^{\zz} B$ and define $A_0 = Φ(A)$ and $σ_{0} = ΦσΦ^{-1}$, so that $σ_0^{\zz} A_0 \sigmind[\dcl_{σ}C] σ^{\zz} B$. Using the amalgamation property shown in the proof of \Cref{qe of TA}, $(σ_0^{\zz}A_0,σ_0)$ and $(σ^{\zz}B,σ)$ are contained in a model $(E,τ)$ of $T_A$, and thus, by saturation of $(£U,σ)$, we can find $A' \sametype[C]^{σ} A$ that is $σ$-independent of $B$ over $C$.
		
		For local character, suppose $\seq{a}$ is a finite tuple and for each $n∈\zz_{>0}$, use the corresponding property of $\ind$ to find a countable subset $B_n⊆ σ^{\zz} B$ such that $σ^{\ccint{-n,n}} \seq{a} \ind[B_n] σ^{\zz} B$. By finite character and transitivity,  we have $σ^{\zz} \seq{a} \ind[σ^{\zz} \bigcup_n \! B_n] σ^{\zz} B$. Therefore, the set $B_0 = \bigcup_n\! B_n ∩ B$ is countable and $\seq{a} \sigmind[B_0] B$.
		
		Finally, stationarity follows immediately from \Cref{sigma-sametype char} and \eqref{sigmind char}. In fact, the assumptions are equivalent to $σ^{\zz} A \sametype[σ^{\zz}C] σ^{\zz} D$,  $σ^{\zz}A \ind[σ^{\zz} C] σ^{\zz} B$, and $σ^{\zz}D \ind[σ^{\zz} C] σ^{\zz} B$, so by stationarity of $\ind$, we have $σ^{\zz} A \sametype[σ^{\zz}(CB)] σ^{\zz} D$, that is, $A \sametype[CB]^{σ} D$.
	\end{proof}
	
	\begin{theo}
		$T_A$ is stable and $σ$-independence coincide with forking independence.
	\end{theo}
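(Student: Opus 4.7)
The plan is to invoke a Kim--Pillay-style criterion adapted to continuous logic: a theory admitting an Aut-invariant ternary relation that is symmetric, transitive, of finite character, and satisfies extension, local character, and stationarity must be stable, and that relation must then coincide with non-forking independence. The previous lemma established exactly these properties for $\sigmind$, so both halves of the theorem will follow once this criterion is applied.

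Concretely, I would first derive stability by counting types. Given a small parameter set $C$ and a finite tuple $\seq{a}$, local character produces a countable $C_0 \subseteq C$ with $\seq{a} \sigmind[C_0] C$. By stationarity, $\type^{σ}(\seq{a}/C)$ is then the unique $\sigmind$-extension to $C$ of $\type^{σ}(\seq{a}/C_0)$. Since there are at most $|C|^{\aleph_0}$ countable subsets of $C$, and at most $2^{\aleph_0}$ types over any fixed countable set in a separable language, one gets $|S_n(C)| \leq |C|^{\aleph_0}$, yielding $λ$-stability whenever $λ^{\aleph_0} = λ$; in particular $T_A$ is stable.

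Once stability is known, non-forking independence $\forkind$ in $T_A$ is itself an Aut-invariant, symmetric, transitive ternary relation satisfying finite character, extension, local character, and stationarity over algebraically closed sets, and it is the unique such relation. As $\sigmind$ satisfies all of these properties --- indeed stationarity over arbitrary parameter sets, which is strictly stronger than what the criterion requires --- we conclude that $A \sigmind[C] B$ holds if and only if $A \forkind[C] B$.

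The only point requiring care is the correct invocation of the continuous-logic Kim--Pillay criterion, which demands stationarity over models or algebraically closed sets. The identification $\dcl_{σ}(A) = \acl_{T_A}(A)$ from the preceding lemma ensures that our stationarity statement, formulated over arbitrary $C$, specialises to the form needed by the criterion, so the appeal is clean and no further combinatorial work is required.
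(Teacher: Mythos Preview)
Your proposal is correct and follows essentially the same route as the paper: both arguments appeal to the continuous-logic Kim--Pillay criterion (the paper cites \cite[Theorems~1.51 and 2.8]{simplicity}), using the properties of $\sigmind$ established in the preceding lemma to conclude stability and the identification with non-forking. The type-counting sketch you include is an accurate unpacking of why that criterion yields stability, but it is not a genuinely different strategy---the paper simply invokes the reference without unfolding it.
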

	\begin{proof}
		It follows from the previous lemma and \cite[Theorems~1.51,2.8]{simplicity}~.
	\end{proof}

	\section{Types and their dynamical properties}\label{section: types}

	Let $S_n(T)$ denote the set of (complete) $n$-types in $T$ over the empty set. Recall that a formula $φ$ induces a function $\bar{φ}\colon S_n(T)\to \rr$ defined by assigning to each type $€p ∈ S_n(T)$ the unique $r∈\rr$ such that $φ(\seq{x}) -r $ belongs to $€p$. The initial topology with respect to the collection of the $\bar{φ}$'s, i.e., the coarsest topology that makes these functions continuous, is called the \emph{logic topology}, and renders $S_n(T)$ a compact Hausdorff space, see \cite[Theorem~7.5]{claoc}. Moreover, the sets $\doublesqbr{φ > 0} = §{€p∈S_n(T) : \bar{φ}(€p) >0}$, with $φ$ varying among the the basic formulas, form a basis for this topology.
	
	When $T$ is a complete theory, any two types $€p,€q∈ S_n(T)$ are realised in a universal model $£U$ of $T$, so we can define
	\begin{equation*}
		d(€p,€q) = \inf§{ d_\infty(\seq{a}, \seq{b}): \seq{a}, \seq{b} ∈ £U^n, \seq{a} \models €p, \seq{b} \models €q},
	\end{equation*}
	where $d_\infty(\seq{a}, \seq{b}) = \max_{i<n} d(a_i,b_i)$ and $d$ is the distance in $£U$.
	In \cite[Section~4.3]{cont-logic} it is shown that the newly defined $d$ is a complete metric on $S_n(T)$ that refines the logic topology. Furthermore, the two topologies coincide precisely when $T$ is separably categorical. This means that in the case of our theory $T_A$ the metric is strictly stronger than the logic topology, at least globally. We will later see what happens at a local level.
	
	In the following we will use terms typically associated to metric spaces to refer to the type metric, while the other topological terms, such as open and closed sets, neighbourhoods, interiors, 	
	unless otherwise specified.

	\begin{obs}
		Let $€p$ and $€q$ be type of a model of $T_A$ such that $\norm{x^-}*{}^{€p} = \norm{x^-}*{}^{€q} = 0$.
		For every $n<ω$ we have
		\begin{equation}\label{lower bound on type distance}
			d(€p,€q) ⩾ \abs{ 
				\norm{x}^{€p} - \norm{x}^{€q} + \norm{x \meet σ^n x}^{€p}- \norm{x \meet σ^n x}^{€q} 
			}[Big] ,
		\end{equation}
		because, for all $a \models €p$ and $b\models €q$,
		\begin{align*}
			2\norm{x}^{€p} - 2\norm{x \meet σ^n x}^{€p} &= d(a,σ^n a) \\
			&⩽ d(a,b) + d(b,σ^n b) + d(σ^n b, σ^n a) \\
			&= d(a,b) + 2\norm{x}^{€q} - 2\norm{x \meet σ^n x}^{€q} + d(b, a),
		\end{align*}
		where the equalities follow from the general identity $d(\abs{x}, \abs{y}) = \norm{x} + \norm{y} - 2 \norm{ \abs{x} \meet \abs{y} }$. 
	\end{obs}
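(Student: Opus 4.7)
My plan is to produce the bound by fixing realisations $a \models €p$ and $b \models €q$ in the universal model, writing $d(a,σ^n a)$ in terms of quantities visible to the types, and bounding it via a triangle inequality through the pair $(b,σ^n b)$. Taking an infimum and then appealing to symmetry will deliver the absolute value on the right hand side.

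The key ingredient is the identity $d(\abs{x},\abs{y}) = \norm{x}+\norm{y}-2\norm{\abs{x}\meet\abs{y}}$, which is available in an $L^1$ lattice via $|u-v| = u\vee v - u\meet v$, $u\vee v + u\meet v = u+v$, and the disjointness of $u-u\meet v$ and $v-u\meet v$. The hypothesis $\norm{x^-}^{€p}=\norm{x^-}^{€q}=0$ forces every realisation of either type to be positive, so the identity specialises to $\norm{u-v} = \norm{u}+\norm{v}-2\norm{u\meet v}$ for positive $u,v$. Applying this with $u=a$, $v=σ^n a$ and using that $σ$ is an isometric lattice automorphism (so $\norm{σ^n a}=\norm{a}$ and $σ^n a \meet a$ has the same norm as $a\meet σ^n a$), I get
\begin{equation*}
d(a,σ^n a) = 2\norm{x}^{€p} - 2\norm{x\meet σ^n x}^{€p},
\end{equation*}
and the analogous formula with $b$ in place of $a$ and $€q$ in place of $€p$.

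I would then invoke the triangle inequality $d(a,σ^n a) \leq d(a,b) + d(b,σ^n b) + d(σ^n b,σ^n a)$, with the last term reduced to $d(a,b)$ by isometry of $σ^n$. Substituting the closed expressions from the previous step turns this into a linear inequality comparing the two type-visible quantities with $2d(a,b)$. Rearranging and taking the infimum over all pairs $(a,b)$ of realisations of $(€p,€q)$ gives one signed form of the claimed bound; swapping $€p$ with $€q$ yields the opposite sign, and the two combine into the absolute value. No step is expected to be delicate: the whole argument reduces to the triangle inequality paired with the $L^1$-lattice distance identity for positive elements, and the main work is just bookkeeping to ensure the $σ$-invariance of the norm and the meet is used in the right places.
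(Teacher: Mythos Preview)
Your proposal is correct and follows exactly the argument recorded in the remark itself: express $d(a,\sigma^n a)$ via the lattice identity $d(\abs{x},\abs{y})=\norm{x}+\norm{y}-2\norm{\abs{x}\meet\abs{y}}$ (using positivity of realisations), apply the triangle inequality through $b$ and $\sigma^n b$, reduce $d(\sigma^n b,\sigma^n a)$ to $d(a,b)$ by isometry, and then take the infimum and swap $€p,€q$ to get the absolute value. There is nothing to add.
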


	\begin{propos}
		$T_A$ is not $ω$-stable.
	\end{propos}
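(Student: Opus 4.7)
The plan is to exhibit uncountably many $1$-types in $S_1(T_A)$ at pairwise positive metric distance, which will contradict $d$-separability of $S_1(T_A)$ and hence $ω$-stability. The key estimate is the inequality from the preceding remark, specialised to $n = 1$: for any two types $€p$ and $€q$ of positive elements sharing the same value of $\norm{x}$,
\begin{equation*}
	d(€p, €q) \geq \abs{\norm{x \meet σ x}^{€p} - \norm{x \meet σ x}^{€q}}.
\end{equation*}

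First, for each parameter $r \in (0, 1/2)$ I would build a concrete model of $T_A$ containing a positive element of unit norm whose first correlation is an explicit function of $r$. Take the probability space $X_r = \{0,1\}^{\zz} × \uint$ endowed with the product of the Bernoulli measure $ν_r$ of parameter $r$ and Lebesgue measure $λ$ on $\uint$, together with the transformation $S_r = T × ρ_α$, where $T$ is the Bernoulli shift and $ρ_α$ is a fixed irrational rotation. The rotation factor excludes any non-trivial periodic orbit, so $S_r$ is aperiodic and measure-preserving; by the lemma following the Rokhlin axioms, $(L^p(X_r), \widetilde{S_r})$ is a model of $T_A$, and atomlessness of the lattice comes from the Lebesgue factor.

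Inside this model I would set $a_r = r^{-1/p}\, χ_{\{ω_0 = 1\} × \uint}$. The standard Bernoulli correlation computations give $\norm{a_r}^p = r^{-1} ν_r(ω_0 = 1) = 1$ and $\norm{a_r \meet \widetilde{S_r} a_r}^p = r^{-1} ν_r(ω_0 = ω_{-1} = 1) = r$, whence $\norm{a_r \meet \widetilde{S_r} a_r} = r^{1/p}$. Setting $€p_r = \type^{σ}(a_r)$, the inequality above then yields $d(€p_r, €p_{r'}) \geq \abs{r^{1/p} - r'^{1/p}}$, so $\{€p_r : r \in (0, 1/2)\}$ is an uncountable metrically discrete family in $S_1(T_A)$.

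I do not anticipate any serious obstacle: only the $n = 1$ case of the remark is invoked, and the remaining verifications (aperiodicity and non-singularity of $S_r$, atomlessness, and the Bernoulli correlations) are entirely routine. An alternative would be to realise the partial types $\{x \geq 0,\, \norm{x} = 1,\, \norm{x \meet σ x} = r^{1/p}\}$ directly in a saturated model by checking their consistency with $T_A$, but the explicit product construction above is more transparent.
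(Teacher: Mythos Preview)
There is a genuine gap. The lower bound $d(€p_r,€p_{r'}) \geq \lvert r^{1/p} - r'^{1/p}\rvert$ does \emph{not} make $\{€p_r : r \in (0,\tfrac12)\}$ metrically discrete: it tends to $0$ as $r' \to r$, and is therefore perfectly compatible with the $€p_r$ forming a continuous arc in $S_1(T_A)$. In fact they do. Realise all of them in the single model $L^p\bigl([0,1]^{\zz} \times \uint\bigr)$ with automorphism $(\text{shift})\times ρ_α$ by setting $b_r = r^{-1/p} χ_{\{x_0 ⩽ r\} \times \uint}$; the coordinates $x_n$ are i.i.d.\ uniform, so the events $\{x_n ⩽ r\}$ are i.i.d.\ Bernoulli$(r)$ and $b_r$ has the same type as your $a_r$, while a direct estimate gives $\norm{b_r - b_{r'}} \to 0$ as $r' \to r$. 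Hence $r \mapsto €p_r$ is metric-continuous, its image is a separable subset of $S_1(T_A)$, and no contradiction with $ω$-stability follows.

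What is missing is a \emph{uniform} positive lower bound on the pairwise distances, and fixing $n=1$ cannot produce one: any single quantifier-free invariant such as $\norm{x \meet σ x}$ varies continuously along a one-parameter family of this sort. The paper instead exploits the freedom in $n$. It takes $u = χ_{[0,1/2]}$ in $(L^p(λ),σ_α)$ for irrational $α$, and for each pair $α,β$ with $1,α,β$ linearly independent over $\qq$ uses Kronecker's theorem to find $n$ with $nα$ close to an integer and $nβ$ close to a half-integer, so that $\norm{x \meet σ^n x}^{€p_α} \approx \tfrac12$ while $\norm{x \meet σ^n x}^{€p_β} \approx 0$; the remark applied with this pair-dependent $n$ then yields $d(€p_α,€p_β) ⩾ \tfrac12$. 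Your Bernoulli construction cannot be repaired in this direction, since $\norm{a_r \meet σ^n a_r} = r^{1/p}$ for every $n ⩾ 1$.
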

	\begin{proof}
		We apply the same idea of \cite[Lemma~3.3]{perturbations} to show that there is an uncountable set of $1$-types such that the distance between any two of them is at least $1/2$.
		For every irrational $α>0$, consider the automorphism $σ_{α}$ of $L^p(λ)$ induced by the rotation of $α$ (as defined in \eqref{def rotation}), so that $(L^p(λ),σ_{α})$ is a model of $T_A$, and let $€p_{α}$ be the type of $u = χ\lowersub{\ccint{0,1/2}*}$ therein.
		
		Let $α$ and $β$ be irrational and linearly independent over the rationals. For any $ε>0$, we can find positive integers $n, k, m$ such that $\abs{nα-k} <ε$ and $\abs{n β - m - \frac{1}{2}} <ε$. This means that, up to an error $2ε$, the power $σ_{α}^n$ takes $u$ back to itself, while $σ_{β}^n$ takes $u$ to the other half of $\uint$, that is, $\norm{x \meet σ^n x}^{€p_{α}} > \frac 12 - 2ε$ and $\norm{x \meet σ^n x}^{€p_{β}} < ε$. By \eqref{lower bound on type distance}, we have $d(€p_{α},€p_{β}) > \frac 12 - 3ε$ and thus $d(€p_{α},€p_{β}) ⩾ \frac 12$.
	\end{proof}
	
	We shall now recall the notion of isolation for types of a complete theory $T$ over a countable language.
	
	\begin{defin}
		A type $€p∈S_n(T)$ is \emph{isolated} if for all $r >0$ the ball $B(€p,r)$ contains $€p$ in its topological interior: $€p∈B(p,r)^\circ$ (i.e., the metric and the topology coincide at $€p$).
	\end{defin}
	
	Notice that a type $€p$ is isolated if and only if every net of types topologically converging to $€p$ is also metrically convergent to $€p$. 
	Ryll-Nardzewski Theorem for continuous logic gives a characterisation of separable categoricity in terms of isolated types, namely $T$ is separably categorical if and only if, for every $n$, all $n$-types are isolated. Another interesting fact about isolated types is the following.

	\begin{fact}[{\cite[Corollary~10.10]{claoc}}]\label{omission vs isolation}
		A type $€p∈ S_n(T)$ can be omitted if and only if it is not isolated.
	\end{fact}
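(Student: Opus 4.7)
The plan is to prove both directions of the equivalence, following the pattern of the classical omitting types theorem adapted to the metric setting.

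For the forward direction (isolated implies not omissible), I would show that an isolated $\mathfrak{p}$ is realized in every model $M \models T$ by a Cauchy-sequence argument inside $M$. Pick nested basic open neighbourhoods $U_n \ni \mathfrak{p}$ with $U_n \subseteq B(\mathfrak{p}, 2^{-n-1})$, each of the form $\{q : \bar\varphi_n(q) < c_n\}$ for some formula $\varphi_n$. Consistency of $\mathfrak{p}$ with $T$ together with completeness of $T$ imply that each $U_n$ is non-empty in $M$, so one can pick $\bar a_0 \in M^n$ with $\mathrm{tp}(\bar a_0) \in U_0$. Given $\bar a_n$ with $\mathrm{tp}(\bar a_n) \in U_n$, one has $d(\mathrm{tp}(\bar a_n), \mathfrak{p}) < 2^{-n-1}$, so in a monster model $\mathfrak{U} \succeq M$ some realization of $\mathfrak{p}$ lies within distance $2^{-n-1}$ of $\bar a_n$; by elementarity, applied to a suitable $\inf$-definable predicate combining $d(-, \bar a_n)$ and $\varphi_{n+1}$, one obtains $\bar a_{n+1} \in M^n$ with $\mathrm{tp}(\bar a_{n+1}) \in U_{n+1}$ and $d(\bar a_n, \bar a_{n+1}) < 2^{-n}$. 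The sequence $(\bar a_n)$ is Cauchy; its limit in $M$ (which is complete as a metric structure) realizes $\mathfrak{p}$.

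For the reverse direction (not isolated implies omissible), I would run a Henkin construction to produce a separable model in which no tuple realizes $\mathfrak{p}$. Fix $r > 0$ such that every basic open neighbourhood of $\mathfrak{p}$ contains a type at distance at least $r$ from $\mathfrak{p}$, and enumerate the tuples of Henkin constants $(\bar c_k)_k$ together with the countably many basic conditions to be decided. At each stage $k$, given the current finite Henkin theory $\Sigma$, if the conditions in $\Sigma$ on $\bar c_k$ still allow $\mathrm{tp}(\bar c_k) = \mathfrak{p}$, they carve out a basic open neighbourhood $V$ of $\mathfrak{p}$ in $S_n(T)$ that contains, by non-isolation, some $q$ with $d(q, \mathfrak{p}) \geq r$. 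Pick a basic formula $\psi$ and a constant $c$ with $\bar\psi(q) < c < \bar\psi(\mathfrak{p})$, and add the condition $\psi(\bar c_k) < c$ to $\Sigma$: this remains consistent (realized by $q$) and permanently forbids $\mathrm{tp}(\bar c_k) = \mathfrak{p}$. Completing the Henkin construction yields a separable model in which every tuple avoids $\mathfrak{p}$, i.e., $\mathfrak{p}$ is omitted.

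The main obstacle lies in direction (1): one must bridge the a priori disparate logic topology and type metric. The crucial tool is the $\inf$-definable predicate $\inf_{\bar y}\max(d(\bar y,\bar a_n),\varphi_{n+1}(\bar y))$, which converts the global fact that $\mathrm{tp}(\bar a_n)$ is metrically close to $\mathfrak{p}$ into a local definable statement about $M$; elementarity then ports this from the monster to $M$, and completeness of $M$ as a metric structure supplies the limit of the resulting Cauchy sequence.
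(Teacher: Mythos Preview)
The paper does not prove this statement at all; it is quoted as a \texttt{Fact} with a bare citation to \cite[Corollary~10.10]{claoc}, so there is no argument in the paper to compare yours against. Your forward direction is essentially the standard argument and is fine once the bookkeeping with the auxiliary $\inf$-predicate is done carefully (it is cleanest to use the characterisation that $\mathfrak p$ is isolated iff the predicate $d(\bar x,\mathfrak p)$ is definable, and then run the Cauchy-sequence argument with that single predicate rather than a new $\varphi_{n+1}$ at each stage).

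There is, however, a genuine gap in your reverse direction. In the continuous Henkin construction the model you build is the \emph{metric completion} of the set of Henkin constants, so an arbitrary $n$-tuple of the model is only a limit of tuples of Henkin constants, not itself such a tuple. Your step ``add $\psi(\bar c_k)<c$ with $\bar\psi(q)<c<\bar\psi(\mathfrak p)$'' does forbid $\type(\bar c_k)=\mathfrak p$, but it does \emph{not} bound $d(\type(\bar c_k),\mathfrak p)$ below by any fixed positive number: the separating formula $\psi$ and the margin $\bar\psi(\mathfrak p)-c$ depend on $k$, and nothing prevents $d(\type(\bar c_k),\mathfrak p)\to 0$. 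In that case a limit tuple in the completed model can perfectly well realise $\mathfrak p$, and your construction fails to omit it.

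The fix is to strengthen what you force at each stage. From non-isolation one gets (this is exactly \cite[Lemma~10.3]{claoc}, used elsewhere in the paper) an $r>0$ such that the closed ball $B[\mathfrak p,r]$ has empty interior; its complement is then dense open in $S_n(T)$. At stage $k$, the current conditions on $\bar c_k$ determine a non-empty open set $V$; intersect $V$ with the complement of $B[\mathfrak p,r]$, pick a basic open subset, and add the corresponding condition. This forces $d(\type(\bar c_k),\mathfrak p)>r$ \emph{uniformly in $k$}. Since the type map is $1$-Lipschitz for the metric, the bound $d(\type(\bar a),\mathfrak p)\geq r$ then passes to every tuple $\bar a$ in the completion, and $\mathfrak p$ is omitted.
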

	
	\begin{obs}
		In $T_A$, the type of $0$ is isolated. In fact, if $(€p_{α})_{α}$ is a net converging to $\type 0$ in the logic topology, then $\norm{x}^{€p_{α}}$ converges to $\norm{0} = 0$. Now, $d(€p_{α},\type 0) ⩽ \inf§{\norm{a_{α}} : a_{α} \models €p_{α}} = \norm{x}^{€p_{α}}$, hence the convergence in metric.
	\end{obs}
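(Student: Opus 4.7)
The plan is to use the characterisation right before the observation: a type is isolated when the metric and the logic topology agree at that point, equivalently, when every net $(€p_α)_α$ converging to $\type 0$ in the logic topology also converges to $\type 0$ in the type metric. So I would fix such a net and try to upper bound $d(€p_α, \type 0)$ by something whose logic-topological limit at $\type 0$ is $0$.

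The natural candidate is the formula $\norm{x}$. First, logic-topological convergence means that $\bar{φ}(€p_α) \to \bar{φ}(\type 0)$ for every formula $φ$; applied to $φ(x) = \norm{x}$, this gives $\norm{x}^{€p_α} \to \norm{0} = 0$. Second, $\type 0$ has the distinguished realisation $0$ in any model of $T_A$, so by the very definition of the metric on types,
\begin{equation*}
d(€p_α, \type 0) \;\leq\; \inf\bigl\{\,d(a_α, 0) : a_α \models €p_α\,\bigr\} \;=\; \inf\bigl\{\,\norm{a_α} : a_α \models €p_α\,\bigr\} \;=\; \norm{x}^{€p_α}.
\end{equation*}
Combining the two observations, $d(€p_α, \type 0) \to 0$, which is exactly metric convergence.

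There is essentially no obstacle here: the argument works because the predicate $\norm{x}$ simultaneously controls the metric distance to the constant $0$ and is continuous for the logic topology, and because the realisation $0$ of $\type 0$ is canonical. The only thing to be a little careful about is checking that taking the infimum over realisations $a_α$ of $€p_α$ in a sufficiently saturated universal model genuinely yields $\norm{x}^{€p_α}$; this is just the definition of the value of a formula at a type, but it is the one place a reader might want a line of justification.
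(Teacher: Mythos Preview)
Your proposal is correct and is essentially identical to the argument embedded in the remark itself: bound $d(€p_{α},\type 0)$ by $\norm{x}^{€p_{α}}$ using the canonical realisation $0$, and use logic-topological convergence to send this to $0$. There is nothing to add.
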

	
	\begin{lemma}\label{no isolated types}
		No non-trivial $1$-type of $T_A$ is isolated.
	\end{lemma}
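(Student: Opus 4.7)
The plan is to demonstrate non-isolation of $\mathfrak{p}$ directly: for every basic formula $\varphi$ and every $\varepsilon>0$, I shall exhibit a type $\mathfrak{q}$ with $\abs{\bar{\varphi}(\mathfrak{q})-\bar{\varphi}(\mathfrak{p})}<\varepsilon$ yet $d(\mathfrak{p},\mathfrak{q})\geq r$ for some fixed $r>0$ depending only on $\mathfrak{p}$. I realise $\mathfrak{p}$ by $a$ in an $\aleph_1$-saturated $(\mathbb{M},\sigma)\models T_A$; non-triviality forces $\norm{a}>0$, and replacing $a$ by its dominant signed part reduces to the case $a\geq 0$ with $\norm{a}=c>0$. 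Set $r:=c/4$. Any basic $\varphi$ involves only the translates $\sigma^{i}x$ for $|i|\leq n$, so by uniform continuity of $\varphi$ together with the separable categoricity of \alpl, it is enough to perturb the joint \alpl-type of $(\sigma^{-n}x,\dots,\sigma^{n}x)$ by less than some $\delta>0$ in the sum metric.

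The principal tool is the exact Rokhlin lemma (\Cref{exact rokh}). For $N\gg n$, pick $g\in\mathbb{M}$ so that $\{\band{\sigma^{i}g}:i<N\}$ partitions a $\sigma$-invariant band $B\supseteq\band{a}$, and decompose $a=\sum_{i<N}a_{i}$ with $a_{i}=a\restr\sigma^{i}g$. Let $\alpha:=\norm{a\meet\sigma^{N}a}^{\mathfrak{p}}$. A realisation $b$ of $\mathfrak{q}$ is built by surgery on this tower, branching on whether $\alpha\leq c/2$ or $\alpha>c/2$. If $\alpha\leq c/2$, I refine the tower to height $MN$ (with $M\gg 1$), exploiting that $\sigma^{N}\restr\band{g}$ is itself a lattice automorphism (since $\band{\sigma^{N}g}=\band{g}$, from \Cref{exact rokh}); an appropriate $b:=\sum_{i<MN}\sigma^{i}h$ then satisfies $\norm{b}=c$ and, by a direct $L^{p}$ computation, $\norm{b\meet\sigma^{N}b}=c(1-1/M)^{1/p}$, arbitrarily close to $c$. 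If $\alpha>c/2$, I enlarge the tower to height $2N$, use $\aleph_1$-saturation together with the amalgamation property of $(T_{\sigma})_{\forall}$ behind \Cref{qe of TA} to realise a copy of the joint \alpl-type of $(a_{0},\dots,a_{N-1})$ inside the first $N$ bands of the new tower, and set $b$ equal to this copy and $0$ on the remaining $N$ bands; then $\sigma^{N}b$ sits entirely in the complementary bands and $b\meet\sigma^{N}b=0$.

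In both branches only the pieces of $a$ near the boundary of the tower are disturbed, of total $L^{p}$-weight $O(c(n/N)^{1/p})$, so the joint \alpl-type of $(\sigma^{-n}b,\dots,\sigma^{n}b)$ differs in the sum metric from that of $(\sigma^{-n}a,\dots,\sigma^{n}a)$ by at most $\delta$ once $N$ is large. Uniform continuity then gives $\abs{\bar{\varphi}(\mathfrak{q})-\bar{\varphi}(\mathfrak{p})}<\varepsilon$. Meanwhile, applying \eqref{lower bound on type distance} with $n$ replaced by $N$ and using $\norm{b}=\norm{a}=c$,
\[
d(\mathfrak{p},\mathfrak{q})\geq\abs{\alpha-\norm{b\meet\sigma^{N}b}}\geq\tfrac{c}{2}-o_{N\to\infty}(1)\geq r,
\]
so $\mathfrak{p}$ fails to be isolated.

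The principal obstacle is the local-type matching in either branch. For $\alpha\leq c/2$, the element $h$ must be chosen so that the repetitive pattern $\sum_{i}\sigma^{i}h$ carries the correct within-band data of $a$; for $\alpha>c/2$, the prescribed joint \alpl-type $(a_{0},\dots,a_{N-1})$ must actually be realisable inside a fresh $2N$-tower. Both reduce to existence of joint types in \alpl{} with prescribed partial data and a global constraint, and follow from the stability and quantifier elimination of \alpl{} together with $\aleph_1$-saturation of $\mathbb{M}$. Once the joint local types are secured, the $O(c(n/N)^{1/p})$-perturbation estimate is a straightforward $L^{p}$ computation on the affected boundary bands.
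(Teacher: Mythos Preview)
Your approach is genuinely different from the paper's, which is considerably shorter: the paper exhibits two concrete models of $T_A$, namely $(L^p(\uint),ρ)$ with an irrational rotation and $(L^p(\rr),τ)$ with the unit translation, and shows via a simple dichotomy on $\liminf_n \norm{\,\abs{x}\meet σ^n\abs{x}\,}^{€p}$ that every non-trivial $1$-type is omitted in at least one of them; non-isolation then follows from \Cref{omission vs isolation}. No tower surgery is needed.

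Your proposal has the right intuition but a real gap at the step you yourself flag as the \enquote{principal obstacle}. In the branch $α\le c/2$ you write $b=\sum_{i<MN} σ^i h$ for a \emph{single} base element $h$. But then the local pattern $(σ^{-n}b,\dots,σ^n b)$ on the band $\band{σ^k g'}$ is $σ^k(h,\dots,h)$ up to boundary effects: it carries no information about the relative disposition of the components $a_i$ of $a$, only about the $\alpl$-type of a single element. There is no reason the $\alpl$-type of $(σ^{-n}a,\dots,σ^n a)$ should be realised by such a periodic orbit column. What actually works (and what the model-completeness proof does) is to realise the joint $\alpl$-type of the tuple $(σ^{-i}a_i:i<N)$ by elements $(\hat a_i)$ in the base band and then set $b=\sum_{m<M}\sum_{i<N} σ^{mN+i}\hat a_i$; this is \emph{not} of the form $\sum σ^i h$. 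Similarly, in the branch $α>c/2$, \enquote{realising the joint $\alpl$-type of $(a_0,\dots,a_{N-1})$} is not enough: the quantity you must match is $\norm{t(σ^{j}x:\abs{j}\le n)}$, which depends on the $\alpl$-type of $(σ^{-i}a_i)_i$ rather than that of $(a_i)_i$. Your appeal to \enquote{stability and quantifier elimination of \alpl{} together with $\aleph_1$-saturation} does not address either of these points; the actual work is the band-by-band bookkeeping of the model-completeness proof, including the pigeonhole shift of the tower to make the boundary contribution small. Finally, the reduction \enquote{replace $a$ by its dominant signed part} is not the right move: that changes the type whose isolation is in question. The correct reduction (as in the paper) is to keep $€p$ fixed and replace each occurrence of $x$ in the relevant formulas by $\abs{x}$.
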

	\begin{proof}
		Let $€p(x)$ be a non-trivial $1$-type of $T_A$. Then $\norm{x}^{€p}$ is positive, and up to rescaling, we may assume $\norm{x}^{€p} = 1$. We may also suppose that $\norm{x^-}*^{€p} = 0$, for if not we can just replace every occurrence of $x$ in this proof by $\abs{x}$. We shall distinguish two cases:
		\begin{enumerate}
			\item either $\lim_{n} \norm{x \meet σ^n x}^{€p} = 0$
			\item or $\liminf_{n} \norm{x \meet σ^n x}^{€p} > 0$.
		\end{enumerate}
		
		Let $ρ$ be the automorphism of $E_{\uint} = L^p(\uint,£B,λ)$ induced by an irrational rotation on the unit interval $\uint$, as defined in \eqref{def rotation}, and let $τ$ be the automorphism of $E_{\rr} = L^p(\rr,£B,λ)$ induced by a translation of step $-1$ on the real line, as defined in \eqref{def translation}. We have thus two models $(E_{\uint},ρ)$ and $(E_{\rr}, τ)$ of $T_A$.
		
		First notice that $u = χ\lowersub{\ccint{0,1}}$ in $(E_{\rr},τ)$ satisfies $\lim_{n} \norm{u \meet σ^n u} = 0$, while $v = χ\lowersub{\uint}$ in $(E_{\uint},ρ)$ satisfies $\lim_{n} \norm{v \meet σ^n v} = 1$,
		so both cases above may occur and we cannot discard any of them. Now, a type $€p$ in the first case cannot be realised in $(E_{\uint},ρ)$. In fact, if we call $α$ the irrational step of the rotation corresponding to $ρ$, then by Dirichlet's approximation theorem, we can find an increasing sequence $(n_i)_{i<ω}$ such that $n_i α$ is at most $1/n_i$ away from an integer. 
		This means that, by dominated convergence, $\lim_{i} \norm{u \meet ρ^{n_i} u} = \norm{u}$ for any $u ∈ E_{\uint}$, so no element of $E_{\uint}$ can realise $€p$.
		
		Suppose now we are in the second case above and $\liminf_{n} \norm{x \meet σ^n x}^{€p} = α > 0$. Let $u$ be a positive element of $E_{\rr}$ and let $k⩾ 2n+1$. Notice that 
		\begin{equation*}
			\norm{u \meet τ^k u} ⩽ \norm{u \restr {\ooint{-\infty,-n}}} + \norm{u \restr {\ooint{n,+\infty}}} = \norm{f - f \restr \ccint{n,n}} \to 0,
		\end{equation*}
		so $\lim_{n} \norm{u \meet τ^n u} = 0 $, contradicting $α>0$. This shows that $€p$ cannot be realised in $(E_{\rr}, τ)$. 
		
		In both cases, $€p$ can be omitted, so we can conclude using \Cref{omission vs isolation} that it is not isolated.
	\end{proof}
	
	A consequence of this is that $T_A$ does not admit atomic models, i.e., models that only realise isolated types.

	\subsection{Ergodic classification} %
	
	In classical ergodic theory, ergodic non-singular transformations are separated in different \emph{types} based on the existence of a finite or \tsigma-finite equivalent invariant measure.
	We will exploit the correspondence between non-singular transformations and automorphisms of $L^p$ lattices to present a similar classification for the lattices, and we will give a characterisation with a model-theoretic flavour of these types.
	Our main reference here will be \cite{danilenko}.
	
	A \emph{non-singular dynamical system} is an object $(X,£B, μ, τ)$ consisting of a standard Borel space equipped with a \tsigma-finite measure $μ$ and a non-singular transformation $τ$.

	\begin{defin}
		A non-singular system $(X,£B,μ,τ)$ is \emph{ergodic} if every $τ$-invariant $A ∈ £B$ is either negligible or has negligible complement.
	\end{defin}

	$τ$ is said to \emph{preserve a measure} $ν$ on $£B$ if $τ_*ν = ν$, that is, $ν(τ^{-1}A) = ν(A)$ for all $A ∈ £B$. In this case the measure $ν$ is said to be \emph{invariant under $τ$}. 
	
	\begin{defin}
		Suppose that the non-singular system $(X,£B,μ,τ)$ is atomless and ergodic. We say that it is of \emph{\kindii} if there $τ$ preserves a \tsigma-finite measure $ν$ on $£B$ that is equivalent to $μ$, otherwise we say that is of \emph{\kindiii}. When the system is of \kindii\ we make a further distinction: if $τ$ preserves a finite measure equivalent to $μ$ we say that it is of \emph{\kindiif}, otherwise we say that it is of \emph{\kindiin}.
	\end{defin}
	In the literature the term “type” is used instead of “kind” in this classification, but here we prefer to use the latter, so as to avoid confusion with logic types.
	
	\begin{defin}
		A measurable set $W$ is said to be \emph{wandering} if $W ∩ τ^i W = ∅$ for all $i∈\zz$. If we only require the existence of an infinite set $I⊆\nn$ such that $τ^i W ∩ τ^j W = ∅$ for all $i≠j$ in $I$, then $W$ is said to be \emph{weakly wandering}.
	\end{defin}
	
	It turns out that the absence of weakly wandering sets characterises the \kindiif.
	
	\begin{fact}[{\cite[Theorem~1]{hajian}}]\label{hajian-kakutani}
		A non-singular system is of \kindiif\ if and only if it does not admit weakly wandering sets.
	\end{fact}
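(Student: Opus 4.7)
The plan is to prove the two directions separately.

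\emph{Finite invariant implies no weakly wandering set.} If $τ$ preserves a finite measure $ν$ equivalent to $μ$, and $W$ is weakly wandering with infinite index set $I ⊆ \nn$, then by invariance and disjointness
\begin{equation*}
	\infty > ν(X) ⩾ ν\biggl(\bigsqcup_{i ∈ I} τ^i W\biggr) = \sum_{i ∈ I} ν(τ^i W) = \sum_{i ∈ I} ν(W),
\end{equation*}
forcing $ν(W) = 0$, and hence $μ(W) = 0$ by equivalence of $μ$ and $ν$.

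For the converse, the absence of weakly wandering sets is in particular the absence of ordinary wandering sets, which gives conservativity of $τ$. Hopf's theorem on conservative ergodic non-singular transformations then produces a \tsigma-finite $τ$-invariant measure $ν$ equivalent to $μ$, so the system is at least of \kindii. To exclude \kindiin, I would argue by contradiction: assume $ν(X) = \infty$ and construct a weakly wandering set of positive $ν$-measure.

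The construction would be inductive. Fix $B ⊆ X$ with $0 < ν(B) < \infty$. The plan is to build integers $0 = k_0 < k_1 < k_2 < \cdots$ together with a decreasing chain $B = B_0 \supseteq B_1 \supseteq B_2 \supseteq \cdots$ such that, at stage $n$, the translates $τ^{k_0} B_n, \ldots, τ^{k_n} B_n$ are pairwise disjoint. At step $n$, let $U_{n-1} = \bigsqcup_{i < n} τ^{k_i} B_{n-1}$, which has finite $ν$-measure; I would choose $k_n$ so large that $ν(τ^{k_n} B_{n-1} ∩ U_{n-1}) < ν(B)/2^{n+1}$, and then put $B_n = B_{n-1} \setminus τ^{-k_n}(U_{n-1})$. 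A direct check shows that $τ^{k_i} B_n$ for $i ⩽ n$ are pairwise disjoint, and by $ν$-invariance the losses $ν(B_{n-1} \setminus B_n)$ sum to less than $ν(B)$, so $W ≔ \bigcap_n B_n$ has positive $ν$-measure and is weakly wandering along the sequence $(k_n)_{n ⩾ 0}$.

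The principal obstacle is the selection of $k_n$: one must show that, for any two sets $A$ and $U$ of finite $ν$-measure, arbitrarily large $m$ exist making $ν(τ^m A ∩ U)$ smaller than any prescribed threshold. This is the genuine content of the argument and the place where $ν(X) = \infty$ is indispensable — a finite-measure set's orbit cannot remain trapped in a fixed finite-measure region forever. Concretely, I would rewrite this using invariance as $ν(A ∩ τ^{-m} U)$, and then apply a Fatou/pigeonhole argument on the \tsigma-finite exhaustion of $X$, after first invoking the Hopf ratio ergodic theorem to see that, because $ν(X) = \infty$, the orbit of $ν$-almost every point visits any fixed set of finite measure with zero density. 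Once this dissipation lemma is in hand, the inductive construction above runs routinely.
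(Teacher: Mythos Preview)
The paper does not prove this statement at all: it is recorded as a \emph{Fact} with a citation to Hajian--Kakutani and is used as a black box. So there is no proof in the paper to compare against; I can only comment on the correctness of your argument.

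Your forward direction is fine and standard. The converse, however, has a genuine gap at the step where you write: \emph{``Hopf's theorem on conservative ergodic non-singular transformations then produces a \tsigma-finite $τ$-invariant measure $ν$ equivalent to $μ$, so the system is at least of \kindii.''} No such theorem exists. Conservative ergodic non-singular transformations need not admit any \tsigma-finite equivalent invariant measure --- that is precisely the content of being of \kindiii, and such transformations do exist (Ornstein's construction, and many others surveyed in \cite{danilenko}). Hopf's decomposition theorem separates the space into conservative and dissipative parts, and Hopf's ratio ergodic theorem concerns averages once an invariant \tsigma-finite measure is already given; neither produces an invariant measure from conservativity alone. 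Your subsequent inductive construction of a weakly wandering set inside an infinite invariant measure is essentially a separate (and correct) theorem of Hajian--Kakutani, but it presupposes exactly what you have not established.

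The actual Hajian--Kakutani argument does not pass through a \tsigma-finite invariant measure. It builds the \emph{finite} invariant measure directly from the hypothesis ``no weakly wandering sets'': one shows this hypothesis is equivalent to a uniform boundedness condition on the orbit of $μ$ under $τ$ (roughly, that the measures $τ^n_*μ$ are ``weakly bounded''), and then a weak-$*$ compactness / averaging argument on the set of probability measures absolutely continuous with respect to $μ$ yields a $τ$-invariant accumulation point equivalent to $μ$. If you want to repair your approach, you would need to replace the appeal to Hopf by this kind of compactness argument, which is where the real work lies.
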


	An atomless $L^1$ lattice together with an automorphism is called a \emph{lattice system}. We will now define ergodic lattices and their kinds so that in the separable case they correspond to their respective measure-theoretical notions.
	
	\begin{defin}
		We say that a lattice system $(E,σ)$ is \emph{ergodic} if for all positive $u$ and $v$ in $E$, there is $n∈\zz$ such that $u$ and $σ^nv$ are disjoint.
	\end{defin}

	We say that $u$ and $v$ are \emph{compatible} if $u \restr v = v \restr u$. An element $u$ is said to be \emph{autocompatible} if for all $n<ω$, $u$ and $σ^nu$ are compatible.
	
	\begin{defin}
		$(E,σ)$ is of \kindii\ if it admits a positive autocompatible element, otherwise it is of \kindiii. When $(E,σ)$ is of \kindii, if it admits a fixed point, then it is of \kindiif, otherwise of \kindiin.
	\end{defin}

	Suppose now that $E$ is separable, then $(E,σ)$ can be identified with $(L^1(X,£B,μ),\widetilde{τ})$, with $(X,£B,μ)$ a standard atomless probability space, and $τ$ a non-singular transformation of it. Recall that 
	\begin{equation*}
		\widetilde{τ}(u) = \frac{\dd τ_*μ}{\dd μ}  ⋅ §(u \comp τ^{-1})
	\end{equation*}
	for each $u∈ E$.
	
	\begin{propos}
		$(E,σ)$ is ergodic if and only if $(X,£B,μ,τ)$ is ergodic.
	\end{propos}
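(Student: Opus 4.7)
\emph{Plan.} The proof is a dictionary translation via the representation $σ = \widetilde{τ}$. The technical key is that because $τ$ is non-singular, the Radon--Nikodym density $\mathrm{d}τ_*μ/\mathrm{d}μ$ is strictly positive $μ$-almost everywhere, so formula~\eqref{action def} yields $\supp(σv) = τ(\supp v)$ up to negligible sets, and by iteration $\supp(σ^n v) = τ^n(\supp v)$ for every $n∈\zz$ and every positive $v∈E$. Since two positive elements of $L^1(μ)$ are lattice-disjoint if and only if their supports have negligible intersection, the lattice condition ``$u \meet σ^n v = 0$ for all $n$'' translates verbatim into ``$μ(\supp u ∩ τ^n \supp v) = 0$ for all $n$''.

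For the measure-theoretic-to-lattice direction, assume $(X,£B,μ,τ)$ is ergodic and suppose non-zero positive $u,v ∈ E$ satisfy $u \meet σ^n v = 0$ for every $n ∈ \zz$. Writing $A = \supp u$ and $B = \supp v$, the union $B_{\infty} = \bigcup_{n ∈ \zz} τ^n B$ is $τ$-invariant, so ergodicity gives either $μ(B_\infty) = 0$---in which case $v = 0$---or $μ(X \setminus B_\infty) = 0$; in the latter situation $A$ is covered up to null by the countable union $B_\infty$ while meeting each $τ^n B$ in a null set, so $A$ itself is null and $u = 0$. Either way we contradict non-triviality, establishing lattice ergodicity.

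For the converse, assume $(E,σ)$ is ergodic and let $A ∈ £B$ be $τ$-invariant. Set $u = \charfun{A}$ and $v = \charfun{X \setminus A}$; since $X \setminus A$ is also $τ$-invariant, $\supp(σ^n v) = X \setminus A$ up to null, which is disjoint from $A = \supp u$, so $u \meet σ^n v = 0$ for every $n$. Lattice ergodicity then forces $u = 0$ or $v = 0$, i.e.\ $μ(A) = 0$ or $μ(X \setminus A) = 0$.

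The only substantive ingredient is the support identity for $\widetilde{τ}$, which rests on the non-singularity of $τ$ and the fact that the Radon--Nikodym derivative is almost-everywhere positive; once that is in place, both directions are a routine bookkeeping exercise, and I foresee no real obstacle.
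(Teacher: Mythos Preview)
Your proof is correct and follows essentially the same route as the paper's: both directions use the same choice of test elements (characteristic functions of $A$ and $X\setminus A$ for the lattice-to-measure direction, the invariant union $\bigcup_{n}τ^n\supp v$ for the other), and the only real ingredient is the support identity $\supp(σ^n v)=τ^n\supp v$, which the paper uses implicitly and you make explicit. There is nothing to add.
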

	\begin{proof}
		Suppose $(E,σ)$ ergodic and let $A∈£B$ be $τ$-invariant. Then, for all $i∈\zz$, $σ^i(χ\lowersub{A})$ is disjoint from $χ\lowersub{X\setdiff A}$, which implies that either $A$ or $X\setdiff A$ is negligible. Conversely, if $(X,τ)$ is ergodic and $f,g∈ E$ are positive, then the support of $f$ is included modulo $μ$ in $\bigcup_{i∈\zz} τ^i\supp(g)$, as the the latter is $τ$-invariant. This means that there is $i∈\zz$ such that $μ§(\supp(f) ∩ \supp(σ^i g)) > 0$, showing that $(E,σ)$ is ergodic.
	\end{proof}
	
	\begin{propos}
		Let \symx\ be either \symiif, \symiin, or \symiii. Then, $(E,σ)$ is of kind \symx\ if and only if $(X,£B,μ,τ)$ is of kind \symx.
	\end{propos}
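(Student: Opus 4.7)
The plan is to pass through invariant measures: show that the lattice-theoretic kinds \symiif, \symii, and \symiii\ correspond, under the identification $(E,\sigma)=(L^1(\mu),\widetilde{\tau})$, to finite invariant, $\sigma$-finite invariant, and no invariant equivalent measure respectively. Since the three kinds are mutually exclusive both for systems and for lattices, it is enough to establish the equivalences for \symiif\ and for \symii\ separately.

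The starting observation is that, by the change-of-variables formula and the definition of $\widetilde{\tau}$, for any positive $\mathcal B$-measurable function $f$ on $X$ (not necessarily in $L^1$), the measure $\nu = f\cdot\mu$ is $\tau$-invariant if and only if the pointwise identity $\widetilde{\tau}(f)=f$ holds. This makes the \symiif-equivalence immediate: a nonzero positive fixed point $u\in E$ of $\widetilde{\tau}$ gives the finite measure $u\cdot\mu$, which is equivalent to $\mu$ because ergodicity forces $\supp u$ to be $\tau$-invariant and hence of full measure; conversely the density of a finite invariant equivalent measure lies in $E$ and is a fixed point.

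The main work is the \symii-equivalence. The easy direction: given a $\sigma$-finite invariant $\nu\sim\mu$ with density $f=\dd\nu/\dd\mu$, pick $A$ with $0<\nu(A)<\infty$ and set $u=f\cdot \charfun{A}\in E$. Using $\widetilde{\tau}f=f$ and the fact that $\widetilde{\tau}$ sends restriction to $A$ to restriction to $\tau A$, one gets $\widetilde{\tau}^n u = f\cdot\charfun{\tau^n A}$, so $u$ and $\widetilde{\tau}^n u$ both agree with $f$ on $A\cap \tau^n A$, which is exactly the compatibility condition. The hard direction, which I expect to be the real obstacle, is to reconstruct an invariant measure from an arbitrary positive autocompatible $u\in E$. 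Setting $A=\supp u$, I would first note that autocompatibility $u\restr A\cap\tau^n A=\widetilde{\tau}^n u \restr A\cap\tau^n A$, upon applying the lattice automorphism $\widetilde{\tau}^m$, promotes to the consistency relation $\widetilde{\tau}^m u=\widetilde{\tau}^n u$ on $\tau^m A\cap\tau^n A$ for all $m,n\in\mathbb Z$. By ergodicity $X=\bigcup_{n\in\mathbb Z}\tau^n A$ up to null sets, so enumerating $\mathbb Z$ as $(n_i)_i$ and letting $Y_0=A$, $Y_i=\tau^{n_i}A\setdiff\bigcup_{j<i}Y_j$, one can define a measurable $f$ on $X$ by $f\restr Y_i=\widetilde{\tau}^{n_i} u$; the consistency relation guarantees that this is independent of the choice of representative $n$. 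The resulting measure $\nu=f\cdot\mu$ is $\sigma$-finite because each $\widetilde{\tau}^{n_i} u$ has the same $L^1$-norm as $u$, is equivalent to $\mu$ because $\widetilde{\tau}^{n_i}u$ is strictly positive on $\tau^{n_i}A$, and is invariant because for $x\in\tau^n A$ one computes pointwise $(\widetilde{\tau}f)(x)=\tfrac{\dd\tau_*\mu}{\dd\mu}(x)\,\widetilde{\tau}^{n-1}u(\tau^{-1}x)=\widetilde{\tau}^n u(x)=f(x)$.

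Combining the two equivalences, a system of kind \symiin\ (being \symii\ but not \symiif) corresponds exactly to a lattice of kind \symiin, and \symiii\ follows by exclusion. The delicate points to handle carefully in writing this up are the measurability of the gluing in the construction of $f$ (a standard but not entirely trivial exhaustion) and the verification that compatibility, as stated only for $n<\omega$, extends to all $n\in\mathbb Z$, which follows by applying $\widetilde{\tau}^{-n}$ to both sides of the compatibility identity since $\widetilde{\tau}$ preserves the restriction operation.
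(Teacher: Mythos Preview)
Your proposal is correct and follows essentially the same strategy as the paper. Both arguments hinge on the dictionary between a $\tau$-invariant measure $\nu\sim\mu$ and a (possibly non-integrable) positive density $f$ satisfying $\widetilde{\tau}f=f$ pointwise, and both pass between autocompatible elements of $E$ and such densities by restricting to, or gluing along, the orbit of a set of finite mass.

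The only cosmetic difference is in the forward direction of the \symii-equivalence: where you build $f$ by partitioning $X$ into the pieces $Y_i$ and patching the $\widetilde{\tau}^{n_i}u$ together, the paper simply sets $g=\sup_{i\in\mathbb Z}\sigma^i f$ and observes that autocompatibility forces $g\restr\supp(\sigma^i f)=\sigma^i f$. Since your consistency relation says precisely that the $\sigma^i f$ agree on overlaps, the two constructions produce the same function; your version makes the measurability and $\sigma$-finiteness slightly more transparent, while the supremum formulation makes the invariance $\widetilde{\tau}g=g$ immediate by reindexing. Your explicit remarks about extending compatibility from $n<\omega$ to $n\in\mathbb Z$ and about verifying invariance of $\nu$ are points the paper leaves implicit, so your write-up is in fact a bit more careful.
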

	\begin{proof}
		Suppose $(E,σ)$ is of \kindii.
		Then there exists a positive autocompatible element $f∈E$. Define $A_i = \supp(σ^i f)$ and $g = \sup_{i∈\zz} σ^i f$. Then $g \restr A_i = σ^i f$, and by ergodicity, $\bigcup_i A_i = X$ modulo $μ$, so $g$ is positive almost everywhere and thus $ν(A) = \int_A g \dd μ$ is a measure on $X$ equivalent to $μ$. Clearly, $ν$ is \tsigma-finite because $ν(A_i) = \norm{σ^if}* = \norm{f}$, which is finite.
		
		For the converse, suppose $τ$ preserves a \tsigma-finite measure $ν$ equivalent to $μ$. Then the Radon--Nikodym derivative $g= \frac{\dd μ}{\dd ν}$ is a measurable function positive almost everywhere, and there is some $A∈£B$ such that $f = g\restr A$ has finite integral, so that $f∈ E$. 
		
		It is easy to check that if $μ$ and $ν$ are equivalent \tsigma-finite measures and $τ$ is non-singular with respect to $μ$, or equivalently to $ν$, then 
		\begin{equation}\label{identity rn pf}
			\frac{\dd τ_*μ}{\dd τ_*ν} \comp τ = \frac{\dd μ}{\dd ν},
		\end{equation}
		from which it follows
		\begin{equation*}
			\widetilde{τ}^n f = \frac{\dd τ^n_*μ}{\dd μ} ⋅ (f \comp τ^{-n}) = \widetilde{τ^n} f,
		\end{equation*}
		for all integer $n$.
		Let $x ∈ A ∩ τ^{i} A$, then by \eqref{identity rn pf},
		\begin{equation*}
			f(τ^{-i} x) = \frac{\dd ν}{\dd μ} (τ^{-i} x) 
			=  \frac{\dd ν}{\dd τ^i_*μ} ( x)
		\end{equation*}
		so that 
		\begin{math}
			σ^i f(x) =  \frac{\dd τ^i_* μ}{\dd μ}  (x) f(τ^{-i} x) = f(x),
		\end{math}
		showing that $f$ is autocompatible.
		
		The same argument, with $A_0 = A = X$, shows that 
		$(E,σ)$ is of \kindiif\ if and only if $(X,£B,μ,τ)$ is of \kindiif, which concludes the proof.
	\end{proof}

	Let $€p$ be a $1$-type in $S_{x}(T_{σ})$ satisfying $\norm{x}^{€p} = \norm{x^+}*{}^{€p} = 1$, that is, the type of a positive element with norm $1$. We say that $€p$ is a \emph{type of fixed point} if $\norm{σx - x}^{€p} = 0$, a \emph{type of compatibility} if $\norm{\frac{σ^nx}{2} \meet x - \frac x2 \meet σ^n x}^{€p} = 0$ for all $n∈\nn$, a \emph{type of weak wandering} if there is an infinite set $I⊆\nn$ such that $\norm{σ^i x \meet σ^j x}*^{€p} = 0$ for all $i≠j$ in $I$.
	
	As a corollary to \Cref{hajian-kakutani}, we have that $(E,σ)$ is of \kindiif\ if and only if it does not realise a type of weak wandering.
	This lets us distinguish separable lattice systems of different kinds based on the $1$-types they realise.
	\begin{propos}
		A separable lattice system $(E,σ)$ is
		\begin{itemize}[nosep, label={---}]
			\item of \kindiif\ when it realises a type of fixed point, but no type of weak wandering,
			\item of \kindiin\ when it realises a type of compatibility, a type of weak wandering, but no type of fixed point,
			\item of \kindiii\ when it realises a type of weak wandering, but no type of compatibility.
		\end{itemize}
	\end{propos}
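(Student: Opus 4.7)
The strategy is to translate each of the three classes of $1$-types into concrete measure-theoretic properties of $(X,\mathcal{B},\mu,\tau)$ (using the identification $(E,\sigma) = (L^1(\mu),\widetilde{\tau})$ coming from \Cref{representation aut separable case}), and then to invoke the two previous propositions identifying the kind of $(E,\sigma)$ with that of $(X,\mathcal{B},\mu,\tau)$, together with the Hajian--Kakutani characterisation of \kindiif\ (\Cref{hajian-kakutani}).

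The key translations I would establish first are the following. A positive $u \in L^1(\mu)$ of norm $1$ realises a type of fixed point iff $\widetilde{\tau} u = u$; by ergodicity applied to the $\tau$-invariant set $\{u>0\}$ one has $u>0$ $\mu$-a.e., and a change-of-variable computation using the Radon--Nikodym derivative $d\tau_*\mu/d\mu$ from the definition of $\widetilde{\tau}$ shows that $d\nu := u\,d\mu$ is then a finite $\tau$-invariant measure equivalent to $\mu$; conversely, the density of any such $\nu$ is a positive fixed point in $L^1$. An autocompatible element in $E$ realises a type of compatibility, and conversely the construction in the previous proposition, starting from a $\sigma$-finite invariant equivalent measure, produces one. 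Finally, a positive $u$ realises a type of weak wandering iff $\supp(u)$ is a weakly wandering set, from which it follows (using $\chi_W$ for one direction and taking supports for the other) that weak wandering types are realised iff weakly wandering sets exist.

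With these translations in hand I would handle the three cases. For \kindiif, the invariant finite equivalent measure yields a positive $\widetilde{\tau}$-fixed element, hence a type of fixed point, while \Cref{hajian-kakutani} rules out weakly wandering sets and hence types of weak wandering. For \kindiin, the previous proposition provides a positive autocompatible element, giving a type of compatibility; since the system is not of \kindiif, \Cref{hajian-kakutani} yields a weakly wandering set and thus a type of weak wandering; a hypothetical realised type of fixed point would, by the first translation, force the existence of a finite invariant equivalent measure, contradicting not being of \kindiif. For \kindiii, kind~\symii\ fails, so no positive autocompatible element exists and no type of compatibility is realised, while non-\kindiif\ again gives a type of weak wandering via Hajian--Kakutani.

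The main delicate point I anticipate is the upgrade from ``$u$ positive with $\widetilde{\tau}u = u$'' to ``$u>0$ $\mu$-a.e.'' and thence to the existence of an invariant equivalent finite measure, which is what lets the fixed-point type rule out \kindiin\ and \kindiii. Everything else reduces to bookkeeping between the lattice side (supports, autocompatibility, disjointness of iterates) and the measure-theoretic side (invariant sets and measures), using facts already developed earlier in the section.
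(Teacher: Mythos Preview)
Your proposal is correct and matches the paper's approach: the paper does not give a separate proof of this proposition, treating it as an immediate consequence of the two preceding propositions (identifying the kind of $(E,\sigma)$ with that of $(X,\mathcal{B},\mu,\tau)$) together with \Cref{hajian-kakutani}, exactly the ingredients you assemble. Your write-up simply fleshes out the details the paper leaves implicit, in particular the ergodicity argument showing that a positive fixed point must be strictly positive a.e.\ and hence yields a finite invariant equivalent measure.
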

	
	\begin{question}
		Is there a non trivial $1$-type which is realised by two lattice systems of different kinds?
	\end{question}

	\section{Conjugacy classes of $\aut^*(μ)$} \label{section: conjugacy}
	
	Let $(X, £B, μ)$ be a standard atomless probability space and let $\aut^*(μ)$ denote the group of all non-singular transformations of $X$. Then $E = L^1(X,£B, μ)$ is a separable model of \alol, and the group $\aut (E)$ of automorphisms of Banach lattices is isomorphic to $\aut^*(μ)$ via \eqref{action def}. As we have seen in \Cref{rokhlin section}, this isomorphism identifies the subset $S⊆\aut^*(μ)$ of aperiodic transformations with the the set of those automorphisms $σ ∈ \aut( E)$ that make $(E,σ)$ a model of $T_A$. For the sake of simplicity, in the following we will not distinguish between an element of $\aut^*(μ)$ and the induced automorphism of $E$.
	
	In \cite{tulcea}, Ionescu Tulcea introduced the \emph{weak topology} $\wtopol$ of $\aut^*(μ)$, which can be described as the topology of pointwise convergence (or the strong operator topology) in $\aut(E)$, transferred to $\aut^*(μ)$ via the isomorphism above. This means that if $(σ_n)_n$ is a sequence in $\aut^*(μ)$, then it converges to some $σ_{*}$ in $\wtopol$ if and only if $\norm{\widetilde{σ}_nu - \widetilde{σ}_{*}u} \to 0$ for all $u∈E$.
	This topology makes $\aut^*(μ)$ a Polish group and it does not change if we replace $μ$ with another equivalent measure.

	The topological properties of $\aut^*(μ)$ have been extensively studied. For a thorough treatment, we refer the reader to \cite{danilenko, halmos, friedman}. The following fact is a fundamental tool in this context.
	
	\begin{fact}[{\cite[p.\ 77]{halmos}}]\label{conjugacy lemma}
		The conjugacy class of each aperiodic transformation is dense in $\aut^*(μ)$.
	\end{fact}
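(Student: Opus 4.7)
The plan is as follows. Fix an aperiodic $\sigma \in \aut^*(\mu)$, a target $\tau \in \aut^*(\mu)$, and a basic weak neighborhood of $\tau$, which by definition is specified by finitely many $u_1,\ldots,u_k \in L^1(\mu)$ and some $\varepsilon>0$. Approximating each $u_j$ in $L^1$-norm by a simple function and enlarging the list, I may reduce to the case that each $u_j$ is the indicator $\chi_{A_j}$ of a measurable set. The task thus becomes: produce $\rho \in \aut^*(\mu)$ with $\| \widetilde{\rho \sigma \rho^{-1}}\, \chi_{A_j} - \widetilde{\tau}\, \chi_{A_j} \|_1 < \varepsilon$ for $j=1,\ldots,k$.

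Next I would discretize $\tau$ itself. Choose a finite measurable partition $\mathcal{Q}$ refining the $A_j$'s and their $\tau^{-1}$-preimages, fine enough that the Radon--Nikodym derivative $\dd\tau_*\mu/\dd\mu$ is almost constant on each cell of $\tau^{-1}\mathcal{Q}$. Then $\tau$ can be approximated in the weak topology by a non-singular map $\tau'$ which, on each cell of $\mathcal{Q}$, is a measurable bijection onto another cell with locally constant density; such piecewise-affine non-singular permutations of a finite partition form a weakly dense class in $\aut^*(\mu)$. It therefore suffices to realize an arbitrary such $\tau'$ as $\rho \sigma \rho^{-1}$ up to an arbitrarily small error.

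Now I would apply the Rokhlin lemma to the aperiodic $\sigma$: for some sufficiently large $n$, find a measurable $B$ such that $B, \sigma B,\ldots,\sigma^{n-1}B$ are pairwise disjoint with union of measure at least $1-\varepsilon$. I would then use the levels of this tower as a fine coordinate system. Concretely, I subdivide each level $\sigma^i B$ into sub-pieces whose $\mu$-measures match those of sub-pieces of cells of $\mathcal{Q}$ arranged along the $\tau'$-orbits prescribed by the combinatorics of how $\tau'$ permutes cells; then I define $\rho \in \aut^*(\mu)$ by any non-singular bijection carrying each tower sub-piece to the corresponding cell sub-piece. By construction, $\rho \sigma \rho^{-1}$ coincides with $\tau'$ outside the small leftover set, which controls the $L^1$-discrepancy on each $\chi_{A_j}$.

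The main obstacle is the non-singular bookkeeping: the conjugate $\rho \sigma \rho^{-1}$ comes equipped with a Radon--Nikodym derivative obtained from $\dd\sigma_*\mu/\dd\mu$ by twisting with the densities of $\rho$, so the identification of tower sub-pieces with cells of $\mathcal{Q}$ must match not only measures but also the prescribed density $\dd\tau'_*\mu/\dd\mu$. Arranging this to sufficient precision amounts to a finite combinatorial measure-matching problem, which can be solved by refining $\mathcal{Q}$ and choosing $n$ large before constructing $\rho$. The delicate point is thus the order in which the approximation parameters are chosen, rather than any intrinsic difficulty in the argument.
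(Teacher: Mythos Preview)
The paper does not prove this statement; it is quoted as a fact with a reference to Halmos. (Halmos in fact treats the measure-preserving case; the non-singular extension is standard and appears, for instance, in the works of Ionescu Tulcea and Friedman cited elsewhere in the paper.) There is thus no proof in the paper against which to compare your proposal.

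Your outline is the standard tower-matching argument and is correct in its broad strokes: approximate $\tau$ weakly by a non-singular permutation $\tau'$ of a finite partition, build a high Rokhlin tower for $\sigma$, and design $\rho$ to carry the tower combinatorics onto those of $\tau'$. One point deserves more care than you give it. In the non-singular setting the tower levels $\sigma^i B$ need not have equal $\mu$-measure, so the phrase ``subdivide each level into sub-pieces whose $\mu$-measures match those of sub-pieces of cells of $\mathcal{Q}$'' cannot be read literally as in the measure-preserving case. The fix is precisely the freedom you allude to at the end: since $\rho$ may be taken to be any non-singular bijection, one chooses it level by level with a prescribed Radon--Nikodym density, simultaneously rescaling each $\sigma^i B$ to the required target mass and arranging the density of $\rho\sigma\rho^{-1}$ on each cell to match that of $\tau'$. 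This is not a genuine gap so much as a place where the non-singular bookkeeping is doing more work than your sketch acknowledges; once this rescaling step is made explicit (and the approximation parameters are chosen in the right order, as you note), the argument goes through.
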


	For any formula $φ$ we denote its interpretation in the structure $(E,σ)$ by $φ^{σ}$.
	We denote the type of $\seq{u}$ in $(E,σ)$ by $\type^{σ}(\seq u)$
	
	\begin{lemma}\label{char topology in aut}
		The weak topology on $\aut^{*}(μ)$ is precisely the initial topology with respect to the family of functions $σ \mapsto φ^{σ}(u)$, where $φ(x)$ is a quantifier-free formula in $£L_{σ}$ and $u ∈ E^{\abs{x}}$.
	\end{lemma}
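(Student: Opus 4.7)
The plan has two inclusions. The easy direction is to note that the weak topology is already generated by a very natural family of quantifier-free formulas. Indeed, by its definition as the topology of pointwise convergence, $\wtopol$ has as a subbasis the sets
\begin{equation*}
\{σ \in \aut^*(μ) : \norm{\widetilde{σ}u - v} < ε\}, \qquad u,v \in E,\ ε>0,
\end{equation*}
or, equivalently, $\wtopol$ is the initial topology with respect to the functions $σ \mapsto \norm{\widetilde{σ}u - v}$. Each such function is of the form $σ \mapsto φ^{σ}(u,v)$ with $φ(x,y) = \norm{σ(x) - y}$, a quantifier-free atomic $£L_σ$-formula. Hence $\wtopol$ is coarser than the initial topology in the statement.

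For the other direction, I will show by induction on the complexity of an $£L_σ$-term $t(\bar x)$ that, for every tuple $\bar u \in E^{\abs{\bar x}}$, the map
\begin{equation*}
Φ_{t,\bar u} \colon (\aut^*(μ),\wtopol) \longrightarrow (E,\norm{\cdot}),\qquad σ \longmapsto t^{σ}(\bar u)
\end{equation*}
is continuous. The base case (variables and $0$) is trivial, and the steps for $-$, $\tfrac{\cdot+\cdot}{2}$ and $\abs{\cdot}$ are immediate from the Lipschitz continuity of these lattice operations on $E$ together with the inductive hypothesis. The crucial case is $t = σ(t_1)$: if $σ_α \to σ_{*}$ in $\wtopol$ and $v_α = t_1^{σ_α}(\bar u) \to v_{*} = t_1^{σ_{*}}(\bar u)$ in norm (by the inductive hypothesis), then
\begin{equation*}
\widetilde{σ}_α v_α - \widetilde{σ}_{*} v_{*} = \widetilde{σ}_α (v_α - v_{*}) + (\widetilde{σ}_α - \widetilde{σ}_{*}) v_{*}.
\end{equation*}
The first summand has norm $\norm{v_α - v_{*}} \to 0$ since $\widetilde{σ}_α$ is an isometry, while the second tends to $0$ by the very definition of $\wtopol$ applied to the fixed vector $v_{*}$. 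This is the only genuine obstacle, amounting to joint continuity of the evaluation map $\aut(E) \times E \to E$ when $\aut(E)$ carries the strong operator topology, and it is resolved by this standard triangle-inequality decomposition.

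Once continuity of $Φ_{t,\bar u}$ is established for every term $t$, continuity of $σ \mapsto \norm{t^{σ}(\bar u)}$ follows by composition with the norm, handling the atomic formulas. Basic quantifier-free formulas are obtained by applying continuous real connectives to atomic ones, so they give continuous functions of $σ$ as well. Finally, general quantifier-free formulas are forced (that is, uniform) limits of basic ones, and a uniform limit of continuous functions is continuous, so every $σ \mapsto φ^{σ}(\bar u)$ with $φ$ quantifier-free is continuous in $\wtopol$. This proves that the initial topology generated by the family in the statement is coarser than $\wtopol$, completing the equality.
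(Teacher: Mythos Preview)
Your proof is correct and follows essentially the same strategy as the paper's. Both directions match: the easy inclusion uses the formula $\varphi(x,y) = \norm{\sigma x - y}$, and the harder inclusion reduces to the same triangle-inequality decomposition $\widetilde{\sigma}_\alpha v_\alpha - \widetilde{\sigma}_* v_* = \widetilde{\sigma}_\alpha(v_\alpha - v_*) + (\widetilde{\sigma}_\alpha - \widetilde{\sigma}_*) v_*$. The only cosmetic difference is that the paper first rewrites $\varphi(x)$ as $\psi(x,\sigma x,\ldots,\sigma^k x)$ with $\psi$ in $\mathcal{L}_{\mathrm{BL}}$ (using that $\sigma$ commutes with the lattice operations) and then inducts on the power of $\sigma$, whereas you induct directly on term complexity; your version is marginally more robust and also explicitly handles forced limits, but the underlying argument is the same.
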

	\begin{proof}
		Let $(σ_{α})_{α}$ a net in $\aut^{*}(μ)$ converging to some $σ_{*}$ in the initial topology described above, and consider $φ(x,y) = \norm{σx-y}$. Then, for any $u ∈ E$,  $φ^{σ_{α}}(u,σ_{*}u) = \norm{σ_{α} u - σ_* u}$ converges to $φ^{σ_*}(u,σ_{*}u) = 0$, showing that $σ_{α} \smash{\xrightarrow{\wtopol}} σ_*$.
		
		Conversely, suppose $(σ_{n})_{n}$ is a sequence converging to $σ_*$ in the weak topology, and let $φ(x)$ be a quantifier-free formula in $£L_{σ}$ and $\seq u ∈ E^{\card{x}}$. 
		
		We can rewrite $φ(x)$ as $ψ(x,σx,…,σ^k)$, with no instance of $σ$ occurring in $ψ(x,y_1,…,y_k)$. For each $i<k$,
		\begin{equation*}
			\norm{σ_n^{i+1} u - σ_*^{i+1} u} ⩽ \norm{σ_n^i u - σ_*^i u} + \norm{σ_n (σ_*^i u) - σ_* (σ_*^i u)},
		\end{equation*}
		which goes to zero by induction and $\wtopol$-convergence. 
		As $ψ^E$ is uniformly continuous, we conclude that $φ^{σ_n}(\seq{u})$ converges to $φ^{σ_{*}}(\seq{u})$.
	\end{proof}
	
	\begin{obs}\label{topology of S}
		As $T_A$ eliminates quantifiers, the restriction of $\wtopol$ to $S$ can be described as in the previous lemma, but with $φ$ varying among all formulas.
	\end{obs}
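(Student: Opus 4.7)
The plan is to combine \Cref{char topology in aut} with the quantifier elimination of $T_A$ (\Cref{qe of TA}). Since quantifier-free formulas form a subfamily of all formulas, the initial topology on $S$ induced by the maps $σ \mapsto φ^{σ}(\seq u)$ with $φ$ arbitrary refines the one induced by the quantifier-free $φ$'s; by \Cref{char topology in aut} the latter is precisely $\wtopol$ restricted to $S$. What remains is the reverse inclusion, namely that for every formula $φ(x)$ in $£L_{σ}$ and every tuple $\seq u ∈ E^{\card x}$, the evaluation function $σ \mapsto φ^{σ}(\seq u)$ is $\wtopol$-continuous on $S$.

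For this, I would exploit the fact that every $σ ∈ S$ yields a model $(E,σ) \models T_A$. By \Cref{qe of TA} together with the forced-limit convention for formulas used in this paper, for each $n$ there exists a quantifier-free formula $ψ_{n}(x)$ in $£L_{σ}$ such that $\abs{φ^{(N,τ)}(\seq a) - ψ_{n}^{(N,τ)}(\seq a)} ⩽ 1/n$ holds uniformly over all models $(N,τ) \models T_A$ and all tuples $\seq a$ in $N^{\card x}$. Specialising to $(E,σ)$ for $σ ∈ S$ and to the fixed tuple $\seq u$, the function $σ \mapsto φ^{σ}(\seq u)$ appears on $S$ as a uniform limit of the sequence of functions $σ \mapsto ψ_{n}^{σ}(\seq u)$.

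Each of these approximating functions is $\wtopol$-continuous on $\aut^{*}(μ)$ by \Cref{char topology in aut}, hence in particular $\wtopol$-continuous on the subset $S$. Since a uniform limit of continuous functions is continuous, $σ \mapsto φ^{σ}(\seq u)$ is $\wtopol$-continuous on $S$, which gives the reverse inclusion and concludes the argument. I do not foresee any real obstacle here: the only point requiring attention is that the quantifier-free approximation provided by quantifier elimination be uniform across all models of $T_A$, which is exactly what \Cref{qe of TA} guarantees in the continuous-logic, forced-limit framework adopted throughout the paper.
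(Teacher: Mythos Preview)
Your proposal is correct. In the paper this statement is a bare remark with no proof attached: it simply notes that, since $T_A$ has quantifier elimination, one may replace the quantifier-free formulas in \Cref{char topology in aut} by arbitrary formulas when restricting to $S$. Your argument supplies exactly the details behind this remark, namely that every $σ \mapsto φ^{σ}(\seq u)$ is a uniform limit on $S$ of the $\wtopol$-continuous maps $σ \mapsto ψ_n^{σ}(\seq u)$ coming from quantifier elimination, hence itself $\wtopol$-continuous. (In fact, in the forced-limit framework adopted here, quantifier elimination gives for each $φ$ a single quantifier-free $ψ$ with $T_A \models φ = ψ$, so one can shortcut the approximation step; your slightly more cautious version via uniform limits is of course equally valid.)
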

	
	It is easy to see that the sets $\doublesqbr{φ^{\cdot}u >0} = §{σ∈\aut^*(μ) : φ^{σ}u >0}$, where $φ(x)$ is a quantifier-free formula in $£L_{σ}$ and $u ∈ E^{\abs{x}}$, form a basis for the weak topology.

	\begin{lemma}\label{s is polish}
		$S$ is a dense \gdelta\ subset of $\aut^*(μ)$. In particular, $S$ is a Polish space.
	\end{lemma}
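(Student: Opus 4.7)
My plan is to establish the two properties separately and then apply Alexandrov's theorem: a subspace of a Polish space is itself Polish if and only if it is \gdelta, and $\aut^*(μ)$ is already Polish by Ionescu Tulcea.

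For density, the conjugacy lemma (\Cref{conjugacy lemma}) asserts that the conjugacy class of any aperiodic transformation is dense in $\aut^*(μ)$. Since aperiodicity is invariant under conjugation and explicit aperiodic transformations exist (for instance irrational rotations, see \eqref{def rotation}), $S$ contains such a dense conjugacy class and is therefore dense.

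For the \gdelta\ property, I would use the characterisation recalled just before the lemma: $σ ∈ S$ exactly when $(E, σ) \models T_A$, which, since the $T_{σ}$ axioms are automatic, amounts to $\rokh{n}^{σ} = 0$ for every $n > 0$. Unfolding each Rokhlin axiom, this says
\begin{equation*}
  ∀ k ⩾ 1 \; ∀ x ∈ E \; ∃ y ∈ E : \rokh{n}(x, y)^{σ} < 1/k ,
\end{equation*}
where $\rokh{n}(x,y)$ denotes the quantifier-free body of the axiom. Fixing a countable dense subset $D ⊆ E$ and using uniform continuity of $\rokh{n}(x,y)$ in $(x,y)$, both quantifiers can be restricted to $D$, giving
\begin{equation*}
  S = \bigcap_{n, k ⩾ 1} \; \bigcap_{x ∈ D} \; \bigcup_{y ∈ D} \bigl\{ σ ∈ \aut^*(μ) : \rokh{n}(x, y)^{σ} < 1/k \bigr\} .
\end{equation*}
Each innermost set is open because, by \Cref{char topology in aut}, the evaluation $σ \mapsto \rokh{n}(x,y)^{σ}$ is continuous in the weak topology for any fixed $x, y ∈ E$. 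Hence $S$ is the intersection of countably many open sets.

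The main obstacle I anticipate is the reduction to countable quantifiers: one has to track the moduli of uniform continuity of $\rokh{n}(x,y)$ in $(x,y)$ and choose the approximations in $D$ carefully enough that the strict inequality $< 1/k$ is preserved after replacing elements of $E$ by nearby elements of $D$. Once this bookkeeping is settled, the \gdelta\ claim is in hand and $S$ inherits a Polish structure from $\aut^*(μ)$ via Alexandrov's theorem.
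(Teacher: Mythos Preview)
Your proposal is correct and follows essentially the same route as the paper: density via \Cref{conjugacy lemma} and conjugation-invariance of aperiodicity, and the \gdelta\ property by unfolding the Rokhlin axioms and invoking \Cref{char topology in aut} for continuity of $σ \mapsto \rokh{n}(x,y)^{σ}$.

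One simplification worth noting: the paper does \emph{not} restrict the existential quantifier over $y$ to a countable dense set. It writes
\[
  S = \bigcap_{n<ω} \bigcap_{u ∈ E_0} \bigcap_{m>0} \bigcup_{v∈E} §{σ : \rokh{n}^{σ}(u,v) < 1/m},
\]
with the inner union taken over all of $E$. Since an arbitrary union of open sets is open, this inner union is open regardless of cardinality, and only the three outer intersections need to be countable. This removes exactly half of the uniform-continuity bookkeeping you flagged as the main obstacle: you only need to justify restricting the \emph{universal} quantifier over $x$ to a countable dense $E_0$, which follows immediately from the uniform continuity of $x \mapsto \inf_y \rokh{n}(x,y)^{σ}$.
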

	\begin{proof}
		As the conjugate of an aperiodic transformation is still aperiodic, $S$ is invariant under conjugation. This means that if $σ∈S$ and we denote its conjugacy class by $\class{σ}$, then $\class{σ}⊆S$, but by \Cref{conjugacy lemma} $\class{σ}$ is dense in $\aut^*(μ)$, so $S$ is too.
		
		We will now show that $S$ is \gdelta. Since the elements of $S$ are precisely those that satisfy Rokhlin axioms $\sup_x \inf_y \rokh{n}(x,y)$ (as defined in \eqref{rokhlin axiom}), we can rewrite $S$ as 
		\begin{equation*}
			S = \bigcap_{n<ω} \bigcap_{u ∈ E_0} \bigcap_{m>0} \bigcup_{v∈E} §{σ ∈ \aut^*(μ) : \rokh{n}^{σ}(u,v) < 1/m},
		\end{equation*}
		where $E_0$ is a countable dense subset of $E$. By the previous lemma, $§{σ : \rokh{n}^{σ}(u,v) < 1/m}$ is open, and thus $S$ is countable intersection of open sets.$  $
	\end{proof}

	Let $φ(x)$ be a formula in $£L_{σ}$. The family of its interpretations $φ^{σ}\colon E^{\card{x}} \to \rr$ with $σ$ varying in $S$ is equicontinuous, so the map
	$π_{φ} \colon S × E^{\card{x}} \to \rr$ defined by $π_{φ}(σ,u) = φ^{σ}(u)$,
	is continuous, using the characterisation in \Cref{topology of S}. This means that the maps
	\begin{align*}
		θ_{n} \colon S × E^n &\to S_n(T_A)\\
		(σ,u) &\mapsto \type^{σ}(u)
	\end{align*}
	are also continuous.
	We will also make use of the following fact, which is an easy consequence of quantifier elimination for $T_A$.
	\begin{fact}\label{type of conj}
		$φ^{fσf^{-1}}(u) = φ^{σ}(f^{-1}u)$ and $\type^{fσf^{-1}}(u) = \type^{σ}(f<^{-1}u)$ for any $f∈\aut(E)$ and $σ∈S$, by quantifier elimination.
	\end{fact}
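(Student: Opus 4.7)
The key observation is that the Banach lattice automorphism $f\colon E\to E$ gives an $\mathcal{L}_\sigma$-isomorphism between $(E,\sigma)$ and $(E,f\sigma f^{-1})$. Indeed, $f$ preserves all the symbols of $\mathcal{L}_{\mathrm{BL}}$ by definition, and the identity $f\circ \sigma = (f\sigma f^{-1})\circ f$ says precisely that $f$ intertwines the distinguished automorphisms in the two structures. Since $\mathcal{L}_\sigma$-isomorphisms preserve the value of every formula, evaluating $\phi$ at $f^{-1}u$ in $(E,\sigma)$ agrees with evaluating $\phi$ at $f(f^{-1}u)=u$ in $(E,f\sigma f^{-1})$.

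To make this explicit, I would first check the claim for quantifier-free $\mathcal{L}_\sigma$-formulas $\phi(x)$. Any such $\phi$ can be written as $\psi(x,\sigma x,\ldots,\sigma^k x)$ for some quantifier-free $\mathcal{L}_{\mathrm{BL}}$-formula $\psi$, so
\begin{equation*}
\phi^{\sigma}(f^{-1}u)=\psi^E\bigl(f^{-1}u,\sigma f^{-1}u,\ldots,\sigma^k f^{-1}u\bigr)
=\psi^E\bigl(f^{-1}u,f^{-1}(f\sigma f^{-1})u,\ldots,f^{-1}(f\sigma f^{-1})^k u\bigr).
\end{equation*}
Because $f$ (hence $f^{-1}$) is a Banach lattice automorphism, it preserves the value of $\psi^E$, so the right-hand side equals $\psi^E\bigl(u,(f\sigma f^{-1})u,\ldots,(f\sigma f^{-1})^k u\bigr)=\phi^{f\sigma f^{-1}}(u)$, as required.

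To extend the identity from quantifier-free formulas to arbitrary formulas, I would invoke \Cref{qe of TA}: since $(E,\sigma)$ and $(E,f\sigma f^{-1})$ are both models of $T_A$ (the set $S$ consisting precisely of the automorphisms making $E$ into such a model, and conjugation preserves aperiodicity), every formula is equivalent modulo $T_A$ to a quantifier-free one, and the equality passes to this common value. The forced-limit construction then extends the equality to all formulas in the sense of \cite{claoc} by uniform continuity. The statement about types is immediate: the type $\type^\sigma(f^{-1}u)$ is determined by the values $\phi^\sigma(f^{-1}u)$ for all formulas $\phi$, and these coincide with the values $\phi^{f\sigma f^{-1}}(u)$ defining $\type^{f\sigma f^{-1}}(u)$. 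No serious obstacle is expected; the content of the fact is essentially the tautology that isomorphisms preserve formulas, with quantifier elimination invoked only to reduce the verification to the quantifier-free case.
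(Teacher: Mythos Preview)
Your proposal is correct and matches the paper's approach: the paper does not give a detailed proof, merely labelling the fact ``an easy consequence of quantifier elimination,'' and your argument fleshes out exactly that. One minor remark: your first paragraph already settles the matter in full, since an $\mathcal{L}_\sigma$-isomorphism preserves \emph{all} formulas, quantified or not, so the appeal to quantifier elimination in your third paragraph is not actually needed (and you seem aware of this in your closing sentence).
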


	Recall that the \emph{thickening} of a set $A$ by $r$ in $S_{n}(T_A)$ is the set $B(A,r) = \bigcup_{€p ∈ A} B(€p, r)$. In $S_{n}(T_A)$, topological openness is preserved by thickening \cite[Lemma~10.2]{claoc}. 
	The following lemma, whose proof is largely due to Todor Tsankov, guarantees that for any $v∈E^n$, the image under $θ_n(\,⋅\, , u)$ of an open set in $S$ has open thickenings in $S_n(T_A)$.

	\begin{lemma}\label{open thickening}
		If $U⊆S × E^n$ is open, then for all $r>0$, there thickening of $θ_{n}(U)$ by $r$ is open in the logic topology.
	\end{lemma}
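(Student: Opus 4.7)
The plan is to prove the stronger statement that $θ_n$ is a topologically open map; the lemma then follows from \cite[Lemma~10.2]{claoc}, which guarantees that thickenings of topologically open sets remain open.

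To show that $θ_n(U)$ is open, I would work pointwise. Given $(σ_0, u_0) \in U$, there is a basic product neighborhood $W \times B_δ(u_0) \subseteq U$, where by \Cref{char topology in aut} combined with \Cref{topology of S}, $W$ is cut out by finitely many inequalities $φ_j^σ(\vec v_j) > c_j$ for $j = 1,\ldots,k$, with $φ_j$ quantifier-free in $£L_σ$ and $\vec v_j$ a tuple in $E$ of appropriate arity. My approach is to absorb the parameters into the tuple: set $\tilde u_0 = (u_0, \vec v_1, \ldots, \vec v_k)$, a tuple of some arity $N$, and $\tilde p_0 = \type^{σ_0}(\tilde u_0) \in S_N(T_A)$. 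Define a basic topologically open neighborhood $\tilde O$ of $\tilde p_0$ by the inequalities $\bar φ_j > c_j$ together with finitely many further conditions keeping the $£L_{BL}$-reduct of $\tilde p \in \tilde O$ within small type-metric distance of the corresponding reduct of $\tilde p_0$; this is possible because \alol\ is separably categorical, so its logic topology agrees with the type metric on $S_N(\alol)$ and metric balls are logic-open there. Finally, let $O = π(\tilde O)$, where $π \colon S_N(T_A) \to S_n(T_A)$ is restriction to the first $n$ coordinates; $π$ is an open map because in continuous logic $\sup_{\vec y} ψ$ is itself a formula, so basic opens map to basic opens.

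The key step is the verification that $O \subseteq θ_n(U)$. Take $q \in O$ and lift it to some $\tilde q \in \tilde O$. Realize $\tilde q$ in a separable model of $T_A$ by a tuple $(\vec a, \vec b_1, \ldots, \vec b_k)$; by Kakutani's theorem and the correspondence of \Cref{representation aut separable case}, this model has the form $(E, σ)$ for some $σ \in S$. By construction $φ_j^σ(\vec b_j) > c_j$, and the joint $£L_{BL}$-type of the realization in $E$ is metric-close to that of $(u_0, \vec v_1, \ldots, \vec v_k)$, so approximate homogeneity of the separable \alol-model yields $f \in \aut(E)$ sending $(\vec a, \vec b_1, \ldots, \vec b_k)$ arbitrarily close to $(u_0, \vec v_1, \ldots, \vec v_k)$ in norm. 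Applying \Cref{type of conj}, one gets $\type^{fσf^{-1}}(f\vec a) = \type^σ(\vec a) = q$, while uniform continuity of the $φ_j$, together with the strict inequalities $φ_j^σ(\vec b_j) > c_j$ and the fact that $f^{-1}\vec v_j$ is close to $\vec b_j$, gives $φ_j^{fσf^{-1}}(\vec v_j) = φ_j^σ(f^{-1}\vec v_j) > c_j$. Hence $(fσf^{-1}, f\vec a) \in W \times B_δ(u_0) \subseteq U$ witnesses $q \in θ_n(U)$. The main obstacle, and the reason the enlarged tuple is necessary, is the simultaneous control required on $f$: one and the same automorphism must move $\vec b_j$ close to $\vec v_j$ (to preserve the defining inequalities of $W$) and $\vec a$ close to $u_0$ (to land in the norm ball); this joint control is exactly what the tuple enlargement, combined with the separable categoricity of \alol, provides.
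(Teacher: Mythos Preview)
Your argument is essentially the paper's, built on the same three ingredients: separable categoricity of $\alpl$ to control the $\lBL$-reduct of the enlarged tuple, approximate homogeneity of $E$ to produce the conjugating automorphism $f$, and \Cref{type of conj} to transport the defining inequalities of $W$. The one substantive difference is that you aim for the stronger conclusion that $θ_n$ itself is open, and then invoke \cite[Lemma~10.2]{claoc}; the paper is content to land within distance $r$ of $θ_n(U)$ (it takes $v=u$ fixed and shows $d(€q,\type^{τ}u)<r$), whereas you let $v=f\vec a$ vary and hit $€q$ exactly. Your tuple enlargement followed by the open projection $π\colon S_N(T_A)\to S_n(T_A)$ plays the same role as the paper's use of $\sup_y(φ(y)\meet ψ(x,y))$.

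One small repair is needed. In the final step you assert that the strict inequalities $φ_j^{σ}(\vec b_j)>c_j$, together with $f^{-1}\vec v_j$ being close to $\vec b_j$, yield $φ_j^{σ}(f^{-1}\vec v_j)>c_j$. But the closeness of $f^{-1}\vec v_j$ to $\vec b_j$ is governed by the tolerance you built into $\tilde O$ \emph{before} you see the realisation $(\vec a,\vec b_j)$, while the gap $φ_j^{σ}(\vec b_j)-c_j$ may be arbitrarily small for some $\tilde q\in\tilde O$. The fix is routine (and is exactly what the paper does when it normalises $φ^{σ}(w)=2$ and uses threshold $1$): choose $c_j'$ with $c_j<c_j'<φ_j^{σ_0}(\vec v_j)$, define $\tilde O$ using $\bar φ_j>c_j'$ instead, and then pick the $\lBL$-type tolerance via equicontinuity so that the resulting error is below $c_j'-c_j$.
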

	\begin{proof}
		Let $(σ,u) ∈ U$. As $U$ is open, we can find $φ(y) ∈ £L_{σ}$, $w∈E^{\card{y}}$ and $ρ>0$ such that
		\begin{equation*}
			(σ,u) ∈ \doublesqbr{φ^{\cdot}w >0} × B(u,ρ) ⊆ U.
		\end{equation*}
		Without loss of generality, we may assume that $φ^{σ}(w)=2$. To show that $B(θ_n(U),r)$ is open, it suffices to find some open neighbourhood $V$ of $\type^{σ}(u)$ such that for all $€q∈V$ there are $τ ∈ \doublesqbr{φ^{\cdot}w >0}$ and $v ∈ B(u,ρ)$ such that $d(€q,\type^{τ}v) < r$.
		Since the family $§{φ^{σ} : σ ∈ S}$ is equicontinuous, there is a positive $δ$ such that
		\begin{equation*}
			\abs{φ^{τ}(w') - φ^{τ}(w)} < 1
		\end{equation*}
		for all $τ∈S$ and all $w' ∈ B(w,2δ)$. We may also require $2δ ⩽ r$. 
		
		As \alpl\ is separably categorical, the logic and metric topologies coincide \cite[\S~4.3]{cont-logic}. This means that the ball $B(\type^E(uw),δ)$ contains a topological neighbourhood of $\type^E(uw)$, so there is a formula $ψ(x,y)$ in $£L_{\mathrm{BL}}$ such that $ψ^E(u,w) = 2$, and for all $u'w' ∈ E^{\card{xy}}$ satisfying $ψ^E(u',w') >0$, we have $d(\type^E(u'w'), \type^E(uw)) < δ$. This in turn implies that there is some $u''w'' \sametype^E u'w'$ such that $d(u''w'', u w) < δ$, by \cite[Proposition~10.4]{claoc}. As $E$ is a separable model of \alpl, by \cite[Prop.~9.8 and Corollary 10.12]{claoc} it is approximately homogeneous, so there exists an automorphism $f$ of $E$ such that $d(u''w'', f(u'w')) < δ$, and thus $d(f(u'w'),uw) < 2δ$ by the triangle inequality.
		
		Define $V = \doublesqbr{ \sup_{y}(φ(y) \meet ψ(x,y)) > 1 }$. This is an open neighbourhood of $\type^{σ}(u)$, because $φ^{σ}(w) = ψ^E(u,w) = 2$. Now let $€q ∈ V$. There are $σ_0∈ S$ and $u' ∈ E^n$ such that $€q = \type^{σ_0}(u')$, and $w' ∈ E^{\card{y}}$ such that $φ^{σ_0} (w') > 1$ and $ψ^E(u',w') >1$. By the previous paragraph, we can find $f∈ \aut(E)$ such that
		\begin{equation}\label{proof: distance}
			d(f(u'w'),uw) < 2δ. 
		\end{equation}
		In particular, $d(fw',w) < 2δ$, so, by the choice of $δ$, we have $\abs{φ^{τ}(fw') - φ^{τ}(w)} < 1$ for all $τ∈S$. 
		
		Now let $τ = fσ_0f^{-1}$. By \Cref{type of conj}, $φ^{τ}(fw') = φ^{σ_0} (w') >1$, so the last inequality in the previous paragraph yields $φ^{τ}(w) > 0$, implying that $τ ∈  \doublesqbr{φ^{\cdot}w >0}$. If we choose $v= u$, then the condition $v∈ B(u,ρ)$ is trivially satisfied, and we just need to check that $d(€q, \type^{τ}u) < r$. Again by \Cref{type of conj}, we have $€q= \type^{τ}(fu')$. Therefore,
		\begin{equation*}
			d(€q, \type^{τ}u) = d(\type^{τ}(fu'), \type^{τ}u) ⩽ d(fu',u) < 2δ ⩽ r,
		\end{equation*}
		by \eqref{proof: distance}, which concludes the proof.
	\end{proof}

	As the distance in the topometric type space $(S_n(T),\typetop,\partial)$ is lower semicontinuous, the set $§{(€p,€q) ∈ S_n(T)^2 : d(€p,€q) ⩽ r}$ is closed, so its sections are too, but these are precisely the closed balls of radius $r$ in $S_n(T)$. We have thus the following fact.
	\begin{fact}\label{closed balls}
		Metrically closed balls in a type space are also topologically closed.
	\end{fact}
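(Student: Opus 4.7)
The plan is to read the statement directly off the definition of a topometric type space, as already hinted in the parenthetical remark preceding the fact. By construction the type metric $d$ on $S_n(T)$ is lower semicontinuous with respect to the logic topology; equivalently, viewing $d$ as a function on the product $S_n(T)^2$ (equipped with the product of the logic topology with itself), for every $r \geq 0$ the sublevel set
\[
\Delta_r = \{(p,q) \in S_n(T)^2 : d(p,q) \leq r\}
\]
is closed in $S_n(T)^2$.

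Next, I would fix any $p_0 \in S_n(T)$ and observe that the metrically closed ball of radius $r$ around $p_0$ is exactly the fibre of $\Delta_r$ above $p_0$, that is,
\[
\overline{B}(p_0,r) = \{q : d(p_0,q) \leq r\} = \iota_{p_0}^{-1}(\Delta_r),
\]
where $\iota_{p_0} \colon q \mapsto (p_0,q)$ is a continuous map from $S_n(T)$ to $S_n(T)^2$. Since the preimage of a closed set under a continuous map is closed, $\overline{B}(p_0,r)$ is closed in the logic topology.

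I do not anticipate any real obstacle: the whole argument is a formal consequence of the definition of topometric space together with the elementary observation that sections of closed subsets of a product are closed. An entirely equivalent one-line phrasing in terms of nets is that if $(q_\alpha)_\alpha \subseteq \overline{B}(p_0,r)$ converges topologically to $q$, then lower semicontinuity of $d$ yields
\[
d(p_0,q) \leq \liminf_\alpha d(p_0,q_\alpha) \leq r,
\]
so $q$ lies in $\overline{B}(p_0,r)$.
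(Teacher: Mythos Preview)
Your proof is correct and is essentially identical to the paper's own argument: the paper also invokes lower semicontinuity of the type metric to conclude that $\{(p,q):d(p,q)\le r\}$ is closed in $S_n(T)^2$ and then observes that its sections are the closed balls. Your phrasing via the continuous map $\iota_{p_0}$ (and the equivalent net argument) is just a slightly more explicit version of the same reasoning.
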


	\begin{lemma}\label{comeagre omission}
		Let $€p$ be a non-isolated $n$-type of $T_A$. Then there is a comeagre subset $S_0$ of $S$ such that $(E,σ)$ omits $€p$ for all $σ ∈ S_0$. 
	\end{lemma}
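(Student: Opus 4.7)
The plan. I would show that $R = \{\sigma \in S : (E,\sigma)\text{ realises }€p\}$ is meagre in the Polish space $S$ by writing it as a countable union of nowhere dense sets; then $S_0 = S \setminus R$ is comeagre. Fix $r_0 > 0$ witnessing non-isolation of $€p$, so that $€p \notin B(€p, r_0)^\circ$ in the logic topology, i.e., every logic-open neighbourhood of $€p$ contains a type at metric distance at least $r_0$ from $€p$. Pick a countable basis $\{V_k\}_k$ of $E^n$ consisting of open balls of radius strictly less than $r_0/8$ (so diameter below $r_0/4$); then
\[
R = \bigcup_k A_k, \qquad A_k = \{\sigma \in S : \exists u \in V_k,\ \theta_n(\sigma,u) = €p\}.
\]

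The key step is to prove that each $A_k$ is nowhere dense. Toward a contradiction, assume $\overline{A_k}$ has non-empty interior $\mathcal{O} \subseteq S$, so $A_k$ is dense in $\mathcal{O}$. For each $\sigma \in A_k$ choose $u_\sigma \in V_k$ with $\theta_n(\sigma,u_\sigma) = €p$. Because $V_k$ has diameter less than $r_0/4$ and the type metric is bounded above by the metric of realisations in the same structure,
\[
d(\theta_n(\sigma,u),€p) \;=\; d\bigl(\theta_n(\sigma,u),\theta_n(\sigma,u_\sigma)\bigr) \;\leq\; \|u - u_\sigma\| \;<\; r_0/4
\]
for every $u \in V_k$ and every $\sigma \in A_k$. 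Fixing $u \in V_k$, the map $\sigma \mapsto \theta_n(\sigma,u)$ is continuous and $\overline{B}(€p,r_0/4)$ is logic-topologically closed by \Cref{closed balls}, so $\{\sigma : d(\theta_n(\sigma,u),€p) \leq r_0/4\}$ is closed in $S$; it contains $A_k$, hence also $\overline{A_k} \supseteq \mathcal{O}$. Therefore $\theta_n(\mathcal{O} \times V_k) \subseteq \overline{B}(€p, r_0/4)$, and $€p$ lies in this image, witnessed by any $\sigma \in A_k \cap \mathcal{O}$ (non-empty by density).

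Now invoke \Cref{open thickening} on the open set $\mathcal{O} \times V_k$: the thickening $W = B(\theta_n(\mathcal{O} \times V_k), r_0/4)$ is topologically open in $S_n(T_A)$. It contains $€p$, and by the triangle inequality $W \subseteq B(€p, r_0/2) \subseteq B(€p, r_0)$, placing $€p$ in the topological interior of $B(€p, r_0)$ and contradicting non-isolation. So each $A_k$ is nowhere dense, $R$ is meagre, and $S_0 = S \setminus R$ is comeagre.

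The delicate point, in my view, is the radius book-keeping: the diameter of $V_k$ must be chosen small enough (below $r_0/4$) so that the $r_0/4$-thickening delivered by \Cref{open thickening} still sits inside $B(€p,r_0)$. Everything else amounts to a density-plus-continuity argument together with the logic-topological closedness of metric balls from \Cref{closed balls}.
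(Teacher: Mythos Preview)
Your proof is correct. It differs from the paper's in overall architecture, though the core contradiction is the same. The paper works in the product $S \times E^n$: it shows $C = \theta_n^{-1}\bigl(B[€p,r]\bigr)$ is closed with empty interior there (using the open-thickening lemma exactly as you do to rule out any open $U \subseteq C$), applies the Kuratowski--Ulam theorem to obtain a comeagre set of $\sigma$ for which the fibre $A_\sigma = \{u \in E^n : d(\type^\sigma(u),€p) < r\}$ is nowhere dense, and then observes that each $A_\sigma$ is actually \emph{open} in $E^n$ (since $u \mapsto \type^\sigma(u)$ is continuous for the type metric), so nowhere dense forces $A_\sigma = \varnothing$. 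You instead fix a countable basis of small balls $V_k$ in $E^n$ and argue directly in $S$ that each $A_k$ is nowhere dense, bypassing Kuratowski--Ulam altogether. Both arguments hinge on the identical pair of ingredients---\Cref{open thickening} and \Cref{closed balls}---and reach the same contradiction (an open thickening of the image sitting inside $B(€p,r_0)$). The paper's route is slightly slicker, trading your radius bookkeeping for one application of Kuratowski--Ulam plus the neat ``open and nowhere dense implies empty'' step; your route is more elementary and self-contained.
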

	\begin{proof}
		By \cite[Lemma~10.3]{claoc} there is a ball $B(€p,2r)$ around $p$ with empty interior. By \Cref{closed balls}, the closed ball $B[€p,r]$ is closed in the logic topology. This means that the preimage $C = θ_n^{-1} B[€p,r]$ is closed in $S×E^n$, by continuity of $θ_n$. We will show that $C$ has empty interior.
		Suppose we have an open subset $U$ of $C$. By \Cref{open thickening}, the thickening of $θ_n(U)$ by $r$ is open in the logic topology, but it is also included in the ball $B(€p,2r)$, which has empty interior. This means that $U=∅$ and thus $C$ has empty interior.
		
		The previous paragraph also shows that the set $A = θ_{n}^{-1}(B(€p,r)$ is nowhere dense in $S×E^n$. By Kuratowski--Ulam theorem, there exists a comeagre subset $S_0$ of $S$ such that, for all $σ∈S_0$ the set $A_{σ} = §{u∈E^n: (σ,u) ∈ A} = §{u∈E^n: \type^{σ}(u) ∈ B(€p,r)}$ is nowhere dense. 
		
		Consider now the map $t_{σ}\colon E^n \to S_n(T_A)$ defined by $t_{σ}(u) = θ_n(σ,u) = \type^{σ}(u)$, and notice that this is continuous with respect to the type metric, since $d(\type^{σ}u, \type^{σ}v) ⩽ d(u,v)$. We can now rewrite $A_{σ}$ as $t_{σ}^{-1}B(€p,r)$, showing that $A_{σ}$ is open in $E^n$, which implies that it is nowhere dense precisely when it is empty.	This means that for every $σ∈S_0$, the model $(E,σ)$ omits $€p$.	
	\end{proof}
	
	\begin{theo}
		No conjugacy class in $\aut^*(μ)$ is comeagre.
	\end{theo}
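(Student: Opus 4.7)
The plan is to derive a contradiction by combining the absence of non-trivial isolated $1$-types (\Cref{no isolated types}) with the comeagre omission result (\Cref{comeagre omission}). Suppose toward a contradiction that some conjugacy class $\class{\sigma_0}$ were comeagre in $\aut^*(\mu)$.

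First I would reduce to the case $\sigma_0 \in S$. Since $S$ is comeagre in $\aut^*(\mu)$ by \Cref{s is polish}, and $S$ is conjugation-invariant (conjugates of aperiodic transformations are aperiodic), the intersection $\class{\sigma_0} \cap S$ must be comeagre as well, hence nonempty. But conjugation-invariance of $S$ then forces $\class{\sigma_0} \subseteq S$, so $(E, \sigma_0)$ is a model of $T_A$.

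Next I would produce a witness type: pick any positive $u \in E$ with $\norm{u} = 1$ and set $\mathfrak{p} = \type^{\sigma_0}(u) \in S_1(T_A)$. Since $\norm{x}^{\mathfrak p} = 1$, this is a non-trivial $1$-type, so by \Cref{no isolated types} it is not isolated. Applying \Cref{comeagre omission} yields a comeagre subset $S_0 \subseteq S$ such that $(E, \sigma)$ omits $\mathfrak{p}$ for every $\sigma \in S_0$. Since $S$ is comeagre in $\aut^*(\mu)$, $S_0$ is comeagre in $\aut^*(\mu)$ too.

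Finally I would obtain the contradiction from $\class{\sigma_0} \cap S_0 \neq \emptyset$. Pick $\sigma$ in this intersection and write $\sigma = f \sigma_0 f^{-1}$ for some $f \in \aut^*(\mu)$, corresponding to a Banach lattice automorphism of $E$ via \Cref{representation aut separable case}. By \Cref{type of conj}, $\type^{\sigma}(fu) = \type^{\sigma_0}(u) = \mathfrak{p}$, so $(E, \sigma)$ realises $\mathfrak{p}$, contradicting $\sigma \in S_0$. There is no hard calculation here; the substance has already been absorbed into \Cref{no isolated types} and \Cref{comeagre omission}, and the only mild subtlety is observing that any comeagre conjugacy class is automatically contained in $S$, which lets us apply the omission lemma to the specific $\sigma_0$ under consideration.
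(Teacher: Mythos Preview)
Your proof is correct and follows essentially the same argument as the paper: reduce to $\sigma_0\in S$ via conjugation-invariance and comeagreness of $S$, pick a non-trivial $1$-type realised in $(E,\sigma_0)$, invoke \Cref{no isolated types} and \Cref{comeagre omission}, and intersect the resulting comeagre omission set with the conjugacy class to reach a contradiction via \Cref{type of conj}. The only cosmetic difference is that the paper intersects inside $S$ (noting $\class{\sigma_0}$ is comeagre in the Polish space $S$) whereas you push $S_0$ up to be comeagre in $\aut^*(\mu)$ before intersecting; both are equally valid.
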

	\begin{proof}
		Suppose on the contrary that there is some $σ∈G = \aut^*(μ)$ such that $\class{σ}$ is comeagre in $G$. 
		As $S$ is comeagre in $G$, the intersection $\class{σ} ∩ S$ is also comeagre in $G$, and thus non-empty. Since $S$ is invariant under conjugation, we deduce that $\class{σ} ⊆ S$. By \Cref{s is polish}, $S$ is Polish, so $\class{σ}$ is in fact comeagre in $S$.
		
		Let $u$ be a non-zero element of $E$ and consider its type $€p(x) = \type^{σ}(u)$ in $(E,σ)$. As $€p$ is a non-trivial $1$-type, it is not isolated, by \Cref{no isolated types}.
		It follows from \Cref{comeagre omission} that there are comeagrely many $τ∈S$ such that the model $(E,τ)$ omits $€p$. Given that the intersection of comeagre sets is non-empty, there exists a conjugate $fσf^{-1}$ of $σ$ such that no $u∈E^{\card{x}}$ satisfying $\type^{fσf^{-1}}(u) = €p$. But $\type^{fσf^{-1}}(u) = \type ^{σ}(f^{-1}u)$ by \Cref{type of conj}, hence $(E,σ)$ omits $€p$, a contradiction.
	\end{proof}
	
	\begin{corol}
		Every conjugacy class in $\aut^*(μ)$ is meagre.
	\end{corol}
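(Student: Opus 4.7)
The plan is to upgrade the preceding theorem, which only excludes a comeagre conjugacy class, to the statement that every conjugacy class is meagre. The bridge is a topological zero-one law for conjugacy classes in $G = \aut^*(μ)$: I would show that every conjugacy class is either meagre or comeagre, at which point the theorem rules out the second alternative and the corollary follows immediately.

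First, I would remark that every conjugacy class $[g]$ is analytic in $G$, being the continuous image of $G$ under the map $f \mapsto fgf^{-1}$, and therefore enjoys the Baire property. Next I would split on whether $g$ is aperiodic. If $g \notin S$, then since aperiodicity is preserved under conjugation, the whole class $[g]$ lies in $G \setminus S$; by \Cref{s is polish} the set $S$ is comeagre, so $G \setminus S$, and a fortiori $[g]$, is meagre. If instead $g \in S$, then by \Cref{conjugacy lemma} the conjugacy class $[g]$ is dense in $G$, so $\overline{[g]} = G$. I would then invoke Effros' theorem applied to the continuous action of the Polish group $G$ on itself by conjugation: the orbit $[g]$ has the Baire property, and so it is either meagre or comeagre in its closure, which in this case is all of $G$.

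Putting the two cases together, every conjugacy class is meagre or comeagre, so, by the preceding theorem, every conjugacy class is meagre. The main delicate point will be the clean invocation of Effros' theorem (equivalently, the topological zero-one law for Polish group actions on Polish spaces): I need to verify that, for the continuous conjugation action of the Polish group $G$ on itself, the Baire property of the orbit together with density of $[g]$ really do force the dichotomy, and in particular that meagreness transfers from the closure of the orbit back to the orbit itself. Once that is in place, the corollary is a one-line consequence of the theorem.
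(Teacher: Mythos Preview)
Your proposal is correct and matches the paper's own proof essentially line for line: both split on whether the class lies in $S$ or in its complement, handle the latter using comeagreness of $S$, and for the former combine density (\Cref{conjugacy lemma}) with Effros' dichotomy and the preceding theorem. Your worry about ``meagreness transferring from the closure back to the orbit'' is unnecessary here, since the closure is already all of $G$, so meagre in the closure is literally meagre in $G$.
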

	\begin{proof}
		By invariance of $S$ under conjugation, a class $[σ]$ is either included in $S$ or in its complement. In the first case, it is dense, so the Effros theorem and the previous result imply that it is meagre. In the second case, it is meagre because $S$ is comeagre.
	\end{proof}

	\medskip
	\nocite{hodges}
	\printbibliography
\end{document}